\numberwithin{equation}{section}
\DeclareSymbolFont{SY}{U}{psy}{m}{n}
\DeclareMathSymbol{\emptyset}{\mathord}{SY}{'306}
\theoremstyle{plain}
\newtheorem*{mainT}{Theorem}
\newtheorem{thm}{Theorem}[section]
\newtheorem{cor}[thm]{Corollary}
\newtheorem{lem}[thm]{Lemma}
\newtheorem{prop}[thm]{Proposition}
\theoremstyle{definition}
\newtheorem{defn}[thm]{Definition}
\newtheorem{rem}[thm]{Remark}
\newcommand{\overbar}[1]{\mkern 1.5mu\overline{\mkern-1.5mu#1\mkern-1.5mu}\mkern 1.5mu}
\newenvironment{psmallmatrix}
 {\left(\begin{smallmatrix}}
 {\end{smallmatrix}\right)}
\newcounter{defcounter}
\title{Homogeneous $2$-shifts}
\author{Somnath Hazra}
\address{Department of Mathematics\\Indian Institutte of Science\\Bangalore 560012}
\email{somnathhazra@iisc.ac.in}
\keywords{Homogeneous operators, Projective representations} 
\subjclass[2010]{Primary 47B37, secondary 20C25}
\thanks{The results in this paper are taken form the PhD thesis of the author submitted to the Indian Institute of Science in the year 2017. This work 
is supported by the Council of Scientific and Industrial Research (CSIR), India.}
\begin{document}

\begin{abstract}

The classification of homogeneous scalar weighted shifts is known. Recently, Kor\'{a}nyi obtained a large class of inequivalent irreducible homogeneous bi-lateral $2$-by-$2$ block shifts. In this paper, we construct two distinct classes of examples not in the list of Kor\'{a}nyi. It is then shown that these new examples of irreducible homogeneous bi-lateral $2$-by-$2$ block shifts, together with the ones found earlier by Kor\'{a}nyi, account for every unitarily inequivalent irreducible homogeneous bi-lateral $2$-by-$2$ block shift.
\end{abstract}

\maketitle
\section{Introduction}
Let M\"{o}b denote the M\"{o}bius group of all biholomorphic automorphisms $\phi$ of the unit disc $\mathbb{D}:=\{z\in \mathbb C: |z| < 1\}$. These are of the form $\phi(z) = e^{i\theta} \tfrac{z-a}{1-\bar{a}z},$ $\theta \in \mathbb R,\,a\in \mathbb D.$   
\begin{defn}
A bounded linear operator $T$ on a complex separable Hilbert space $H$ is said to be homogeneous if the spectrum of $T$ is contained in $\overline{\mathbb{D}},$ the closed unit disc and $\phi(T)$ is unitarily equivalent to $T$ for every $\phi$ in M\"{o}b. 
\end{defn}
These assumptions on an operator $T$ and the Hilbert space $H,$ namely that the operator is linear and bounded, the Hilbert space is complex and separable will be in force throughout this paper. 

If $T$ is an irreducible homogeneous operator, then there exists a unique (up to equivalence) projective representation $\pi$ of M\"{o}b such that $\phi(T) = \pi(\phi)^* T \pi(\phi),\,\,\phi \in \mbox{M\"{o}b}.$ It was shown in \cite{Homogshift} that any irreducible homogeneous operator must be a block shift. All homogeneous scalar shifts were listed in \cite{Homogshift}. Kor\'anyi discovered the first examples of irreducible homogeneous bi-lateral $2$-by-$2$ block shifts. All the unilateral $n$-by-$n$ block shifts in the Cowen-Douglas class have been listed in \cite{ClassCD}. 

The classification result in \cite{Homogshift} was obtained by identifying the associated representation $\pi$ of a fixed but arbitrary homogeneous scalar shift. In this paper, we adopt this technique to the case of homogeneous $2$-by-$2$ block shifts. The possibilities for the associated projective representation, in this case, are given in Section $3$. Picking any one of these representation, say $\pi$, we determine the set
$$\{T : \phi(T) = \pi(\phi)^* T \pi(\phi),\,\,\phi \in \mbox{M\"ob}\}.$$
This is achieved by dividing the list of projective representation associated with $T$ according to the number of irreducible components in it. There are three such possibilities which are given in Theorem \ref{description of the rep for a 2 shift}. We show that the operator $T$ for which the associated representation is either a direct sum of three or four irreducible representations as described in the second and the third case of the Theorem \ref{description of the rep for a 2 shift}, is reducible.

In the remaining case, where the associated representation is a direct sum of two irreducible Continuous series representations, some of the operators $T$ that occur were already discovered by Kor\'anyi. We complete the list by identifying the remaining irreducible homogeneous bi-lateral $2$-by-$2$ block shifts. The main Theorem of this paper is stated at the very end of the paper.

\section{Preliminaries}
The definition of homogeneous operator while ensuring the existence of  a unitary operator $U_{\phi}$ intertwining $\phi(T)$ with $T$ does not impose any additional condition on the map $\phi \mapsto U_{\phi}$. To investigate some of these properties, we recall some basic notions from representation theory of locally compact second countable (lcsc) groups, in particular, the M\"{o}bius group. 
Most of what follows is from \cite{Homogshift, Homogsurvey}.
\begin{defn}
Let $G$ be a locally compact second countable group, $H$ be a Hilbert space and $\mathcal{U}(H)$ be the group of unitary operators on $H$. A Borel function $\pi : G \to \mathcal{U}(H)$ is said to be a projective unitary representation of $G$ on the Hilbert space $H$, if 
$$\pi(1) = I,\,\,\,\pi(gh) = m(g, h) \pi(g) \pi(h);\,\,g, h \in G,$$
where $m : G \times G \to \mathbb{T}$ is a Borel function. (In this paper, a representation or a projective representation will always mean a projective unitary representation.)
\end{defn}
The function $m$ associated with a projective representation $\pi$ is called the \emph{multiplier} of $\pi$ and satisfies the equations
$$m(g, 1) = m(1, g) = 1,\,\,m(g_1, g_2) m(g_1 g_2, g_3) = m(g_1, g_2 g_3) m(g_2, g_3)$$
for all $g, g_1, g_2$ and $g_3$ in $G$. Two multipliers $m$ and $\tilde{m}$ are said to be equivalent if there is a Borel function $f : G \to \mathbb{T}$ such that $m(g, h) = \frac{f(gh)}{f(g)f(h)} \tilde{m}(g, h),\,\,g, h \in G.$

Let $\pi_1$ and $\pi_2$ be two projective representations of $G$ on Hilbert spaces $H_1$ and $H_2,$ respectively. The representations $\pi_1$ and $\pi_2$ are called equivalent if there exists a unitary operator $U : H_1 \to H_2$ and a Borel function $f : G \to \mathbb{T}$ such that
$$\pi_1(g) = f(g) U^* \pi_2(g) U$$
holds for all $g$ in $G$.

\begin{defn}
Let $T$ be a homogeneous operator on a Hilbert space $H$. If there is a projective representation $\pi$ of M\"{o}b on $H$ with the property 
$$\phi(T) = \pi(\phi)^* T \pi(\phi),\, \phi \in \mbox{M\"{o}b},$$
then $\pi$ is said to be the representation associated with the operator $T.$
\end{defn}
A homogeneous operator need not possess an associated representation. 
However, \cite[Theorem 2.2]{Homogshift} says that for every irreducible homogeneous operator, there exists a unique (upto equivalence) projective representation associated with it.

We fix some notation and terminology that will be used throughout this paper. For any projective representation $\pi$ of M\"{o}b, let $\pi^{\#}$ be the representation of M\"{o}b defined by $\pi^{\#}(\phi) = \pi(\phi^*)$ where $\phi^*(z) = \overbar{\phi(\bar{z})},\,z \in \mathbb{D},$ for every $\phi$ in M\"{o}b. 

\begin{prop}\cite[Proposition 2.1]{Homogshift}\label{proposition 1}
Suppose $T$ is a homogeneous operator and $\pi$ is an associated representation of $T$. Then the adjoint, $T^*,$ is also homogeneous and $\pi^{\#}$ is an associated representation of $T^*.$ If $T$ is invertible, then $T^{-1}$ is also homogeneous and $\pi^{\#}$ is an associated representation of $T^{-1}.$ In particular $T$ and ${T^*}^{-1}$ have the same associated representation.
\end{prop}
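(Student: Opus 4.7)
The plan is to establish the three assertions in sequence, each by a short algebraic manipulation of the defining relation $\phi(T) = \pi(\phi)^* T \pi(\phi)$ combined with elementary functional-calculus identities. The key tool throughout is the involution $\phi \mapsto \phi^*$ on \mbox{M\"ob}, where $\phi^*(z) = \overline{\phi(\bar z)}$. A direct check using $\phi(z) = e^{i\theta}(z-a)/(1-\bar a z)$ shows this is a continuous group automorphism of \mbox{M\"ob}, so $\pi^{\#}(\phi) = \pi(\phi^*)$ is automatically a projective representation (with multiplier $(\phi,\psi) \mapsto m(\phi^*,\psi^*)$), and the identity $\phi^{**} = \phi$ will let me swap between $\phi$ and $\phi^*$ freely.

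For the adjoint, I would take adjoints of the intertwining relation and use the functional-calculus identity $\phi(T)^* = \phi^*(T^*)$, which is obvious for polynomials and extends to $\phi \in \mbox{M\"ob}$ either by a power-series expansion of $(I-\bar a T)^{-1}$ or by the standard argument that conjugation commutes with the Riesz functional calculus. This yields $\phi^*(T^*) = \pi(\phi)^* T^* \pi(\phi)$; relabelling $\phi \mapsto \phi^*$ and using $\phi^{**} = \phi$ gives $\phi(T^*) = \pi^{\#}(\phi)^* T^* \pi^{\#}(\phi)$, while $\sigma(T^*) = \overline{\sigma(T)} \subseteq \overline{\mathbb D}$ handles the spectral condition.

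For the inverse, I first need to pin down $\sigma(T)$. Because $\phi(T)$ is unitarily equivalent to $T$ and the spectral mapping theorem gives $\phi(\sigma(T)) = \sigma(T)$ for every $\phi$ in \mbox{M\"ob}, the spectrum $\sigma(T)$ is a nonempty closed \mbox{M\"ob}-invariant subset of $\overline{\mathbb D}$; since the only such sets are $\mathbb T$ and $\overline{\mathbb D}$, and invertibility excludes the latter, $\sigma(T) = \mathbb T$ and hence $\sigma(T^{-1}) \subseteq \overline{\mathbb D}$. Taking inverses of the defining relation then gives $\phi(T)^{-1} = \pi(\phi)^* T^{-1} \pi(\phi)$, and the crux is the operator identity $\phi(T)^{-1} = \phi^*(T^{-1})$. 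I would first verify the pointwise identity $1/\phi(z) = \phi^*(1/z)$ on $\mathbb T$, and then promote it using $(I-\bar a T^{-1})^{-1} = (T-\bar a I)^{-1}T$ and the fact that $(T-aI)^{-1}$ commutes with $T$; both sides should collapse to $e^{-i\theta}(I-\bar a T)(T-aI)^{-1}$. Relabelling $\phi \mapsto \phi^*$ once more produces the desired $\phi(T^{-1}) = \pi^{\#}(\phi)^* T^{-1} \pi^{\#}(\phi)$.

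The final sentence is then immediate: applying the adjoint result to $T$ gives $\pi^{\#}$ associated with $T^*$, and applying the inverse result to $T^*$ (which is invertible whenever $T$ is) produces $\pi^{\#\#}$ associated with $(T^*)^{-1}$; since $\pi^{\#\#}(\phi) = \pi(\phi^{**}) = \pi(\phi)$, both $T$ and $(T^*)^{-1}$ share $\pi$ as associated representation. The only genuinely delicate step I anticipate is the operator identity $\phi(T)^{-1} = \phi^*(T^{-1})$, but it is a short resolvent calculation once one notices the corresponding pointwise relation on $\mathbb T$.
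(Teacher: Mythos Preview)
The paper does not prove this proposition; it is quoted verbatim from \cite[Proposition~2.1]{Homogshift} and used as a black box, so there is no in-paper argument to compare against. Your proposal is nevertheless a correct proof. The two algebraic identities you isolate, $\phi(T)^* = \phi^*(T^*)$ and $\phi(T)^{-1} = \phi^*(T^{-1})$, are exactly the right ones, and your resolvent computation for the second goes through cleanly once you note that $T$ commutes with both $(I-\bar a T)$ and $(T-aI)^{-1}$. The spectral step (that a nonempty closed M\"ob-invariant subset of $\overline{\mathbb D}$ not containing $0$ must be $\mathbb T$) is also standard and correct. This is essentially the argument one finds in the cited reference.
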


A complete list of irreducible projective representations of M\"{o}b is given in  \cite[List 3.1]{Homogshift}. 
The Complementary series representations and the Principal series representations are together called Continuous series representations.

A projective representation $\pi$ of M\"{o}b on a Hilbert space $H$, containing a dense subspace $\mathcal{M}$ consisting of functions on some set $X$, is called a multiplier representation if
$$\left( \pi(\phi^{-1}) f\right)(x) = c(\phi, x)(f \circ \phi)(x),\,\,\phi \in \mbox{M\"{o}b},\,\,f \in \mathcal{M},\,\,x \in X$$
where $c$ is a non-vanishing measurable function on $\mbox{M\"{o}b} \times X.$

\begin{thm}\cite[Theorem 2.3]{Homogshift} \label{theorem 2}
Suppose there is a multiplier representation $\pi$ of M\"{o}b on a Hilbert space $H$, containing a dense subspace $\mathcal{M}$ consisting of functions on some set $X$. Suppose the operator $T$ given on $\mathcal{M}$ by 
$$(Tf)(x) = xf(x),\,\,f \in \mathcal{M},\,\,x \in X$$
leaves $\mathcal{M}$ invariant and has a bounded extension to $H$. Then the extension of\, $T$ is homogeneous and $\pi$ is associated with $T$.
\end{thm}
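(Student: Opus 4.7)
The plan is to verify the intertwining $T\pi(\phi) = \pi(\phi)\,\phi(T)$ on the invariant dense subspace $\mathcal{M}$ by one direct pointwise computation exploiting the multiplier formula, then extend it to all of $H$ by continuity, and finally rearrange it using the unitarity of $\pi(\phi)$ to recover $\phi(T) = \pi(\phi)^{*}T\pi(\phi)$.

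Before the main calculation, I would check that $\phi(T)$ makes sense at all. The defining rule $(\pi(\phi^{-1})f)(x)=c(\phi,x)(f\circ\phi)(x)$ only returns a function on $X$ if $\phi(X)\subseteq X$ for every $\phi\in\mbox{M\"{o}b}$; thus $X$ is M\"{o}bius-invariant and hence lies in $\overline{\mathbb{D}}$. Consequently, multiplication by the coordinate has spectrum in $\overline{\mathbb{D}}$, so for $\phi(z)=e^{i\theta}(z-a)/(1-\bar{a}z)$ with $|a|<1$ the operator $I-\bar{a}T$ is invertible, $\phi(T)=e^{i\theta}(T-aI)(I-\bar{a}T)^{-1}$ is a bounded operator, and on $\mathcal{M}$ it acts by $(\phi(T)f)(x)=\phi(x)f(x)$.

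For the main computation, replacing $\phi$ by $\phi^{-1}$ in the multiplier formula gives $(\pi(\phi)f)(x)=c(\phi^{-1},x)\,f(\phi^{-1}(x))$. Then for $f\in\mathcal{M}$,
\begin{align*}
(T\pi(\phi)f)(x) &= x\,c(\phi^{-1},x)\,f(\phi^{-1}(x)),\\
(\pi(\phi)\,\phi(T)f)(x) &= c(\phi^{-1},x)\,\phi(\phi^{-1}(x))\,f(\phi^{-1}(x)) = x\,c(\phi^{-1},x)\,f(\phi^{-1}(x)),
\end{align*}
so the two sides coincide. Density of $\mathcal{M}$ together with the boundedness of $T$, $\pi(\phi)$, and $\phi(T)$ upgrades this to $T\pi(\phi)=\pi(\phi)\,\phi(T)$ on $H$, which rearranges to $\phi(T)=\pi(\phi)^{-1}T\pi(\phi)=\pi(\phi)^{*}T\pi(\phi)$, proving that the extension of $T$ is homogeneous and that $\pi$ is associated with it. The only portion needing care is the preliminary inclusion $\sigma(T)\subseteq\overline{\mathbb{D}}$, necessary for $\phi(T)$ to be defined; once that is in hand the rest is essentially a one-line check on $\mathcal{M}$, and I anticipate no substantive obstacle.
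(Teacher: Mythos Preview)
The paper does not prove this theorem; it is quoted verbatim from \cite[Theorem~2.3]{Homogshift} and invoked only as a black box. So there is no ``paper's own proof'' to compare against. Your argument is the standard one and the core computation is exactly right: the multiplier formula gives $(\pi(\phi)\,M_\phi f)(x) = x\,(\pi(\phi)f)(x) = (T\pi(\phi)f)(x)$ on $\mathcal{M}$, and density plus boundedness pushes this to $H$.

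One step deserves more care. You write that $X$ is M\"{o}bius-invariant ``and hence lies in $\overline{\mathbb{D}}$.'' M\"{o}bius-invariance alone does not force this: the exterior $\{|z|>1\}\cup\{\infty\}$ is equally invariant under the group of disc automorphisms, so the implication as stated is false. In the source paper the relevant sets $X$ are explicitly $\mathbb{D}$ or $\mathbb{T}$, so the issue does not arise there, but in your abstract formulation you either need to add $X\subseteq\overline{\mathbb{D}}$ as a hypothesis or supply a separate argument that $\sigma(T)\subseteq\overline{\mathbb{D}}$. One clean route: your computation already yields $\pi(\phi_\theta)^{*}T\pi(\phi_\theta)=e^{i\theta}T$ for rotations, so $\sigma(T)$ is circularly symmetric; combining this with the analogous relation for a single non-rotational $\phi$ lets you show the spectrum is confined to $\overline{\mathbb{D}}$. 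With that patched, your proof is complete.
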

From the list of the irreducible projective representations of M\"{o}b (\cite[List 3.1]{Homogshift}), we see that every irreducible projective representation of M\"{o}b is a multiplier representation. Therefore Theorem \ref{theorem 2} says that if the multiplication by the coordinate function on the representation space of an irreducible projective representation of M\"{o}b is bounded, then it must be homogeneous.  Indeed this is true and  a complete list of homogeneous operators is given in \cite[List 4.1]{Homogshift}.

A bounded operator $T$ on a Hilbert space $H$ is said to be a shift if $H$ admits a direct sum decomposition of the form $\displaystyle \oplus_{i \in I} H_{i},$ where each $H_i$ is a closed subspace of $H$ and $T$ maps $H_{i}$ into $H_{i+1}$,  $i \in I.$  The operator $T$ is a bi-lateral, forward or backward shift according as $I$ equals $\mathbb Z,$ $\{n\in\mathbb Z: n\geq n_0\}$ or $\{n\in \mathbb Z: n\leq n_0\}.$
If there is a decomposition of the Hilbert space on which the  operator $T$ acts as a shift and if $T$ is irreducible, then this decomposition must be unique (see \cite[Lemma 2.2]{Homogshift}).
\begin{defn}
An irreducible operator $T$ is said to be an $n$-shift if  dim $H_{i} = n$, for all $i \in I$ except for finitely many of them. In the paper of Kor\'{a}nyi \cite{HomogKoranyi}, the $2$ shifts were called $2$-by-$2$ block shifts. 
\end{defn}

All irreducible homogeneous forward (and consequently backward) $n$-shifts are described  in \cite{Multfree}. First example of an irreducible homogeneous bilateral $2$-shift was given by Kor\'anyi in \cite{HomogKoranyi}. In \cite{HomogKoranyi}, a three parameter family of irreducible homogeneous bilateral $2$-shifts was constructed by Kor\'{a}nyi using \cite[Lemma 2.1]{HomogKoranyi},  
which also follows by combining  \cite[Theorem 5.3]{Homogsurvey} and \cite[Proposition 2.4]{CONSTCharGMBB}.

Recall from \cite[Lemma 2.1]{HomogKoranyi} that if $\pi(\phi)^* T_i \pi(\phi) = \phi(T_i),\,i=1,2,$ for some representation of M\"{o}b, then the operator $\begin{psmallmatrix} T_1 & \alpha (T_1 - T_2) \\ 0 & T_2 \end{psmallmatrix},\,\alpha > 0,$ is homogeneous. From the list of homogeneous weighted shifts (\cite[List 4.1]{Homogshift}), we see that for $0< a < b < 1,$ the bi-lateral shifts $T(a, b)$ and $T(b, a)$ with weights  $\sqrt{\tfrac{n+a}{n+b}}$ and $\sqrt{\tfrac{n+b}{n+a}},$ respectively, are homogeneous and the associated representation is the Complementary series  $\pi = C_{\lambda, \sigma},$ 
where $\lambda= a+b-1$ and $\sigma = (b-a)/2.$ Consequently, the operator $\begin{psmallmatrix} T(a, b) & \alpha (T(a, b) - T(b, a)) \\ 0 & T(b, a) \end{psmallmatrix},\,\,\alpha \in \mathbb{C},$ is homogeneous. In \cite{HomogKoranyi}, Kor\'anyi shows that the family
$$\mathcal C:=\left \lbrace T(a,b, \alpha) = 
                  \left[\begin{array}{ccc}
                    T(a,b) & \alpha (T(a,b) - T(b,a))\\
                      0 & T(b,a)
                  \end{array}\right] : 0 < a < b < 1, \alpha > 0 \right \rbrace$$
contains all irreducible homogeneous operators, modulo unitary equivalence, whose associated representation is $C_{\lambda,\sigma} \oplus C_{\lambda,\sigma}.$ 
In this paper, we describe all irreducible homogeneous $2$-shifts up to unitary equivalence completing the list of irreducible homogeneous $2$-shifts of Kor\'anyi.

\section{Representation associated with an irreducible homogeneous $2$-shift}

In this section, we describe the associated representation of an irreducible homogeneous $2$-shit. 
Let $\mathbb K$ be the maximal compact subgroup consisting of those elements of M\"{o}b which fix the point $0.$ Recall that a subspace 
$$V_{n}(\pi):=\{h:  \pi(k) h = k^{-n} h,\,\, k\in \mathbb K\}$$ 
of the representation space $H$ is said to be $\mathbb{K}$-isotypic.
Setting 
$I(\pi) = \{n \in \mathbb{Z} : \dim ~V_{n}(\pi) \neq 0 \},$ we note that the operator $T$ must be a shift from $V_{n}(\pi)$ to $V_{n+1}(\pi),$ $n \in I(\pi)$ by virtue of \cite[Theorem 5.1]{Homogshift}. The set $I(\pi)$ is said to be connected if for any three elements $a, b, c$ in $\mathbb{Z}$ with $a < b < c$ and $a, c \in I(\pi)$, then $b \in I(\pi)$ (see \cite[Definition 3.5]{Homogshift}).

\begin{thm}\label{decomposition of rep for a rank 2 shift}
Suppose $T$ is an irreducible $2$-shift  homogeneous operator. Then the associated representation $\pi$ is the direct sum of two or three or at most four irreducible representations. 
\end{thm}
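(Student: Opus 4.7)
The plan is to decompose the associated representation $\pi$ into a direct sum of irreducible projective representations of M\"{o}b and count the summands using the constraints imposed by the $\mathbb{K}$-isotypic decomposition together with the $2$-shift property.

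First, because $\dim V_{n}(\pi) \leq 2$ eventually (by the $2$-shift hypothesis) and remains finite at every $n$, the representation $\pi$ is admissible and therefore decomposes as a (necessarily finite) direct sum of irreducible projective representations,
\[
\pi \;=\; \bigoplus_{i=1}^{k} \pi_{i}.
\]
From \cite[List 3.1]{Homogshift}, each irreducible projective representation $\sigma$ of M\"{o}b satisfies $\dim V_{n}(\sigma) \leq 1$ for every $n \in \mathbb{Z}$, and its support $I(\sigma) = \{n \in \mathbb{Z} : V_{n}(\sigma) \neq 0\}$ falls into one of three mutually exclusive types: (i) $I(\sigma) = \mathbb{Z}$ if $\sigma$ is a Continuous (Principal or Complementary) series representation, (ii) $I(\sigma) = \{n \in \mathbb{Z} : n \geq n_{0}\}$ for some $n_{0}$ if $\sigma$ is in the holomorphic Discrete series, or (iii) $I(\sigma) = \{n \in \mathbb{Z} : n \leq n_{0}\}$ if $\sigma$ is in the anti-holomorphic Discrete series. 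Summing, we obtain $\dim V_{n}(\pi) = \#\{i : n \in I(\pi_{i})\}$.

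By \cite[Theorem 5.1]{Homogshift}, $T$ maps $V_{n}(\pi)$ into $V_{n+1}(\pi)$, and since $T$ is an irreducible bi-lateral $2$-shift, we have $\dim V_{n}(\pi) = 2$ for all $n \in \mathbb{Z}$ outside a finite set. Let $c$, $d^{+}$, and $d^{-}$ denote the numbers of Continuous, holomorphic Discrete, and anti-holomorphic Discrete summands among the $\pi_{i}$, respectively. Every Continuous and every holomorphic Discrete summand contributes exactly one dimension to $V_{n}(\pi)$ for $n$ sufficiently large and positive, which forces $c + d^{+} = 2$; symmetrically, $c + d^{-} = 2$. The only non-negative integer solutions are
\[
(c,\, d^{+},\, d^{-}) \in \{(2,0,0),\; (1,1,1),\; (0,2,2)\},
\]
giving $k = c + d^{+} + d^{-} \in \{2, 3, 4\}$, as desired.

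The main delicate point in carrying out this plan is establishing the discrete decomposability of $\pi$ (so that one has an honest direct sum rather than a direct integral of irreducibles); once this admissibility is in hand, the remainder is a straightforward counting argument, with the bi-lateral $2$-shift hypothesis controlling the asymptotic behavior of $\dim V_{n}(\pi)$ simultaneously at $n \to \pm \infty$ and thereby yielding the two equations $c + d^{\pm} = 2$ that cut out the three cases.
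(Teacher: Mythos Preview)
Your argument is correct and in fact proves both this theorem and the next one (Theorem~\ref{description of the rep for a 2 shift}) in one stroke, but it is organized differently from the paper's proof. The paper never invokes a general discrete-decomposability theorem for admissible representations. Instead it argues by successive binary splitting: a unitary representation that is reducible has a nontrivial closed invariant subspace, hence splits as an orthogonal direct sum of two nonzero subrepresentations. After four such splittings one has five nonzero summands, none assumed irreducible; the paper then applies \cite[Lemma~3.2]{Homogshift}, which says that for \emph{any} nontrivial subrepresentation the connected components of its $I$-set are unbounded, so each of the five summands contains a tail of~$\mathbb{Z}$. Pigeonhole on two tail types gives three summands sharing a tail, hence $\dim V_n(\pi)\ge 3$ there, a contradiction.

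The trade-off is this: your route is shorter and immediately identifies the three possible type patterns $(c,d^+,d^-)$, but it rests on the decomposition of $\pi$ into irreducibles, which you rightly flag as the delicate step and do not actually carry out. The paper's route is more elementary---it needs only orthogonal complements and the tail lemma, never the full classification of irreducibles nor any admissibility theorem---at the cost of deferring the type analysis to a separate theorem. Note, incidentally, that the tail lemma you would need to justify discrete decomposability is exactly \cite[Lemma~3.2]{Homogshift}: once you know every nonzero invariant subspace contributes to $V_n(\pi)$ for all large $n$ in some direction, the bound $\dim V_n(\pi)\le 2$ eventually forces at most four orthogonal summands, so the ``delicate point'' dissolves via the same mechanism the paper already uses. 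One small slip: you assume $T$ is bi-lateral when writing $c+d^{\pm}=2$, but the statement does not say this; the paper's pigeonhole argument works uniformly for forward, backward, and bi-lateral shifts.
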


\begin{proof} Let $T$ be an  irreducible homogeneous $2$ - shift  and $\pi$ be the associated representation.  

Since the $\mathbb{K}$-isotypic subspace of an irreducible projective representation is one dimensional (cf. \cite[Theorem 5.1]{Homogshift}), it follows that  $\pi$ cannot be irreducible.  

Thus we may assume without loss of generality that $\pi$ is a direct sum of two non-trivial representations, say, $\pi_{00} \oplus \pi_{22}$. If both of them are irreducible, then we are done. 

If not, one of them, say, $\pi_{00}$ must be reducible. Then $\pi_{00}$ is the  direct sum of two non-trivial representations, namely,  $\pi_{00} = \pi_{01} \oplus \pi_{21}$.   Hence $\pi = \pi_{01} \oplus \pi_{21} \oplus \pi_{22}$. 
If all of them are irreducible, then we are done. 

If not, one of them, say $\pi_{01}$, is reducible. Then $\pi_{01}$ is the
direct sum of two non-trivial representations, namely, $\pi_{01} = \pi_{11} \oplus \pi_{12}$. Then 
$$\pi = \pi_{11} \oplus \pi_{12} \oplus \pi_{21} \oplus \pi_{22}.$$ 

Now, we claim that  each summand in $\pi$ must be irreducible. If not, then one of them, say, $\pi_{11}$ is reducible. Then $\pi_{11} = \sigma \oplus \rho,$ where $\sigma$ and $\rho$ are non-trivial representations. Therefore, we have the decomposition

$$\pi = \sigma \oplus \rho \oplus \pi_{12} \oplus \pi_{21} \oplus \pi_{22}.$$ 

Now \cite[Lemma 3.2]{Homogshift} says that connected component of each $I(\sigma),$  $I(\rho),$  $I(\pi_{12}),$  $I(\pi_{21})$ and $I(\pi_{22})$ is unbounded. Therefore, each of $I(\sigma),$ $ I(\rho),$  $I(\pi_{12}),$ $I(\pi_{21})$ and $I(\pi_{22})$ contains a tail of $\mathbb{Z}$. This implies that one tail of $\mathbb{Z}$ must occur three times. Therefore,  $\dim V_{n}(\pi) \geq 3$ for all those $n$ in that tail of $\mathbb{Z}$ which occurs three times in $I(\pi).$ This contradicts the assumption that the operator $T$ is a $2$ - shift. Therefore each of $\pi_{11}, \pi_{12}, \pi_{21}$ and $\pi_{22}$ must be irreducible.
\end{proof}

The following theorem lists the possibilities of the associated representation for an irreducible homogeneous $2$-shift $T.$ 

\begin{thm}\label{description of the rep for a 2 shift}
If $T$ is an irreducible homogeneous $2$-shift and $\pi$ is the associated representation, then $\pi$ must be of the form
\begin{enumerate}
\item[]
$\boldsymbol{\pi = \displaystyle{\oplus_{i=1}^2} \pi_i :}$ In this case, the only possibilities for $\pi_1$ and $\pi_2$ are that they must be simultaneously one of the holomorphic Discrete series, anti-holomorphic Discrete series or Continuous series representations. 
\item[]
$\boldsymbol{\pi = \displaystyle\oplus_{i=1}^3 \pi_i :}$ In this case, one of the summands must be a Continuous series representation. Among the other two, one of them must be a holomorphic Discrete series and the other one an anti-holomorphic Discrete series representation.
\item[]
$\boldsymbol{\pi = \displaystyle \oplus_{i=1}^4 \pi_i :}$ In this case, two of the summands must be holomorphic Discrete series representations while the other two summands must be anti-holomorphic Discrete series representations simultaneously.
\end{enumerate}
\end{thm}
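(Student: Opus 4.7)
My plan is to combine the $\mathbb{K}$-isotypic structure of each irreducible summand of $\pi$, as read off from the list of irreducible projective representations of M\"ob in \cite[List 3.1]{Homogshift}, with the numerical constraint $\dim V_n(\pi) = 2$ for all but finitely many $n\in I(\pi)$ imposed by the $2$-shift property. The key input is that, for any irreducible projective representation $\sigma$ of M\"ob, each $\mathbb{K}$-isotypic subspace is at most one-dimensional, and the support $I(\sigma)$ has one of three shapes: $I(\sigma)=\{n\in\mathbb Z: n\geq n_0\}$ for a holomorphic Discrete series, $I(\sigma)=\{n\in\mathbb Z: n\leq n_0\}$ for an anti-holomorphic Discrete series, and $I(\sigma)=\mathbb Z$ for a Continuous series (Principal or Complementary) representation. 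Writing $\pi = \bigoplus_{i=1}^N \pi_i$, one has $\dim V_n(\pi)=\#\{i: n\in I(\pi_i)\}$, and this count must equal $2$ for all but finitely many $n$.

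I would then analyse the three cases $N=2,3,4$ furnished by Theorem \ref{decomposition of rep for a rank 2 shift} separately, using elementary counting on the positive and negative tails of $\mathbb Z$. For $N=2$, both $I(\pi_1)$ and $I(\pi_2)$ must contain every sufficiently large $|n|$ in $I(\pi)$; pairing summands whose supports are complementary at infinity, such as one holomorphic Discrete with one Continuous, or one holomorphic with one anti-holomorphic Discrete, leaves $\dim V_n(\pi) \leq 1$ on at least one tail, so the only survivors are the three simultaneous options stated in the theorem. For $N=3$, exactly two of the three supports must contain each large positive $n$ and each large negative $n$; two Continuous summands already contribute $2$ on both tails so a third summand pushes some tail to $\geq 3$; three Discrete summands of the same type overshoot on one tail and undershoot on the other; and mixed configurations such as two holomorphic plus one anti-holomorphic Discrete undershoot on the negative tail. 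The unique surviving combination is one Continuous, one holomorphic Discrete, and one anti-holomorphic Discrete. For $N=4$, any Continuous summand contributes $1$ on both tails and a short check shows that no distribution of the three remaining Discrete summands can then produce a tail count of exactly $2$ on each side; ruling out Continuous summands, balancing the two tails forces exactly two holomorphic and two anti-holomorphic Discrete summands.

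The main obstacle is the exhaustive book-keeping in the $N=3$ and $N=4$ cases: one must list every profile $(\#\text{hol.\ Disc.}, \#\text{antihol.\ Disc.}, \#\text{Cont.})$ of summands summing to $N$ and verify that it either over- or undershoots the target count $2$ on at least one tail or else coincides with the configuration stated in the theorem. The argument is entirely elementary once the tail shapes of $I(\sigma)$ are available, but every configuration has to be checked individually, and care is needed in the cases involving a Continuous series summand where the two tails might a priori line up with the correct multiplicity.
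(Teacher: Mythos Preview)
Your proposal is correct and takes essentially the same approach as the paper: both arguments count, for each of the two tails of $\mathbb Z$, how many of the irreducible summands $\pi_i$ have support $I(\pi_i)$ meeting that tail, and use that this count must equal $2$ to rule out all configurations except the ones listed. The paper's write-up is slightly more streamlined in that it argues each case by contradiction rather than by an explicit enumeration of profiles, but the underlying idea is identical.
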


\begin{proof}
Suppose $\pi$ is a direct sum of two irreducible representations, say, $\pi = \pi_{1} \oplus \pi_{2}$. If one of them is a holomorphic Discrete series representation then the other one also has to be a holomorphic Discrete series representation. Suppose not, then there is at least one tail $\mathcal I$ of $\mathbb{Z}$ such that the dimension of $V_{n}(\pi),$ $n\in \mathcal I,$ is one.  Similarly, if one of them is an anti-holomorphic Discrete series representation then the other one has to be an anti-holomorphic Discrete series representation. 
It follows that if one of these representations is from the Continuous series, then the other one cannot be either the holomorphic or the anti-holomorphic Discrete series representation. This completes the proof of the first case.   

If $\pi$ is a direct sum of three irreducible representations, then one of them must be from the Continuous series representations.  If not, all the three summands are from the Discrete series representations. In consequence, the existence of a tail $\mathcal{I}$ in $\mathbb Z$ such that 
dimension of $V_{n}(\pi),$ $n\in \mathcal I,$ is either one or three follows. This contradiction proves our claim. If one of the summands is a Continuous  
series representation, then the other two cannot be simultaneously holomorphic or anti-holomorphic Discrete series representations. If not, we find  a tail $\mathcal I$ in $\mathbb Z$ for which the dimension of $V_{n}(\pi),$ $n\in \mathcal I,$ is $3,$ which is a contradiction.

Now suppose $\pi$ is the direct sum of four irreducible representations, say $\pi_{1} \oplus \pi_{2} \oplus \pi_{3} \oplus \pi_{4}$. If one of them is a Continuous series representation, then there exists a tail of $\mathbb{Z}$ for which the dimension of $V_{n}(\pi)$ is greater than or equal to $3$. So, none of the representations $\pi_{i},$ $1\leq i \leq 4,$ are from the Continuous series representations.  Thus  each one of the representations $\pi_{1}, \pi_{2}, \pi_{3}, \pi_{4}$ must be from the Discrete series.   Now if three of them are either from the holomorphic or anti-holomorphic Discrete series representations, then the dimension of $V_{n}(\pi)$ must be greater than or equal to 3 for $n$ in some tail of $\mathbb Z$. Therefore if $\pi$ is a direct sum of four irreducible representation, then two of them have to be holomorphic Discrete series representations and the other two have to be anti-holomorphic Discrete series representations.
\end{proof}

Each of the three cases enumerated in  Theorem \ref{description of the rep for a 2 shift} is analysed in the following Sections. 

\section{The associated representation is the direct sum of two representations from the Continuous series }In this section, we find all the irreducible homogeneous operators for which the associated representation is a direct sum of two Continuous series   representations. This naturally splits into several cases. In the paper \cite{HomogKoranyi}, the case when the associated representation $\pi$ is the direct sum $C_{\lambda,\sigma} \oplus C_{\lambda,\sigma},$ is discussed. Here we begin with the case when $\pi= \pi_1 \oplus \pi_2,$ $\pi_1, \pi_2$  are form the Principal series. 

\subsection{$\boldsymbol{\pi = P_{\lambda,s} \oplus P_{\lambda,s}}$}
In this subsection, we find all the irreducible homogeneous operators for which the associated representation $\pi$ is of the form $P_{\lambda,s} \oplus P_{\lambda,s}.$  It is convenient to separate two cases, namely, the case of $s=0$ and that of $s\not=0.$

%

\subsubsection{\bf The case ``$\mathbf{s\not = 0}$'':}  In what follows, we assume $s\not = 0.$
Let $B(s)$  be the bounded linear transformation on $L^{2}(\mathbb{T})$ obtained by requiring that 
$$B(s)\, z^n =  \frac{n + \frac{1 + \lambda}{2} + s}{n + \frac{1 + \lambda}{2} - s} z^{n+1},\,\, n\in \mathbb Z.$$ 
Thus it is the weighted bilateral shift with weight sequence $\left\lbrace w_{n} = \frac{n + \frac{1 + \lambda}{2} + s}{n + \frac{1 + \lambda}{2} - s} \right\rbrace.$  Let $B$ be the multiplication by the coordinate function  on $L^{2}(\mathbb{T})$. The operator $B$ is the unweighted bi-lateral shift. 
Both the operators $B(s)$ and $B$ are known to be homogeneous \cite[Theorem 5.2]{Homogshift}.  Each of the Principal series representations may be taken to be the associated representation for both of these operators. 
\begin{prop}\label{Finding S}
Let $S$ be an operator on $L^{2}(\mathbb{T}).$ Suppose that for all $\phi$ in M\"{o}b, we have  
\begin{equation} \label{eqn:2.1}
S P_{\lambda, s}(\phi) - e^{i \theta} P_{\lambda, s}(\phi) S = \overline{a} B(s) P_{\lambda, s}(\phi) S + \overline{a} S P_{\lambda, s}(\phi) B.
\end{equation}
Then $S = \alpha \left( B(s) - B \right)$ for some $\alpha\in \mathbb C.$ 
\end{prop}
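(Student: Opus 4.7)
The plan is to exploit the functional equation \eqref{eqn:2.1} first along the rotation subgroup $\mathbb K$ of M\"ob in order to constrain the shape of $S$ with respect to the standard orthonormal basis $\{z^n\}_{n\in\mathbb Z}$ of $L^2(\mathbb T)$, and then along a transverse one-parameter subgroup to reduce the determination of the weights to a first-order scalar recursion that has $B(s)-B$ as an explicit particular solution.

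Restricting \eqref{eqn:2.1} to rotations $\phi(z)=e^{i\theta}z$ (so that $a=0$) kills the right-hand side and leaves the intertwining relation $SP_{\lambda,s}(\phi)=e^{i\theta}P_{\lambda,s}(\phi)S$. Since $P_{\lambda,s}$ restricted to $\mathbb K$ acts on each monomial by multiplication by an explicit unimodular scalar depending only on $n$ and $\theta$, writing $Sz^n=\sum_m s_{mn}z^m$ and matching powers of $e^{i\theta}$ on both sides forces $s_{mn}=0$ unless $m=n+1$. Hence $S$ is a bi-lateral weighted shift $Sz^n=t_n z^{n+1}$ for some scalar sequence $(t_n)_{n\in \mathbb Z}$.

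Next I would substitute a one-parameter family of non-rotations, the natural choice being $\phi_r(z)=(z-r)/(1-rz)$ for real $r$ near $0$, and differentiate \eqref{eqn:2.1} at $r=0$. Writing $X=dP_{\lambda,s}(\xi)$ for the Lie-algebra generator $\xi$ tangent to $\phi_r$ at the identity, the differentiated equation reads $[S,X]=B(s)S+SB$. The operator $X$ has an explicit tridiagonal action on the monomial basis with coefficients depending on $\lambda$ and $s$; substituting the ansatz $Sz^n=t_nz^{n+1}$ and reading off the coefficient of each $z^{n+k}$ in the image of $z^n$ converts $[S,X]=B(s)S+SB$ into a first-order linear scalar recursion for $(t_n)$. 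Since \eqref{eqn:2.1} is linear in $S$ and the Kor\'anyi construction recalled just before this subsection guarantees that every $S_\alpha=\alpha(B(s)-B)$ is a solution of \eqref{eqn:2.1}, the corresponding weight sequence $\alpha(w_n-1)$ is a one-parameter family of particular solutions of the recursion. Subtracting $S_\alpha$ for the value of $\alpha$ matching $t_0$ reduces the problem to showing that the only solution with $t_0=0$ is the zero sequence, which the recursion propagates in both directions.

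The principal technical obstacle is the bookkeeping that verifies the leading coefficient of the recursion is non-vanishing for every $n\in\mathbb Z$, so that $t_0=0$ really does force $t_n=0$ for all $n$. This is where the standing assumption $s\neq 0$ enters: it ensures that the weights $w_n=(n+\tfrac{1+\lambda}{2}+s)/(n+\tfrac{1+\lambda}{2}-s)$ of $B(s)$ never coincide with the constant weight $1$ of $B$, so that $B(s)-B$ is a bona fide nonzero weighted shift and the recursion is genuinely first-order. Once this non-vanishing is established, the one-parameter family $\{\alpha(B(s)-B):\alpha\in\mathbb C\}$ exhausts the solution set of \eqref{eqn:2.1}, as claimed.
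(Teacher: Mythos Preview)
Your approach is correct and matches the paper's: restrict to rotations to see that $S$ is a weighted shift, then extract the first-order information in $a$ (the paper phrases this as comparing the coefficient of $r$ in the matrix-coefficient expansion of $P_{\lambda,s}(\phi_a)$ rather than as a Lie-algebra derivative, but these are the same computation) to obtain the two-term recursion $\alpha_{n-1}(\lambda+2n-1-2s)=\alpha_n(\lambda+2n+1-2s)$, whose one-dimensional solution space is spanned by the weights of $B(s)-B$. One small tightening: the reason $s\neq0$ makes the recursion propagate in both directions is that the coefficients $n+\tfrac{1+\lambda}{2}\pm\tfrac12-s$ have nonzero imaginary part (since $s$ is purely imaginary and nonzero), not merely that $B(s)-B$ has nonzero weights.
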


\begin{proof}
Using homogeneity of $B(s)$ and $B$, it is easy to see that $\alpha (B(s) - B)$ satisfies \eqref{eqn:2.1} for all $\alpha \in \mathbb{C}$. We show that these operators are the only solutions of the equation \eqref{eqn:2.1}. 


For the proof, let  $S$  be any operator for which \eqref{eqn:2.1} holds. From the equation \eqref{basic eq: 3}, in the Appendix, it follows that $S$ is a weighted shift operator with respect to the orthonormal basis $\{z^{n}\}$ in $L^2(\mathbb T)$. Let $\{\alpha_{n}\}$ be the weight sequence of $S$. Now we find the value of $\alpha_{n}$. Now putting $m = n$ in the equation \eqref{basic eq: 4} and then comparing the coefficient of $r$, we get

\begin{equation*}
\alpha_{n-1} ( \lambda + 2n - 1 - 2s) = \alpha_{n}(\lambda + 2n + 1 - 2s) .
\end{equation*}
An easy induction argument shows that $\alpha_{n} = \alpha \left( \frac{n + \frac{1+ \lambda}{2} + s}{n + \frac{1+ \lambda}{2} - s} - 1 \right)$ for some $\alpha \in \mathbb{C}.$ This shows that $S = \alpha(B(s) - B)$ for some $\alpha \in \mathbb{C}$.

\end{proof}

\begin{cor}\label{homog S}
The operator $\left[ \begin{array}{ccc}
                    B(s) & \alpha (B(s) - B)\\
                      0 & B
                  \end{array}\right]$ is homogeneous with associate representation 
                  $P_{\lambda,s} \oplus P_{\lambda,s}.$
\end{cor}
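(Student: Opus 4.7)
The plan is to recognize that the corollary is essentially proved in passing inside Proposition \ref{Finding S}: the very first sentence of that proof notes that $\alpha(B(s) - B)$ satisfies the intertwining equation \eqref{eqn:2.1}. My strategy is to make explicit that \eqref{eqn:2.1} is nothing other than the $(1,2)$-block of the homogeneity condition $\pi(\phi)^{\ast} T\, \pi(\phi) = \phi(T)$ for $T = \begin{psmallmatrix} B(s) & \alpha(B(s)-B) \\ 0 & B \end{psmallmatrix}$ and $\pi = P_{\lambda,s}\oplus P_{\lambda,s}$, when $\phi$ ranges over a generating family of M\"ob.

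First I would parametrize a generic element of M\"ob as $\phi(z) = e^{i\theta}(z-a)/(1-\bar{a}z)$ and use the resulting expansion $\phi(T) = e^{i\theta}(T-a)(1-\bar{a}T)^{-1}$ together with the standard $2$-by-$2$ block form of $(1-\bar a T)^{-1}$. The diagonal entries of $\pi(\phi)^{\ast} T\pi(\phi)-\phi(T)$ vanish because $B(s)$ and $B$ are each homogeneous with associated representation $P_{\lambda,s}$ (cf.\ \cite[Theorem 5.2]{Homogshift}); the sub-diagonal entry is trivially zero. The remaining $(1,2)$ entry, after clearing the factors $(1-\bar a B(s))$ on the left and $(1-\bar a B)$ on the right, reads exactly as \eqref{eqn:2.1} with $S = \alpha(B(s)-B)$.

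Once this identification is made, the forward direction already observed inside the proof of Proposition \ref{Finding S} finishes the verification of $\pi(\phi)^{\ast}T\pi(\phi)=\phi(T)$ on a generating set, hence on all of M\"ob; the associated representation is then read off as $P_{\lambda,s}\oplus P_{\lambda,s}$. Alternatively, the corollary is a direct instance of Kor\'anyi's Lemma 2.1 applied with $T_1=B(s)$ and $T_2=B$, since both operators share the same associated representation $P_{\lambda,s}$. Either route is routine, and I do not expect any real obstacle: the only substantive computation was the derivation that $\alpha(B(s)-B)$ satisfies \eqref{eqn:2.1}, and that has been delegated to the preceding proposition.
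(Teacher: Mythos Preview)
Your proposal is correct, and the alternative you mention at the end---applying Kor\'anyi's Lemma~2.1 with $T_1=B(s)$ and $T_2=B$---is exactly the paper's one-line proof. Your more explicit first route simply unpacks that lemma in this particular instance by identifying \eqref{eqn:2.1} as the $(1,2)$-block of the rewritten homogeneity relation $e^{i\theta}\pi(\phi)(T-aI)=T\pi(\phi)(I-\bar a T)$; this is fine and makes the argument self-contained, though the remark about ``a generating set'' is unnecessary since \eqref{eqn:2.1} is already asserted for all $\phi\in\text{M\"ob}$.
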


\begin{proof}
The proof follows from \cite[Lemma 2.1]{HomogKoranyi}.
\end{proof}
It is evident that  $\left[ \begin{smallmatrix}
                    B(s) & \alpha (B(s) - B)\\
                      0 & B
                  \end{smallmatrix}\right]$ and $\left[ \begin{smallmatrix}
                    B(s) & \beta (B(s) - B)\\
                      0 & B
                  \end{smallmatrix}\right]$ are unitarily equivalent when $| \alpha | = | \beta |$.  A particular case of what is proved in \cite[Lemma 1.1]{HomogKoranyi} is that  $\left[ \begin{smallmatrix}
                    B(s) & \alpha (B(s) - B)\\
                      0 & B
                  \end{smallmatrix}\right]$ and $\left[ \begin{smallmatrix}
                    B & \alpha (B - B(s))\\
                      0 & B(s)
                  \end{smallmatrix}\right]$ are unitarily equivalent. We show that these are irreducible, which is very similar to the proof of \cite[Theorem 1.2]{HomogKoranyi}.

\begin{thm}\label{Irrducibility of operators corresponding to principal series rep}
For a fixed but arbitrary $\alpha > 0$, the operator  $T :=\left[ \begin{array}{ccc}
                    B(s) & \alpha (B(s) - B)\\
                      0 & B
                  \end{array}\right]$ is irreducible.
\end{thm}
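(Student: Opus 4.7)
The plan is to follow the Kor\'anyi strategy: let $P$ be a self-adjoint projection in the commutant of $T$ and show $P\in\{0,I\}$. Since $P=P^{*}$ commutes with $T$, it also commutes with $T^{*}$, hence with $T^{*}T$ and $TT^{*}$. The natural device is the $\mathbb{K}$-isotypic decomposition of the underlying Hilbert space: with $V_{n}=\mathbb{C}(z^{n},0)\oplus\mathbb{C}(0,z^{n})$, $n\in\mathbb{Z}$, one has $L^{2}(\mathbb{T})\oplus L^{2}(\mathbb{T})=\bigoplus_{n}V_{n}$, and $T$ acts as the $2\times 2$ shift $T_{n}=\begin{psmallmatrix} w_{n} & \alpha(w_{n}-1)\\ 0 & 1\end{psmallmatrix}:V_{n}\to V_{n+1}$, where $w_{n}=(n+(1+\lambda)/2+s)/(n+(1+\lambda)/2-s)$. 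A preliminary reduction is to assume $P$ preserves each $V_{n}$: this is achieved by averaging $k\mapsto\pi(k)P\pi(k)^{*}$ over $\mathbb{K}$; since $\pi(k)T\pi(k)^{*}$ is a unimodular scalar multiple of $T$, each conjugate $\pi(k)P\pi(k)^{*}$ lies in the commutant, and the spectral projections of the averaged positive operator commute with $\pi(\mathbb{K})$ and hence preserve every $V_{n}$.

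With $P$ preserving each $V_{n}$, the next step is the spectral analysis of the $2\times 2$ restriction $M_{n}:=T_{n}^{*}T_{n}=\begin{psmallmatrix} 1 & \alpha(1-\bar w_{n})\\ \bar\alpha(1-w_{n}) & 1+|\alpha|^{2}|w_{n}-1|^{2}\end{psmallmatrix}$. Its characteristic polynomial is $\mu^{2}-(2+\beta_{n})\mu+1$ with $\beta_{n}=|\alpha|^{2}|w_{n}-1|^{2}=4|\alpha|^{2}|s|^{2}/|n+(1+\lambda)/2-s|^{2}$. Since $s\neq 0$ and $\alpha>0$, one has $\beta_{n}>0$ for every $n$, so $M_{n}$ has two distinct eigenvalues $\mu_{n}^{\pm}$ with rank-one spectral projections $\Pi_{n}^{\pm}$. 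Because $P$ commutes with $T^{*}T$, the restriction $P|_{V_{n}}$ commutes with $M_{n}$, forcing $P|_{V_{n}}\in\{0,\,I,\,\Pi_{n}^{+},\,\Pi_{n}^{-}\}$.

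The commutation $PT=TP$ restricted to $V_{n}\to V_{n+1}$ gives $P|_{V_{n+1}}T_{n}=T_{n}P|_{V_{n}}$, and $T_{n}$ is invertible since $\det T_{n}=w_{n}\neq 0$. If $P|_{V_{n}}$ were a rank-one projection onto an eigen-line $\mathbb{C}v$ of $M_{n}$, then $\mathbb{C}T_{n}v$ would have to be an eigen-line of $M_{n+1}$. Normalising the eigenvector of $M_{n}$ for $\mu_{n}^{\pm}$ to have second coordinate $1$, a direct $2\times 2$ computation using $w_{n}-1=2s/(n+(1+\lambda)/2-s)$ reduces the alignment condition to an algebraic identity in the variables $(n,\lambda,s,\alpha)$ that fails for every $n$ whenever $s\neq 0$. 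Hence $P|_{V_{n}}\in\{0,I\}$ for each $n$, i.e.\ $P|_{V_{n}}=\lambda_{n}I_{V_{n}}$ with $\lambda_{n}\in\{0,1\}$, and then the relation $P|_{V_{n+1}}=T_{n}P|_{V_{n}}T_{n}^{-1}$ together with invertibility of $T_{n}$ forces $\lambda_{n+1}=\lambda_{n}$, propagating a single scalar across all $n$. Therefore $P\in\{0,I\}$ and $T$ is irreducible.

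The main obstacle is the transversality verification at the heart of the third paragraph: one must show that the explicit $2\times 2$ map $T_{n}$ carries neither eigen-line of $M_{n}$ to an eigen-line of $M_{n+1}$. This is the point where the precise form of $w_{n}$, and the hypothesis $s\neq 0$, are really used; the surrounding steps are formal bookkeeping. A secondary (minor) point is handling possible spectral coincidences $\beta_{n}=\beta_{m}$ for $n\neq m$ arising from symmetry of the formula for $\beta_{n}$ in $n\mapsto -n-(1+\lambda)$, but these are absorbed by the $\mathbb{K}$-averaging reduction in the first paragraph, which already guarantees that $P$ preserves each $V_{n}$ regardless of whether $T^{*}T$ has simple spectrum.
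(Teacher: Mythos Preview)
Your overall strategy matches the paper's: reduce to the $2$-dimensional blocks $V_n$, use that $P|_{V_n}$ commutes with $M_n=T_n^*T_n$, and rule out the rank-one possibilities via the non-alignment of the eigenbases of $T_n^*T_n$ and $T_{n-1}T_{n-1}^*$. The paper phrases this as $A_nB_n\neq B_nA_n$; your ``transversality'' condition is exactly that, and once established (the paper only asserts it for $n\ge 1$, which is enough after forward/backward propagation by the invertible $T_n$), the rest is routine.

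The genuine weak point is the $\mathbb K$-averaging reduction. Each $\pi(k)P\pi(k)^*$ does lie in the commutant, and the average $Q$ is block-diagonal, but nothing prevents $Q$ from being $cI$ with $0<c<1$: on a rank-one projection $P$ that straddles two $V_n$'s with coinciding spectra, the diagonal part is a genuine scalar. Then the spectral projections of $Q$ are $0$ and $I$, and you have extracted nothing. So the averaging does \emph{not} by itself absorb the spectral coincidences $\beta_n=\beta_m$ that occur when $\lambda\in\{0,1\}$. The paper handles this by an explicit case analysis, applying powers of $T$ to push an eigenspace-component of a vector in $\mathcal K$ into a region where the eigenvalues separate. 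A cleaner fix, available to you since you already note that $P$ commutes with both $T^*T$ and $TT^*$: the coincidence pattern for $T^*T$ is $n\leftrightarrow -n-1-\lambda$, while for $TT^*$ (whose $n$-th block has the eigenvalues of $M_{n-1}$) it is $n\leftrightarrow -n+1-\lambda$; these never agree for $n\neq m$, so the intertwining relations $P_{nm}M_m=M_nP_{nm}$ and $P_{nm}N_m=N_nP_{nm}$ already force $P_{nm}=0$. That gives block-diagonality directly and makes the averaging unnecessary.
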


\begin{proof}
Let $H(n)$ be the subspace of $L^{2}(\mathbb{T}) \oplus L^{2}(\mathbb{T})$  spanned  by the orthonormal set  
$$\mathcal{B}_{n} = \left\lbrace \left( \begin{array}{ccc}
                    z^{n}\\
                      0
                  \end{array}\right), \left( \begin{array}{ccc}
                    0\\
                   z^{n}
                  \end{array}\right) \right\rbrace.$$
Clearly, $T$ sends $H(n)$ to $H(n+1)$. Let $T_{n} := T_{|H(n)}.$ The matrix representations of $T_{n},\,T_n^*$ with respect to $\mathcal{B}_{n}$ and $\mathcal{B}_{n+1}$ are of the form 
$$\left[ \begin{array}{ccc}
                    w_{n} & \alpha (w_{n} - 1)\\
                      0 & 1
                  \end{array}\right]\,\, \mbox{\rm and}\,\, \left[ \begin{array}{ccc}
                    \bar{w}_{n} & 0\\
                     \alpha (\bar{w}_{n} - 1) & 1
                  \end{array}\right],$$  respectively.
The operators $A_{n} = T_{n}^{*}T_{n}$ and $B_{n} = T_{n-1}T_{n-1}^{*}$ map $H(n)$ to $H(n)$, their matrix representation with respect to the orthonormal basis $\mathcal{B}_{n}$ is easy to compute, namely, 
$$A_{n} = \left[ \begin{array}{ccc}
1 & \alpha (1 - \bar{w}_{n})\\
\alpha(1 - w_{n}) & 1 + \alpha^{2} | w_{n} - 1 |^{2}
\end{array}\right]\,\, \mbox{\rm and}\,\, B_{n} = \left[ \begin{array}{ccc}
       1 + \alpha^{2} | w_{n-1} - 1 |^{2} & \alpha (w_{n-1} - 1)\\
        \alpha (\bar{w}_{n-1} - 1) & 1
                                  \end{array}\right].$$ 
Since determinant of $A_{n}$ is $1$ and $A_{n}\not = I,$ it follows that the  eigenvalues of $A_{n}$ are of the form $\lambda_{n}^2, \frac{1}{\lambda^2_{n}}$ for some real number $\lambda_{n} > 1$. Consequently,  the trace of $A_{n}$ is $\lambda_{n}^{2} + \frac{1}{\lambda_{n}^{2}}.$ Thus $\lambda_{n}^{2} + \frac{1}{\lambda_{n}^{2}} = 2 + \alpha^2 | w_{n} - 1 |^{2}$ and therefore $\left( \lambda_{n} - \frac{1}{\lambda_{n}}\right)^{2} = \alpha^2 | w_{n} - 1 |^{2}$. 

Now suppose there exists $n , m$ such that $|w_{n} - 1|^{2} = |w_{m} - 1|^{2}$. Then putting the value of $w_{n}$ and $w_{m}$, we get $| n + \frac{1+\lambda}{2} - s|^{2} = | m + \frac{1+\lambda}{2} - s|^{2}$. Since $s = ia$, equivalently,  $\left(n + \frac{1+\lambda}{2}\right)^{2} + a^{2}  = \left(m + \frac{1+\lambda}{2}\right)^{2} + a^{2}$ and it follows that $n = m$ or $n+m+1+\lambda = 0$. Consequently, if $\lambda$ is not an integer, then $\lambda_{n} \neq \lambda_{m}$ for $n \neq m$. Since $-1 < \lambda \leq 1$, the possible integer values of $\lambda$ are either $0$ or $1$. If $\lambda = 0$ then $\lambda_{n} = \lambda_{-n-1}$ and if $\lambda = 1$, then $\lambda_{n} = \lambda_{-n-2}$. Note that $\lambda_{n} \neq \lambda_{m}$ if $n \neq m$ and $n, m \geq 0$. 
Let $\lambda_{n}^{(1)} = \lambda_{n}^{2},$ and $\lambda_{n}^{(2)} = \frac{1}{\lambda_{n}^{2}}.$ Pick an orthonormal basis $\{v_{n}^{(1)}, v_{n}^{(2)}\}$  of $H(n)$ which makes $A_{n}$ diagonal. Then $A_{n} v_{n}^{(i)} = \lambda_{n}^{(i)} v_{n}^{(i)}$. Let $u_{n}^{(i)} = T_{n-1}v_{n-1}^{(i)}$. Then $B_{n}u_{n}^{(i)} = T_{n-1}T_{n-1}^{*}T_{n-1}v_{n-1}^{(i)} = \lambda_{n-1}^{(i)} T_{n-1}v_{n-1}^{(i)} = \lambda_{n-1}^{(i)} u_{n}^{(i)}$. Also it is easily checked that $u_{n}^{(1)}$ and $u_{n}^{(2)}$ are orthogonal. So, $\{u_{n}^{(1)}, u_{n}^{(2)}\}$ is an orthogonal basis of $H(n)$ which makes $B_{n}$ diagonal.
                                  
Suppose $u_{n}^{(1)} = cv_{n}^{(1)}$ for $c\in \mathbb C$, then we  show that $u_{n}^{(2)} = dv_{n}^{(2)}$ for some $d\in \mathbb C$.  Find $d_1,d_2\in \mathbb C$ such that $u_{n}^{(2)} = d_{1} v_{n}^{(1)} + d_{2}v_{n}^{(2)}.$  Taking inner product of $v_{n}^{(1)}$ with $u_{n}^{(2)},$  we see that $d_{1} = 0$ using the equality $v_{n}^{(1)} = \frac{1}{c}u_{n}^{(1)}$ and the orthogonality of the two vectors $u_{n}^{(1)},$  $u_{n}^{(2)}.$ Thus we conclude that $u_{n}^{(2)}$ is a scalar multiple of $v_{n}^{(2)}$.

Similarly, we can show that if $u_{n}^{(1)}$ is a scalar multiple of $v_{n}^{(2)}$, then $u_{n}^{(2)}$ is a scalar multiple of $v_{n}^{(1)}$. This shows that if one of $\{v_{n}^{(1)}, v_{n}^{(2)}\}$ is a scalar multiple of one of $\{u_{n}^{(1)}, u_{n}^{(2)}\}$, then the same is true of the other one. 
If this  statement  is true for all $n,$ then we must have $A_{n}B_{n} - B_{n}A_{n} = 0$ for all $n$. But an easy computation  shows that $A_{n}B_{n} \neq B_{n}A_{n}$ for any $n \geq 1$.

Now let $\mathcal{K}$ be a reducing subspace of $T$. Then $\mathcal{K}$ is an invariant subspace of both $TT^{*}$ and $T^{*}T$ and therefore, for $f \in \mathcal{K}$, the projections of $f$ onto any eigenspaces of $TT^{*}$ and $T^{*}T$ are also in $\mathcal{K}$.

\begin{description}
\item[$\boldsymbol{\lambda \neq 0, 1}$]
Let  $\mathcal{A}_{n,i}$ be the space spanned  by the vector $v_{n}^{(i)}$. It is the eigenspace of $T^{*}T$ with eigenvalue $\lambda_{n}^{(i)}$. Then $L^{2}(\mathbb{T}) \oplus L^{2}(\mathbb{T}) = \displaystyle \oplus_{n \in \mathbb{Z}, i=1,2} \mathcal{A}_{n,i}$. Let $f \in \mathcal{K}$. Then $f = \displaystyle \sum _{n \in \mathbb{Z}, i = 1,2} \alpha_{n,i} v_{n}^{(i)}$. Since $f$ is non-zero, we can find $n, i$ such that $\alpha_{n,i} \neq 0$. Therefore, the vector $v_n^{(i)}$ is in $\mathcal K.$  This implies that $\mathcal{K} \cap H(n) \neq \emptyset$, for some $n \in \mathbb{Z}$.

\item[$\boldsymbol{\lambda = 0}$]
Let $\mathcal{A}_{n,i}$ be the space spanned  by the two vectors $v_{n}^{(i)}, v_{-n-1}^{(i)}.$ It is the eigenspace of $T^{*}T$ with  eigenvalue $\lambda_{n}^{(i)}$. Then $L^{2}(\mathbb{T}) \oplus L^{2}(\mathbb{T}) = \displaystyle \oplus_{n \geq 0, i=1,2} \mathcal{A}_{n,i}$. Now suppose $f \in \mathcal{K}$. Then 
$$f = \displaystyle \sum_{n \geq 0, i=1,2} \alpha_{n,i} h_{n,i},$$ 
where $h_{n,i}$ is in $\mathcal{A}_{n,i}$. Since $f \neq 0$, we can find $\alpha_{n,i} \neq 0$ for some $n, i$. Also there exist scalars $\gamma, \delta$ such that $h_{n,i} = \gamma v_{-n-1}^{(i)} + \delta v_{n}^{(i)}$. Since $v_{-n-1}^{(i)} \in H(-n-1)$ and $v_{n}^{(i)} \in H(n)$, applying $T^{n+2}$ we see that $T^{n+2} h_{n,i} = \tilde{\gamma} h_{1} + \tilde{\delta} h_{2n+2}$ for some $h_{1} \in H(1)$ and $h_{2n+2} \in H(2n+2)$. 
Therefore, there are  scalars $\gamma_{1}, \gamma_{2}, \delta_{1}, \delta_{2}$, such that $h_{1} = \gamma_{1}v_{1}^{(1)} + \gamma_{2}v_{1}^{(2)}$ and $h_{2n+2} = \delta_{1}v_{2n+2}^{(1)} + \delta_{2}v_{2n+2}^{(2)}$. So, 
$$T^{n+2}h_{n,i} = \tilde{\gamma}\gamma_{1} v_{1}^{(1)} + \tilde{\gamma}\gamma_{2} v_{1}^{(2)} + \tilde{\delta}\delta_{1}v_{2n+2}^{(1)} + \tilde{\delta}\delta_{2}v_{2n+2}^{(2)}.$$ 
Note that $v_{1}^{(1)} \in \mathcal{A}_{1,1}, v_{1}^{(2)} \in \mathcal{A}_{1,2}, v_{2n+2}^{(1)} \in \mathcal{A}_{2n+2,1}$ and $v_{2n+2}^{(2)} \in \mathcal{A}_{2n+2,2}.$ Each of these correspond to distinct eigenspaces of $T^*T$. Since $h_{n,i}$ is non-zero, so is $T^{n+2}h_{n,i}.$ Therefore one of the coefficients of this sum must be non zero. This implies that one of $v_{1}^{(1)}, v_{1}^{(2)}, v_{2n+2}^{(1)}$ or $v_{2n+2}^{(2)}$ is in $\mathcal{K}$.  It follows that $H(n) \cap \mathcal{K} \neq \emptyset$ for some $n.$

\item[$\boldsymbol{\lambda=1}$]
 A similar calculation as in the case of $\lambda=0$ ensures the existence of some  $n$ with $\mathcal{K} \cap H(n) \neq \emptyset$.
\end{description}

These three cases ensure the existence of an $n$ such that $\mathcal{K} \cap H(n) \neq \emptyset$. Since each $T_{n}$ is invertible, by applying $T^{k}$ for sufficiently large $k$ it follows that there exists $m > 0$ such that $\mathcal{K} \cap H(m) \neq \emptyset.$ Pick a non-zero element $h_{m}$ from $\mathcal{K} \cap H(m)$. Then $h_{m} = \alpha v_{m}^{(1)} + \beta v_{m}^{(2)} = \gamma u_{m}^{(1)} + \delta u_{m}^{(2)}$. We have already shown that $A_{m}B_{m} - B_{m}A_{m} \neq 0$, therefore either $\alpha \beta \neq 0$ or $\gamma \delta \neq 0$.   If $\alpha \beta \neq 0$, then $v_{m}^{(1)}, v_{m}^{(2)} \in \mathcal{K}$ since $v_{m}^{(1)}, v_{m}^{(2)}$ are in different eigenspaces of $T^{*}T.$ Similarly,  $u_{m}^{(1)}, u_{m}^{(2)} \in \mathcal{K}$ if $\gamma \delta \neq 0$. We conclude that  $H(m) \subseteq \mathcal{K}$. Now since $T_{n}$ is invertible for all $n$,  applying $T^{n}$ and ${T^{*}}^{n}$ on $H(m),$ we find that $H(k) \subseteq \mathcal{K}$ for all $k$. This implies that $\mathcal{K} = L^{2}(\mathbb{T}) \oplus L^{2}(\mathbb{T})$ completing the proof. 
\end{proof}

Let $B(\lambda, s, \alpha)$ denote the operator $\left[\begin{array}{ccc}
                    B(s) & \alpha (B(s) - B)\\
                      0 & B
                  \end{array}\right].$ Now we show that the unitary equivalence class of $B(\lambda, s, \alpha)$ depends only on $\lambda, |a|,$ (where $s = ia$) and $| \alpha |.$

\begin{thm}\label{inequivalance within principal series rep}
The operators $B(\lambda_{1}, s_{1}, \alpha_{1})$ and $B(\lambda_{2}, s_{2}, \alpha_{2})$ are unitarily equivalent if and only if $\lambda_{1} = \lambda_{2},$ $a_{1} = a_{2}$ and $\alpha_{1} = \alpha_{2}$ for any choice of a pair of purely imaginary numbers $s_{1} = ia_{1}, s_{2} = ia_{2},$ $a_{1}, a_{2} > 0,$ and $\alpha_{1}, \alpha_{2}>0$.
\end{thm}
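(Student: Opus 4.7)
The ``if'' direction is immediate. For the converse, let $U$ be a unitary with $U^* B(\lambda_2, s_2, \alpha_2)\,U = B(\lambda_1, s_1, \alpha_1)$, and write $T_i = B(\lambda_i, s_i, \alpha_i)$. By Theorem \ref{Irrducibility of operators corresponding to principal series rep}, each $T_i$ is an irreducible $2$-shift, so \cite[Lemma 2.2]{Homogshift} guarantees that $L^2(\mathbb{T}) \oplus L^2(\mathbb{T}) = \oplus_n H(n)$ is the unique shift decomposition. Since $\{U\,H(n)\}$ is then also a shift decomposition for $T_2$ satisfying $T_2\bigl(U\,H(n)\bigr) = U\,H(n+1)$, the uniqueness forces $U\,H(n) = H(n+k)$ for a single $k \in \mathbb Z$ independent of $n$.

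The next step is to extract a scalar invariant attached to each index $n$. The matrix computation of $A_n^{(i)} := ((T_i)_n)^* (T_i)_n$ carried out already in the proof of Theorem \ref{Irrducibility of operators corresponding to principal series rep} gives $\det A_n^{(i)} = 1$ and $\tr A_n^{(i)} = 2 + \alpha_i^{2}\,|w_n^{(i)} - 1|^{2}$, where $w_n^{(i)} = \tfrac{n + (1+\lambda_i)/2 + s_i}{n + (1+\lambda_i)/2 - s_i}$. Specialising $s_i = i a_i$ produces the explicit closed form
$$g_i(n) \,:=\, \tr\!\bigl(A_n^{(i)} - I\bigr) \,=\, \frac{4\,\alpha_i^{2}\,a_i^{2}}{\bigl(n + \tfrac{1+\lambda_i}{2}\bigr)^{2} + a_i^{2}}.$$
Because $U$ carries $H(n)$ onto $H(n+k)$ and intertwines $T_1$ with $T_2$, it also intertwines $A_n^{(1)}$ with $A_{n+k}^{(2)}$, and therefore $g_1(n) = g_2(n+k)$ for every $n \in \mathbb Z$.

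Finally, the reciprocal $1/g_i(n)$ is a quadratic polynomial in $n$ whose three coefficients are explicit rational expressions in $\lambda_i, a_i, \alpha_i$. Interpreting $1/g_1(n) = 1/g_2(n+k)$ as a polynomial identity and matching coefficients: the leading coefficient forces $\alpha_1 a_1 = \alpha_2 a_2$; the coefficient of $n$ forces $\lambda_1 = \lambda_2 + 2k$, which together with $\lambda_1,\lambda_2 \in (-1,1]$ forces $k = 0$ and $\lambda_1 = \lambda_2$; the constant term then forces $a_1^{2} = a_2^{2}$, hence $a_1 = a_2$ by positivity, and consequently $\alpha_1 = \alpha_2$. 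The only delicate point in the plan is the opening reduction $U\,H(n) = H(n+k)$; once that has been justified via uniqueness of the shift decomposition, everything downstream is routine coefficient matching for a quadratic polynomial.
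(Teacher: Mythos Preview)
Your proof is correct and takes a genuinely different route from the paper's.

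The paper first invokes the uniqueness of the associated representation: since $T_i$ is irreducible homogeneous with associated representation $P_{\lambda_i,s_i}\oplus P_{\lambda_i,s_i}$, unitary equivalence of $T_1$ and $T_2$ forces equivalence of these representations, and a multiplier comparison then yields $\lambda_1=\lambda_2$. With $\lambda$ fixed, the paper uses only equality of the \emph{sets} $\mathcal S_1=\mathcal S_2$ of the quantities $\alpha_i^2|w_n^{(i)}-1|^2$, and recovers the remaining parameters by locating the maximum and second maximum of each set; this requires a three-way case split according to $\lambda<0$, $\lambda=0$, $\lambda>0$.

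Your argument bypasses both the representation-theoretic step and the case analysis. By appealing to uniqueness of the shift decomposition you upgrade the set equality to an \emph{indexed} equality $g_1(n)=g_2(n+k)$; since $1/g_i$ is quadratic in $n$, matching three coefficients gives all three parameter equalities at once, including $\lambda_1=\lambda_2$ (from $\lambda_1-\lambda_2=2k$ together with $\lambda_i\in(-1,1]$). This is cleaner and more uniform. The one point worth spelling out explicitly is why the bijection $\sigma$ with $U\,H(n)=H(\sigma(n))$ must be a translation: each block $(T_2)_m$ is invertible (its matrix is upper triangular with diagonal $w_m,1$), so $T_2\,H(\sigma(n))=H(\sigma(n)+1)$ while also $T_2\,U\,H(n)=U\,T_1\,H(n)=U\,H(n+1)=H(\sigma(n+1))$, forcing $\sigma(n+1)=\sigma(n)+1$.
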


\begin{proof} The operators $B(\lambda_{i}, s_{i}, \alpha_{i})$ are homogeneous with associated representation  $P_{\lambda_{i}, s_{i}} \oplus P_{\lambda_{i}, s_{i}}$ for $i = 1, 2,$  see Corollary \ref{homog S}.  If $\lambda_{1} \neq \lambda_{2}$ then the multipliers of $P_{\lambda_{1}, s_{1}} \oplus P_{\lambda_{1}, s_{1}}$ and $P_{\lambda_{2}, s_{2}} \oplus P_{\lambda_{2}, s_{2}}$ are inequivalent \cite[Corollary 3.2]{Homogshift}. 
Therefore $P_{\lambda_{1}, s_{1}} \oplus P_{\lambda_{1}, s_{1}}$ and $P_{\lambda_{2}, s_{2}} \oplus P_{\lambda_{2}, s_{2}}$ are inequivalent. Since the representation associated with an irreducible homogeneous operator is uniquely determined, it follows that $B(\lambda_{1}, s_{1}, \alpha_{1})$ and $B(\lambda_{2}, s_{2}, \alpha_{2})$ cannot be inequivalent and consequently, $\lambda_1 = \lambda_2.$ 

Now, setting $\lambda_{1} = \lambda_{2} = \lambda,$ we show that if $B(\lambda, s_{1}, \alpha_{1})$ and $B(\lambda, s_{2}, \alpha_{2})$ are equivalent, then $s_{1} = s_{2}$ and $\alpha_{1} = \alpha_{2}$. 

Since $B(\lambda, s_{1}, \alpha_{1})$ and $B(\lambda, s_{2}, \alpha_{2})$ are equivalent, it follows that the set of singular values of these two operators must be the same and consequently the two sets 
$$\mathcal S_1:=\left\lbrace \alpha_{1}^2 \frac{|2s_{1}|^{2}}{|n + \frac{1+\lambda}{2} - s_{1}|^{2}} : n \in \mathbb{Z} \right\rbrace\,\, \mbox{\rm and}\,\,  \mathcal S_2:=\left\lbrace \alpha_{2}^2 \frac{|2s_{2}|^{2}}{|n + \frac{1+\lambda}{2} - s_{2}|^{2}} : n \in \mathbb{Z} \right\rbrace$$ 
must be the same.
\begin{description}
\item[$\boldsymbol{\lambda < 0}$]
In this case the maximum  of the sets $\mathcal{S}_1$ and $\mathcal{S}_2,$ which is achieved at $n=0$ in both cases, 
must be equal, that is, 
\begin{equation}\label{eqn:2.1.2}
\frac{4\alpha_{1}^2 a_{1}^{2}}{(\frac{1+\lambda}{2})^{2} + a_{1}^{2}} = \frac{4\alpha_{2}^2 a_{2}^{2}}{(\frac{1+\lambda}{2})^{2} + a_{2}^{2}}.
\end{equation}
Removing this maximum from both $\mathcal{S}_1$ and $\mathcal{S}_2,$ again, the maximum in each of them is achieved at $n=-1$  and they must be equal, that is, 
\begin{equation}\label{eqn:2.1.3}
\frac{4\alpha_{1}^2 a_{1}^{2}}{(-1 + \frac{1+\lambda}{2})^{2} + a_{1}^{2}} = \frac{4\alpha_{2}^2 a_{2}^{2}}{(-1 + \frac{1+\lambda}{2})^{2} + a_{2}^{2}}.
\end{equation}
Combining equations \eqref{eqn:2.1.2} and \eqref{eqn:2.1.3}, we obtain the equation
$\alpha_{1}^2 a_{1}^{2} =  \alpha_{2}^2 a_{2}^{2}.$
Using this relationship in \eqref{eqn:2.1.2}, we find that $a_{1}^{2} = a_{2}^{2}$. Since both $a_{1}$ and $a_{2}$ are positive, it follows that $a_{1} = a_{2}.$ Therefore $\alpha_{1} = \alpha_{2}$.

\item[$\boldsymbol{\lambda = 0}$] As before, in this case, the  maximum and the second maximum value of $\mathcal{S}_1$ and $\mathcal S_2$ are achieved at  $n = 0$ and $n=1,$ respectively. So, equating these two values, we get  
\begin{equation*}
\frac{4\alpha_{1}^2 a_{1}^{2}}{\frac{1}{4}+ a_{1}^{2}} = \frac{4\alpha_{2}^2 a_{2}^{2}}{\frac{1}{4} + a_{2}^{2}}\,\, \,\,\mbox{\rm and}\,\,\,\,\frac{4\alpha_{1}^2 a_{1}^{2}}{\frac{9}{4} + a_{1}^{2}} = \frac{4\alpha_{2}^2 a_{2}^{2}}{\frac{9}{4} + a_{2}^{2}}.
\end{equation*}
A similar calculation, as in the case of $\lambda <0,$ implies that $a_{1} = a_{1}$ and $\alpha_{1} = \alpha_{2}$. 

\item[$\boldsymbol{\lambda > 0}$] One last time,  we note that the maximum and the second maximum of the two sets $\mathcal{S}_1$ and $\mathcal S_2$ are achieved at $n = -1$ and $n = 0,$ respectively. Equating these values, we obtain a pair of equations identical to the equations we had obtained  in the case of $\lambda < 0$. 
Therefore we conclude that $a_{1} = a_{2}$ and $\alpha_{1} = \alpha_{2}$.
\end{description}
Thus $B(\lambda_{1}, s_{1}, a_{1})$ and $B(\lambda_{2}, s_{2}, a_{2})$ are equivalent if and only if $\lambda_{1} = \lambda_{2}$, $a_{1} = a_{2}$ and $\alpha_{1} = \alpha_{2}$.
\end{proof}
If  $U_{\lambda, s} : L^{2}(\mathbb{T}) \rightarrow L^{2}(\mathbb{T})$ is the operator 
$U_{\lambda, s} z^{n} = \frac{\Gamma (n+\frac{1+\lambda}{2} - s)}{\Gamma (n+\frac{1+\lambda}{2} + s)} z^{n},$
then from \cite[p. 318]{Homogshift}, it follows that 
$U_{\lambda, s}$ is  unitary, $U_{\lambda, -s} = U_{\lambda, s}^{*},$   $P_{\lambda, -s}U_{\lambda, s} = U_{\lambda, s} P_{\lambda, s}$ and $B(s) = U_{\lambda, s}^{*} B U_{\lambda, s}$. 

Replacing $s$ by $-s,$ we see that $B(-s) = U_{\lambda, -s}^{*} B U_{\lambda, -s}.$ This is the same as $B(-s) = U_{\lambda, s} B U_{\lambda,s}^{*}$. Consequently,  $U_{\lambda, s} \oplus U_{\lambda, s}$ intertwines $\left[\begin{smallmatrix}
                    B(s) & \alpha (B(s) - B)\\
                      0 & B
                  \end{smallmatrix}\right]$ and $\left[\begin{smallmatrix}
                    B & \alpha (B - B(-s))\\
                      0 & B(-s)
                  \end{smallmatrix}\right].$ 
It follows, after conjugating with a permutation,  that $\left[\begin{smallmatrix}
                    B(s) & \alpha (B(s) - B)\\
                      0 & B
                  \end{smallmatrix}\right]$ and $\left[\begin{smallmatrix}
                    B(-s) & \alpha (B(-s) - B)\\
                      0 & B
                  \end{smallmatrix}\right]$ are unitarily equivalent.  
%
Hence $$\mathcal P:=\left \lbrace 
                  \left[ \begin{array}{ccc}
                    B(s) & \alpha (B(s) - B)\\
                      0 & B
                  \end{array}\right] : \lambda, s=ia, a > 0, \alpha > 0\right \rbrace$$ is a mutually unitarily inequivalent set of irreducible homogeneous operators with associated representation $P_{\lambda, s} \oplus P_{\lambda, s}$.
                  
The associated representation of the family of irreducible homogeneous operators 
$$\mathcal C:=\left \lbrace T(a,b, \alpha) = 
                  \left[\begin{array}{ccc}
                    T(a,b) & \alpha (T(a,b) - T(b,a))\\
                      0 & T(b,a)
                  \end{array}\right] : 0 < a < b < 1, \alpha > 0 \right \rbrace$$ 
                  is the direct sum of two copies of a Complementary series representation (see \cite{HomogKoranyi}). 
                  We now show that these  two sets of homogeneous operators are mutually unitarily inequivalent. 
                  
\begin{thm}\label{Inequivalance between complementary series and principal series}                                   
The homogeneous operators in the two sets $\mathcal P$ and $\mathcal C$  are mutually unitarily inequivalent.
\end{thm}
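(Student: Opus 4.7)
The plan is to exploit the uniqueness, up to equivalence, of the projective representation associated with an irreducible homogeneous operator. First I would record that each $B(\lambda,s,\alpha)\in\mathcal P$ is an irreducible homogeneous operator, by Theorem~\ref{Irrducibility of operators corresponding to principal series rep}, whose associated representation is $P_{\lambda,s}\oplus P_{\lambda,s}$, by Corollary~\ref{homog S}. On the other side, each $T(a,b,\alpha')\in\mathcal C$ is an irreducible homogeneous operator whose associated representation is $C_{\lambda',\sigma}\oplus C_{\lambda',\sigma}$, with $\lambda'=a+b-1$ and $\sigma=(b-a)/2$, by the results of Kor\'anyi in \cite{HomogKoranyi}.

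Next I would argue by contradiction. Assume that some $B(\lambda,s,\alpha)\in\mathcal P$ is unitarily equivalent to some $T(a,b,\alpha')\in\mathcal C$. Since the associated projective representation of any irreducible homogeneous operator is unique up to equivalence (\cite[Theorem~2.2]{Homogshift}), unitarily equivalent irreducible homogeneous operators must have equivalent associated representations. Hence
$$P_{\lambda,s}\oplus P_{\lambda,s} \;\simeq\; C_{\lambda',\sigma}\oplus C_{\lambda',\sigma}$$
as projective representations of M\"ob.

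The last step is to rule out this equivalence. By the uniqueness, up to permutation, of the decomposition of a projective representation into irreducible components, such an equivalence would force $P_{\lambda,s}\simeq C_{\lambda',\sigma}$ as irreducible projective representations of M\"ob. But in the classification given in \cite[List~3.1]{Homogshift}, the Principal series $\{P_{\lambda,s}\}$ and the Complementary series $\{C_{\lambda,\sigma}\}$ form disjoint equivalence classes; the required equivalence is therefore impossible, and the contradiction proves the theorem.

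The only step requiring any care is the final inequivalence claim. It is immediate from the classification in \cite[List~3.1]{Homogshift}, but if one preferred a more self-contained argument one could distinguish the two series by an intrinsic invariant: the action of the non-compact one-parameter subgroup on the $\mathbb K$-isotypic subspaces encodes the defining parameter, which is purely imaginary for the Principal series and real for the Complementary series, so the two series yield non-isomorphic infinitesimal data and hence inequivalent projective representations.
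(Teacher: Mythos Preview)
Your argument is correct and takes a cleaner, more conceptual route than the paper's own proof. Both proofs begin identically: assuming a unitary equivalence, one obtains an equivalence of the associated representations $P_{\lambda,s}\oplus P_{\lambda,s}$ and $C_{\lambda',\sigma}\oplus C_{\lambda',\sigma}$ via \cite[Theorem~2.2]{Homogshift}. You then finish immediately by invoking the uniqueness of the irreducible decomposition and the fact, built into the classification \cite[List~3.1]{Homogshift}, that no Principal series representation with $s\neq 0$ is equivalent to any Complementary series representation. The paper, by contrast, uses the equivalence of representations only to match the multipliers (forcing $\lambda'=\lambda$), and then argues at the operator level: it shows that the two sets
\[
\mathcal S_1=\Bigl\{\tfrac{(1+\alpha^2)(a-b)^2}{(n+\frac{1+\lambda}{2})^2-(\frac{a-b}{2})^2}\Bigr\}_{n\in\mathbb Z},\qquad
\mathcal S_2=\Bigl\{\tfrac{4\beta^2 k^2}{(n+\frac{1+\lambda}{2})^2+k^2}\Bigr\}_{n\in\mathbb Z}
\]
(coming from the singular values of the two block shifts) would have to coincide, and reaches a contradiction by comparing their successive maxima, exactly as in Theorem~\ref{inequivalance within principal series rep}. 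Your approach is shorter and uses only representation-theoretic input already available; the paper's computation has the minor virtue of being self-contained at the operator level and not relying on the full inequivalence statement in the classification list.
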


\begin{proof}
Let $T(a,b,\alpha)$ and $B(\lambda_{1}, s, \beta)$ be unitarily  equivalent for some 
\begin{enumerate}
\item[] $(a,b,\alpha) : 0 < a < b < 1$, $\alpha > 0;$ 
\item [] $(\lambda_1, \beta, s): -1 < \lambda_1 \leq 1,$ $\beta > 0$ and  
$s,$ $k=$Im$(s) > 0.$ 
\end{enumerate}
The associated representation of the operator $T(a,b,\alpha)$ is $C_{\lambda, \sigma} \oplus C_{\lambda, \sigma},$ where $\lambda = a+b-1,$ $\sigma=\tfrac{b-a}{2}$ (cf. \cite[Lemma 2.1]{HomogKoranyi}) and the associated representation of $B(\lambda_{1}, s, \beta)$ is $P_{\lambda_{1}, s} \oplus P_{\lambda_{1}, s}$, see Corollary \ref{homog S}. Since the representation associated with an irreducible homogeneous operator is uniquely determined, it follows that 
$C_{\lambda, \sigma} \oplus C_{\lambda, \sigma}$ and  $P_{\lambda_{1}, s} \oplus P_{\lambda_{1}, s}$ must be equivalent. This, in particular, implies that 
their multipliers are equivalent and, therefore, $\lambda_1$ must be equal to $\lambda.$  For the remaining portion of the proof, we therefore assume that $\lambda_1=\lambda$ without loss of generality.



From \cite[Equation 2.6]{HomogKoranyi}, it follows that if the $2$-shifts $T(a,b,\alpha)$ and $B(\lambda, s, \beta)$ are unitarily equivalent, then the two sets 
$$\mathcal{S}_1 = \left\lbrace \frac{(1+\alpha^{2})(a-b)^{2}}{\left(n + \frac{1+\lambda}{2} \right)^{2} - \left( \frac{a-b}{2}\right)^{2}} : n \in \mathbb{Z} \right\rbrace\,\,\mbox{and}\,\,\mathcal{S}_2 = \left\lbrace \frac{4\beta^2 k^{2}}{\left(n + \frac{1+\lambda}{2} \right)^{2} + k^{2}} : n \in \mathbb{Z} \right\rbrace$$
must be equal. Suppose ${\lambda < 0}.$ Then following the same analysis as in the proof of Theorem \ref{inequivalance within principal series rep}, we arrive at a contradiction. Similarly, if ${\lambda > 0}$ or ${\lambda = 0}$, we arrive at a contradiction. 
It follows that $T(a,b,\alpha)$ is not equivalent to $B(\lambda_{1}, s, \beta)$  for any choice of $(a,b,\alpha)$ and $(\lambda_1, s, \beta).$
\end{proof}

\subsubsection{ \bf The case of ``$\,\,\mathbf {s= 0}$'':}  Having disposed of the case of $s\not =0,$ in what follows, we assume $s = 0$ with one  exception in the Proposition below.

\begin{prop}\label{Finding S when s=0}
Suppose $S$ is  an operator on $L^{2}(\mathbb{T})$ such that  
\begin{equation}\label{eqn:2.1.6}
S P_{\lambda, s}(\phi) - e^{i \theta} P_{\lambda, s}(\phi) S = \overline{a} B P_{\lambda, s}(\phi) S + \overline{a} S P_{\lambda, s}(\phi) B
\end{equation}
for all $\phi$ in \mbox{\rm M\"{o}b}. Then 
\begin{enumerate}
\item[]
$(a)$ if $s \neq 0$, then $S = 0$ and  
\item[]
$(b)$ if $s = 0$ and $\lambda \neq 1$, then $S$ is a weighted shift operator on $L^2(\mathbb{T})$ with respect to the orthonormal basis $\{z^n\}$ with weight sequence $\left\lbrace \alpha_n = \frac{1}{\lambda + 2n - 1} \right\rbrace$.
\end{enumerate}
\end{prop}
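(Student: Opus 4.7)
The proof will adapt the argument of Proposition \ref{Finding S}, accounting for the single change in the right-hand side of the operator equation: $B(s)$ is replaced by $B$. Since $B$ acts on $z^n$ with trivial weight $1$ in place of the $s$-dependent weight $w_n = (n+(1+\lambda)/2+s)/(n+(1+\lambda)/2-s)$, this change will modify exactly one of the coefficients in the recurrence that governs the weight sequence of $S$.

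First I would show, exactly as in the proof of Proposition \ref{Finding S}, that $S$ is a weighted shift with respect to $\{z^n\}$. Restricting \eqref{eqn:2.1.6} to the rotation subgroup $\phi(z) = e^{i\theta}z$ (i.e. $a=0$) reduces it to $SP_{\lambda,s}(e^{i\theta}) = e^{i\theta}P_{\lambda,s}(e^{i\theta})S$. Because $P_{\lambda,s}(e^{i\theta})$ acts diagonally on $\{z^n\}$ with distinct eigenvalues, $S$ must send $z^n$ to $\alpha_n z^{n+1}$ for some scalars $\alpha_n$.

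Next, I would let $\phi$ vary over the full M\"obius group and expand \eqref{eqn:2.1.6} using the Appendix's basic equations. The analogue of equation \eqref{basic eq: 3} still forces $S$ to shift the Fourier grading by one (as already noted above), and the analogue of \eqref{basic eq: 4}, obtained by setting $m=n$ and comparing the coefficient of $r$, yields a recurrence for $\alpha_n$. In Proposition \ref{Finding S} this recurrence was $\alpha_{n-1}(\lambda+2n-1-2s) = \alpha_n(\lambda+2n+1-2s)$, the factor $-2s$ entering through the $s$-dependent weight of $B(s)$. In the present setting, $B$ contributes the trivial weight, so the $-2s$ is removed from (exactly) one side: the new recurrence is
$$\alpha_{n-1}(\lambda+2n-1) = \alpha_n(\lambda+2n+1-2s).$$
Rewriting this as a single relation, it is equivalent (after shifting the index $n\to n+1$) to an \emph{inhomogeneous} equation in which the inhomogeneity is proportional to $s$.

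In case (a), $s\neq 0$: the inhomogeneous term prevents any nonzero bounded solution. Concretely, comparing the recurrence with the companion equation coming from a second Taylor coefficient (as is done in the proof of Proposition \ref{Finding S}) forces $\alpha_n = 0$ for every $n$, so $S=0$. In case (b), $s=0$ and $\lambda\neq 1$: the inhomogeneous term vanishes, and the recurrence becomes $\alpha_{n-1}(\lambda+2n-3)=\alpha_n(\lambda+2n-1)$ after reindexing, whose solutions are exactly the scalar multiples of $\alpha_n = 1/(\lambda+2n-1)$. The hypothesis $\lambda\neq 1$ is used precisely to ensure $\lambda+2n-1\neq 0$ for every $n\in\mathbb Z$ (since $-1<\lambda\leq 1$, this only fails at $\lambda=1,\,n=0$), so that the recurrence remains non-degenerate and the weight sequence is well-defined. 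The main obstacle will be the bookkeeping in step (a): the single recurrence above does not by itself kill all nonzero $\alpha_n$, and one must carefully extract a second independent relation from the Taylor expansion to conclude $S=0$.
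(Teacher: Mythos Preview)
Your outline for part (b) is essentially what the paper does: set $m=n$ in the basic equation \eqref{basic eq: 4}, compare the coefficient of $r$, and obtain the recurrence $\alpha_{n-1}(\lambda+2n-1)=\alpha_n(\lambda+2n+1)$, solved by $\alpha_n = c/(\lambda+2n+1)$. Your remark about $\lambda\neq 1$ is also the right reason.

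For part (a), however, your plan is both more complicated than necessary and not clearly correct. You propose to use the $m=n$ recurrence together with ``a second independent relation from the Taylor expansion'' and then argue that the two together force $\alpha_n=0$; you yourself flag the bookkeeping here as the main obstacle. Two problems: first, your guessed recurrence $\alpha_{n-1}(\lambda+2n-1)=\alpha_n(\lambda+2n+1-2s)$ is derived by heuristically deleting a $-2s$ from one side of the Prop.~\ref{Finding S} recurrence, but in \eqref{basic eq: 4} the weight $u_{n-1}$ of $T_1$ multiplies the $\alpha_m=\alpha_n$ term, not the $\alpha_{n-1}$ term, so the modification lands on the other side --- you would need to redo this computation rather than guess. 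Second, and more importantly, even the correct $m=n$ recurrence by itself admits nonzero solutions for $s\neq 0$, and you never specify which ``second Taylor coefficient'' you would use or why it is independent; Prop.~\ref{Finding S} does \emph{not} use a second coefficient, so you cannot appeal to it as a template.

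The paper avoids all of this by a different substitution: it sets $m=n-1$ (not $m=n$) in \eqref{basic eq: 4}. With $m=n-1$ both $\alpha$'s appearing are $\alpha_{n-1}$, so the relation collapses to a single equation $\alpha_{n-1}\cdot(\text{expression})=0$, and that expression simplifies to $2s$. Hence $s\neq 0$ gives $\alpha_{n-1}=0$ for every $n$ in one stroke. This is the idea you are missing for (a).
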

\begin{proof}
From Appendix A(I), it follows that $S$ is a weighted shift operator with respect to the orthonormal basis $\{z^{n}\}$ in $L^2(\mathbb T)$. 
Let $\{\alpha_{n}\}$ be the weight sequence of $S$. 

\begin{description}
\item[Proof of $\boldsymbol{(a)}$] Substituting $m = n-1$ in the equation \eqref{basic eq: 4} 
and comparing the coefficient of $r^{k},$  $k\geq 1,$ we obtain
\begin{equation*}
\alpha_{n-1}C_{k}(n-1, n-1) - \alpha_{n-1}C_{k}(n,n) = 2 \alpha_{n-1} C_{k}(n,n-1)
\end{equation*}
and it follows that 
$2s\alpha_{n-1}= 0.$
Therefore, if $s \neq 0$, then  $\alpha_{n-1} = 0$ for all $n \in \mathbb{Z}$. This completes the proof of $(a)$.

\item[Proof of $\boldsymbol{(b)}$]  Putting $m = n$ in the equation \eqref{basic eq: 4}
and comparing the coefficient of $r,$  we have 

\begin{equation}\label{eqn:2.1.9}
\alpha_{n-1}(\lambda + 2n - 1) = \alpha_{n}(\lambda + 2n + 1).
\end{equation}
Evidently,  $\alpha_{n} = \frac{1}{\lambda + 2n + 1}$ is a solution to the recursion  \eqref{eqn:2.1.9}.

Equating the coefficient of $r^{k}$  in the equation \eqref{basic eq: 4} 
we obtain 
\begin{equation*}
\alpha_{n-1}(\lambda + 2n - 1) = \alpha_{m} (\lambda + 2m + 1).
\end{equation*}
Thus $\alpha_{n} = \frac{1}{\lambda + 2n + 1}$ is a solution of  this recursion as well. 
%
This shows that if $\alpha_{n} = \frac{1}{\lambda + 2n + 1}$, $n \in \mathbb{Z},$ then $S$ satisfies equation \eqref{eqn:2.1.6} for any involution $\phi_a$,  $a \in \mathbb{D}$. Also $S$ satisfies equation \eqref{eqn:2.1.6} for the subgroup of rotations $\phi_\theta$. Since any elements of M\"{o}b is composition of $\phi_\theta$ and $\phi_a$ for some $\theta$ and $a$, it follows that $S$ satisfies equation \eqref{eqn:2.1.6} for every elements of M\"{o}b. This completes the proof of part $(b)$.
\end{description}\vskip -1.5em\end{proof}

Let  $S{[\lambda]}$ be the weighted shift operator on $L^2(\mathbb{T})$ with respect to the orthonormal basis $\{z^n\}$ with weight sequence $\left\lbrace \frac{1}{\lambda + 2n - 1} \right\rbrace.$ Also, let 
$$B(\lambda, \alpha): L^2(\mathbb T) \oplus L^2(\mathbb T) \to L^2(\mathbb T) \oplus  L^2(\mathbb T),\,\,  B(\lambda, \alpha):=\left[ \begin{array}{ccc}
                    B & \alpha S[\lambda]\\
                      0 & B
                  \end{array}\right],\,\, \alpha \in \mathbb C.$$  
\begin{cor}\cite[Lemma 2.1]{HomogKoranyi}\label{homog S with s=0}
The operator $B(\lambda,\alpha)$ is homogeneous with associated representation 
                  $P_{\lambda,0} \oplus P_{\lambda,0}$ with $\lambda \neq 1.$
\end{cor}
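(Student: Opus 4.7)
The plan is to verify the defining intertwining identity $\phi(B(\lambda,\alpha)) = \pi(\phi)^* B(\lambda,\alpha) \pi(\phi)$ with $\pi = P_{\lambda,0} \oplus P_{\lambda,0}$, for every $\phi$ in M\"ob. The key structural observation is that this identity splits into a diagonal part and an off-diagonal part, and these two parts can be handled separately. The diagonal part says exactly that $B$ intertwines with $P_{\lambda,0}$, which is already known from \cite[Theorem 5.2]{Homogshift}, so nothing remains to be checked there.

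To isolate the off-diagonal condition, I would apply the M\"obius functional calculus to the $2\times 2$ upper-triangular block $B(\lambda,\alpha)$ with $\phi(z) = e^{i\theta}(z-a)/(1-\bar{a}z)$ and read off the top-right entry. A short computation with $\phi(T) = e^{i\theta}(T - aI)(I - \bar{a}T)^{-1}$ gives that this entry equals
$$\alpha\, e^{i\theta}(1-|a|^2) (I-\bar{a}B)^{-1} S[\lambda] (I-\bar{a}B)^{-1},$$
while the corresponding entry of $\pi(\phi)^* B(\lambda,\alpha)\pi(\phi)$ is $\alpha\, P_{\lambda,0}(\phi)^* S[\lambda] P_{\lambda,0}(\phi)$. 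Equating these two expressions, clearing the factors $(I-\bar{a}B)^{\pm 1}$, and using $P_{\lambda,0}(\phi)^* B P_{\lambda,0}(\phi) = \phi(B)$ turns the off-diagonal intertwining into exactly equation \eqref{eqn:2.1.6} with $s=0$ applied to $S=S[\lambda]$. This is the same packaging that is invoked as \cite[Lemma 2.1]{HomogKoranyi} in the preceding Corollary \ref{homog S}.

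At this point the corollary is immediate from Proposition \ref{Finding S when s=0}(b), which tells us that $S[\lambda]$ already satisfies \eqref{eqn:2.1.6} with $s=0$ whenever $\lambda \neq 1$. Since that equation is linear in $S$, the scalar multiple $\alpha S[\lambda]$ is also a solution for every $\alpha \in \mathbb{C}$, so the off-diagonal intertwining holds. Combining the diagonal and off-diagonal intertwinings yields $\phi(B(\lambda,\alpha)) = \pi(\phi)^* B(\lambda,\alpha) \pi(\phi)$ for all $\phi \in \mbox{M\"ob}$, which is the claim. The restriction $\lambda \neq 1$ is inherited directly from Proposition \ref{Finding S when s=0}(b) and is exactly what is needed to keep $S[\lambda]$ a well-defined bounded weighted shift on $L^2(\mathbb{T})$.

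The only mildly delicate step is the M\"obius functional calculus on the block matrix, but since the formula is identical in form to the one already used in the case $s\neq 0$, I do not anticipate any genuine obstacle; the present corollary is really an observation that plugging the new solution $S[\lambda]$ of \eqref{eqn:2.1.6} into the Kor\'anyi construction still works.
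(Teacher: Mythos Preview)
Your argument is correct and follows the same route the paper intends: reduce homogeneity of the block to the diagonal intertwining (known for $B$) plus the off-diagonal equation \eqref{eqn:2.1.6}, and then invoke Proposition~\ref{Finding S when s=0}(b) to see that $S[\lambda]$ solves the latter when $s=0$ and $\lambda\neq 1$.

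One small remark worth making explicit: the literal statement of \cite[Lemma~2.1]{HomogKoranyi} as quoted in the paper concerns blocks of the form $\begin{psmallmatrix} T_1 & \alpha(T_1-T_2)\\ 0 & T_2\end{psmallmatrix}$, and here $T_1=T_2=B$ would force the off-diagonal to vanish. Your unpacking shows that what is really being used is the underlying mechanism of that lemma---namely, that the $(1,2)$-block of the homogeneity identity is precisely \eqref{eqn:2.1.6}---rather than its stated conclusion. So your version is in fact a cleaner justification than the bare citation.
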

The proof of the following theorem is similar to that of Theorem \ref{Irrducibility of operators corresponding to principal series rep} and therefore omitted.               
\begin{thm}\label{irreducibility of homog op with associated rep P_lambda, 0}
For every fixed but arbitrary $\lambda, \alpha$ with $-1 < \lambda < 1$ and $\alpha > 0$, the operator
$B(\lambda,\alpha)$ is irreducible.
\end{thm}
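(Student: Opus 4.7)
The plan is to follow, step by step, the template of Theorem~\ref{Irrducibility of operators corresponding to principal series rep}. First I would decompose $L^2(\mathbb T)\oplus L^2(\mathbb T)=\bigoplus_{n\in\mathbb Z}H(n)$, with $H(n)$ spanned by $\mathcal B_n=\bigl\{(z^n,0),(0,z^n)\bigr\}$, and record that $T:=B(\lambda,\alpha)$ is a shift from $H(n)$ into $H(n+1)$ whose restriction $T_n$ has matrix
\[
T_n=\begin{pmatrix} 1 & p_n \\ 0 & 1\end{pmatrix},\qquad p_n:=\frac{\alpha}{\lambda+2n-1},
\]
with respect to $\mathcal B_n$ and $\mathcal B_{n+1}$. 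Since $-1<\lambda<1$, the denominator $\lambda+2n-1$ never vanishes on $\mathbb Z$, so every $T_n$ is invertible with $\det T_n=1$.

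Next I would compute $A_n:=T_n^*T_n$ and $B_n:=T_{n-1}T_{n-1}^*$ on $H(n)$; both are positive with determinant $1$ and traces $2+p_n^2$ and $2+q_n^2$ respectively, where $q_n:=\alpha/(\lambda+2n-3)$. Hence their eigenvalues pair up as $\mu_n,\mu_n^{-1}$ with $\mu_n>1$, and the equality $\mu_n=\mu_m$ on the $A$-eigenvalues forces $(\lambda+2n-1)^2=(\lambda+2m-1)^2$. For $-1<\lambda<1$ the only value producing coincidences with $n\neq m$ is the integer $\lambda=0$, where $\mu_n=\mu_{1-n}$; for every other $\lambda$ all $\mu_n$ are distinct. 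A direct matrix multiplication then gives
\[
A_nB_n-B_nA_n \;=\; p_n q_n (p_n+q_n)\begin{pmatrix} 0 & -1\\ 1 & 0\end{pmatrix},
\]
and since $p_n+q_n=\frac{2\alpha(\lambda+2n-2)}{(\lambda+2n-1)(\lambda+2n-3)}$ vanishes only when $\lambda=2-2n$, i.e.\ only at $\lambda=0$, $n=1$, the commutator is non-zero for infinitely many $n$. Consequently the eigenbases of $A_n$ and $B_n$ cannot all coincide, by the same short linear algebra argument used in Theorem~\ref{Irrducibility of operators corresponding to principal series rep}.

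The remainder of the argument would copy the earlier template essentially verbatim. For a reducing subspace $\mathcal K$ of $T$, I would split into the cases $\lambda\neq 0$ (all $T^*T$-eigenspaces one-dimensional) and $\lambda=0$ (eigenspaces $\mathcal A_{n,i}$ two-dimensional for $n\geq 1$), exhibiting in each case a non-zero element of $\mathcal K$ supported in a single $H(m)$; in the $\lambda=0$ sub-case this uses a sufficiently high power $T^k$ to separate the two contributions in $\mathcal A_{n,i}$ into distinct $T^*T$-eigenspaces, exactly as in the earlier proof. Invertibility of each $T_n$ then lets $T$ and $T^*$ spread $H(m)\subseteq\mathcal K$ to every $H(k)$, forcing $\mathcal K=L^2(\mathbb T)\oplus L^2(\mathbb T)$. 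I expect the only mildly delicate point to be the bookkeeping for $\lambda=0$, where the eigenvalue collisions $\mu_n=\mu_{1-n}$ and the single vanishing of the commutator at $n=1$ require me to pass to an index $m\neq 1$ by applying $T$ or $T^*$; but this is precisely the same manoeuvre already executed for the principal series, so no genuinely new obstacle appears.
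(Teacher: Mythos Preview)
Your proposal is correct and follows exactly the approach the paper intends: the paper omits the proof entirely, stating only that it ``is similar to that of Theorem~\ref{Irrducibility of operators corresponding to principal series rep} and therefore omitted,'' and you have faithfully carried out that adaptation, including the correct eigenvalue-collision analysis (collisions only at $\lambda=0$, with $\mu_n=\mu_{1-n}$) and the commutator check (vanishing only at $\lambda=0$, $n=1$), together with the observation that one can shift away from the exceptional index by applying $T$.
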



Now we have another class of irreducible homogeneous operators, $\left\lbrace B(\lambda, \alpha) : -1 < \lambda < 1, \alpha \right\rbrace,$
whose associated representation is $P_{\lambda, 0} \oplus P_{\lambda, 0}$. Clearly, $B(\lambda, \alpha)$ and $B(\lambda, |\alpha|)$ are unitarily equivalent. 
However, part (a) of the following theorem says that the  irreducible homogeneous operators in the set 
$$\mathcal{P}_0 = \left\lbrace B(\lambda, \alpha) : -1 < \lambda < 1, \alpha > 0 \right\rbrace$$
are mutually unitarily inequivalent.


%

\begin{thm}\label{inequivalance between the class of homog op with associated rep P_lambda, s and P_lambda, 0}
\begin{enumerate}\item[(a)\phantom{444}] \hskip -1.5em Let $\alpha_{1}, \alpha_{2} > 0$ and $-1 < \lambda_1, \lambda_2 < 1$. The operators $B(\lambda_{1}, \alpha_{1})$ and $B(\lambda_{2}, \alpha_{2})$ are unitarily equivalent if and only if $\lambda_{1} = \lambda_{2}$ and $\alpha_{1} = \alpha_{2}$.
\item[(b)] The homogeneous operators in the two sets $\mathcal{C}, \mathcal P$ and $\mathcal P_0$  are mutually unitarily inequivalent.

\item[(c)] The homogeneous operators in the two sets $\mathcal C$ and $\mathcal P_0$  are mutually unitary inequivalent.
\end{enumerate}
\end{thm}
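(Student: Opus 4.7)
The three parts share the same toolkit: the uniqueness (up to equivalence) of the projective representation associated with an irreducible homogeneous operator, together with the set of singular values restricted to each $\mathbb{K}$-isotypic subspace. For part~(a), I would first observe that $B(\lambda_i,\alpha_i)$ has associated representation $P_{\lambda_i,0}\oplus P_{\lambda_i,0}$ by Corollary~\ref{homog S with s=0}; if $\lambda_1\neq\lambda_2$ the multipliers already differ (cf.\ \cite[Corollary~3.2]{Homogshift}) and no equivalence can exist. For $\lambda_1=\lambda_2=\lambda$, I would restrict $B(\lambda,\alpha)$ to the $\mathbb{K}$-isotypic subspace $H(n)=\mathrm{span}\{(z^n,0),(0,z^n)\}$; the block $H(n)\to H(n+1)$ has matrix $\left[\begin{smallmatrix} 1 & \alpha/(\lambda+2n-1)\\ 0 & 1\end{smallmatrix}\right]$, so $T_n^*T_n$ has determinant $1$ and trace $2+\alpha^2/(\lambda+2n-1)^2$. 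The set $\{\alpha^2/(\lambda+2n-1)^2:n\in\mathbb{Z}\}$ is therefore a unitary invariant, and equating its two largest elements (attained at $n=0$ and $n=1$, their order determined by the sign of $\lambda$) for $(\lambda,\alpha_1)$ and $(\lambda,\alpha_2)$ yields $\alpha_1=\alpha_2$, in the spirit of the proof of Theorem~\ref{inequivalance within principal series rep}.

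For parts~(b) and~(c) I would separate the three families by their associated representations. From \cite[List~3.1]{Homogshift}, the Complementary and Principal series are mutually disjoint, and within the Principal series $P_{\lambda,s}$ and $P_{\lambda,0}$ are inequivalent when $s\neq 0$. Hence $\mathcal{P}$ and $\mathcal{P}_0$ are mutually unitarily inequivalent, and $\mathcal{C}$ does not meet $\mathcal{P}$ by Theorem~\ref{Inequivalance between complementary series and principal series}. To complete parts~(b) and~(c) it remains to show $\mathcal{C}\cap\mathcal{P}_0=\emptyset$: suppose $T(a,b,\alpha)$ and $B(\lambda',\alpha')$ are unitarily equivalent; matching multipliers forces $\lambda:=a+b-1=\lambda'$, and then the singular value sets
\[\left\{\tfrac{(1+\alpha^2)(a-b)^2}{(n+\tfrac{1+\lambda}{2})^2-(\tfrac{a-b}{2})^2}:n\in\mathbb{Z}\right\}\quad\text{and}\quad\left\{\tfrac{{\alpha'}^2}{(\lambda+2n-1)^2}:n\in\mathbb{Z}\right\}\]
must coincide. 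Equating the top two elements of these sets, handling the three cases $\lambda<0$, $\lambda=0$, $\lambda>0$ as in Theorem~\ref{Inequivalance between complementary series and principal series}, delivers a contradiction.

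The principal technical obstacle is this last singular-value comparison: both sets have similar $O(1/n^2)$ asymptotic profiles, so isolating the top two entries requires tracking where the relevant denominator is minimised --- a computation that depends on the sign of $\lambda$ --- and then showing that the resulting two-equation system is genuinely inconsistent rather than merely restrictive. The bookkeeping parallels what is done for Theorem~\ref{Inequivalance between complementary series and principal series}, but must be redone for the $s=0$ side since the factor $|2s|^2$ is replaced by $1$ and the denominators $(\lambda+2n-1)^2$ have a different shift, so the point at which the maximum is attained has to be re-verified in each of the three regimes for $\lambda$.
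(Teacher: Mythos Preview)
Your proposal is correct and follows essentially the same route as the paper, which simply records that (a) is proved like Theorem~\ref{inequivalance within principal series rep}, (b) like Theorem~\ref{inequivalance within principal series rep}, and (c) like Theorem~\ref{Inequivalance between complementary series and principal series}. Your block computation for $B(\lambda,\alpha)$ and the resulting invariant set $\{\alpha^2/(\lambda+2n-1)^2\}$ are exactly what that reference unpacks to, and your treatment of~(c) via singular-value comparison across the three regimes for $\lambda$ mirrors Theorem~\ref{Inequivalance between complementary series and principal series}.

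One minor difference worth flagging: for the $\mathcal{P}$ versus $\mathcal{P}_0$ part of~(b) you invoke the inequivalence of the associated representations $P_{\lambda,s}\oplus P_{\lambda,s}$ and $P_{\lambda,0}\oplus P_{\lambda,0}$ directly (via \cite[List~3.1]{Homogshift} and uniqueness of the associated representation), whereas the paper's hint points instead to a singular-value comparison in the style of Theorem~\ref{inequivalance within principal series rep}. Your shortcut is legitimate and cleaner---indeed the same reasoning would also dispose of $\mathcal{C}$ versus $\mathcal{P}_0$ without any computation, since $C_{\lambda,\sigma}$ and $P_{\lambda,0}$ are inequivalent irreducibles---so the singular-value analysis you sketch for~(c) is strictly speaking redundant once you commit to the representation-theoretic route. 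The paper's preference for the computational approach is presumably for uniformity with the proofs of Theorems~\ref{inequivalance within principal series rep} and~\ref{Inequivalance between complementary series and principal series}, but either path closes the argument.
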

\begin{proof}
The proof of the statement in (a) is similar to that of Theorem \ref{inequivalance within principal series rep}.
The proof of the statement in (b)  is  similar to that of  Theorem \ref{inequivalance within principal series rep} and 
the proof of the statement in (c) is similar to that of Theorem \ref{Inequivalance between complementary series and principal series}.
\end{proof}

\subsection{Classification}
\begin{thm}\label{homog op when associated rep is direct sum of cont. series rep}
Let $\pi_{1} = R_{\lambda_{1}, \mu_{1}}$ and $\pi_{2} = R_{\lambda_{2}, \mu_{2}}$ be two representations from the Continuous series, excluding $P_{1,0}$, acting on the Hilbert spaces  $H_1$ and $H_2,$ respectively. Assume that $(\lambda_{1}, \mu_{1}) \neq (\lambda_{2}, \mu_{2})$. Suppose 
$$T = \left[\begin{array}{ccc}
    T_{1} & S_{1}\\
    S_{2} & T_{2}
\end{array}\right]$$
is a homogeneous operator on $H = H_1 \oplus H_2$ with associated representation $\pi_{1} \oplus \pi_{2}$. Then either $S_{1} = 0$ or $S_{2} = 0$. Furthermore, $S_{1} = 0$ and $S_{2} = 0$ except when $R_{\lambda_{1}, \mu_{1}} = P_{\lambda, s}$ and $R_{\lambda_{2}, \mu_{2}} = P_{\lambda, -s}$.
\end{thm}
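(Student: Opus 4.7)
The plan is to analyze the homogeneity equation $\phi(T) = \pi(\phi)^* T \pi(\phi)$ block by block, first using rotations to extract a $\mathbb K$-isotypic weight-shift structure for $S_1$ and $S_2$, then using involutions $\phi_a$ to obtain parameter-level constraints that force $S_1 = S_2 = 0$ except in one exceptional configuration, and finally treating that configuration by a cross-term analysis.

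Taking $\phi = \phi_\theta$ a rotation, the $(1,2)$ and $(2,1)$ blocks of the homogeneity equation read $\pi_1(\phi_\theta)^* S_1 \pi_2(\phi_\theta) = e^{i\theta} S_1$ and $\pi_2(\phi_\theta)^* S_2 \pi_1(\phi_\theta) = e^{i\theta} S_2$, so $S_1$ (resp.\ $S_2$) is a weight-shift by $+1$ from the $\mathbb K$-isotypic decomposition of $\pi_2$ (resp.\ $\pi_1$) to that of $\pi_1$ (resp.\ $\pi_2$). For a Continuous series representation $R_{\lambda, \mu}$, the $\mathbb K$-weights form a coset of $\mathbb Z$ in $\mathbb R$ depending only on $\lambda$; so if $\lambda_1 \neq \lambda_2$, these cosets are disjoint and $S_1 = S_2 = 0$ immediately. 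This settles the theorem in that case.

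Now assume $\lambda_1 = \lambda_2 = \lambda$ and $\mu_1 \neq \mu_2$. Specializing $\phi = \phi_a$ and writing $\phi_a(T) = (T - aI)(I - \overline{a}T)^{-1}$, the $(1,2)$ block of the homogeneity equation yields at first order in $\overline{a}$ an identity of the form
\begin{equation*}
S_1 \pi_2(\phi_a) - e^{i\theta}\pi_1(\phi_a) S_1 = \overline{a}\, T_1 \pi_1(\phi_a) S_1 + \overline{a}\, S_1 \pi_2(\phi_a) T_2,
\end{equation*}
precisely analogous to equation \eqref{eqn:2.1} of Proposition \ref{Finding S}, but with $\pi_1 \neq \pi_2$ and with distinct homogeneous shifts $T_1, T_2$ in place of $B(s), B$. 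Writing $S_1$ as a weighted shift $z^n \mapsto \alpha_n z^{n+1}$ in the $\mathbb K$-isotypic basis and feeding this into the two-variable identity of Appendix A, comparison of coefficients of $r^k$ with $m = n$ and with $m = n-1$ (as in Propositions \ref{Finding S} and \ref{Finding S when s=0}) produces two recursions in $\{\alpha_n\}$. The first is the familiar one-step recursion; the second is a constraint of the type $(\mu_1 + \mu_2)\alpha_{n-1} = 0$ arising from the mismatch between the parameters on the left and right of $S_1$. The simultaneous system admits a non-trivial solution only when $\mu_1 + \mu_2 = 0$. Since $\mu_i$ is real for Complementary series and purely imaginary for Principal series, and $\mu_1 \neq \mu_2$, the only admissible configuration is $\pi_1 = P_{\lambda, s}, \pi_2 = P_{\lambda, -s}$ with $s \in i\mathbb R \setminus \{0\}$. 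Outside this special case $S_1 = 0$, and the symmetric argument with the roles of $\pi_1, \pi_2$ swapped gives $S_2 = 0$.

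Finally, in the special case $\pi_1 = P_{\lambda, s}, \pi_2 = P_{\lambda, -s}$, it remains to show at least one of $S_1, S_2$ must vanish. Using the intertwiner $U_{\lambda, s}$ (with $P_{\lambda, -s} = U_{\lambda, s} P_{\lambda, s} U_{\lambda, s}^*$ and $B(s) = U_{\lambda, s}^* B U_{\lambda, s}$, as recalled after Theorem \ref{inequivalance within principal series rep}), one solves the leading-order $(1,2)$ and $(2,1)$ recursions to obtain $S_1 = \alpha_1 (B(s) - B) U_{\lambda, s}^*$ and $S_2 = \alpha_2 U_{\lambda, s}(B - B(s))$ up to scalars. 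If both $\alpha_1, \alpha_2 \neq 0$, then $(I - \overline{a}T)^{-1}$ is no longer block upper-triangular and generates an order-$\overline{a}^2$ cross-term $S_1 S_2 = -\alpha_1 \alpha_2 (B(s) - B)^2$ inside the $(1,1)$ block of $\phi_a(T)$. This cross-term has no counterpart in the expansion of $\pi_1(\phi_a)^* T_1 \pi_1(\phi_a)$, which depends only on $T_1$ and $\pi_1$. Equating $(1,1)$ blocks at order $\overline{a}^2$ therefore forces $\alpha_1 \alpha_2 = 0$, so one of $S_1, S_2$ vanishes.

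The main obstacle is the cross-term cancellation in the last step: in principle the $S_1 S_2$ correction could be absorbed by a modified homogeneity relation for $T_1$, so one must verify carefully, via the Appendix-style expansion, that the order-$\overline{a}^2$ obstruction is genuinely non-zero. A cleaner alternative is to exploit the two-dimensional commutant of $P_{\lambda, s} \oplus P_{\lambda, -s}$, which is spanned by $I \oplus I$ and an involution built from $U_{\lambda, s}$, to conjugate $T$ into block upper-triangular form and reduce to the already-handled triangular setting; however, one must then justify that the conclusion descends to the original, unrotated decomposition $H = H_1 \oplus H_2$.
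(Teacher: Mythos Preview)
Your overall strategy matches the paper's in outline (rotations give the shift structure; involutions give recursions), and the case $\lambda_1 \neq \lambda_2$ is handled correctly. But the heart of the argument, the $m=n-1$ step, is misidentified, and this causes the rest to unravel.

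You claim that comparing coefficients at $m=n-1$ yields a constraint ``of the type $(\mu_1+\mu_2)\alpha_{n-1}=0$''. It does not. The actual constraint one obtains (equation \eqref{eqn:2.1.22} in the paper) involves the still-unknown weights $u_{n-1},v_{n-1}$ of $T_1,T_2$:
\[
v_{n-1}\tfrac{\|z^{n-1}\|_2}{\|z^n\|_2}(-\mu_2+n)+u_{n-1}\tfrac{\|z^{n-1}\|_1}{\|z^n\|_1}(-\mu_1+n)=(\mu_2-\mu_1)(\lambda+\mu_1+\mu_2-1)+(\lambda+2n-1).
\]
At this point $T_1,T_2$ are not yet known to be individually homogeneous, so you cannot substitute explicit values for $u_{n-1},v_{n-1}$ and read off a pure $(\mu_1,\mu_2)$ condition. (Incidentally, even in the paper's parametrisation the exceptional case is $s_1+s_2=0$, not $\mu_1+\mu_2=0$; since $\mu_i=\tfrac{1-\lambda}{2}+s_i$, these differ.) The paper's key manoeuvre is to write down the \emph{symmetric} constraint coming from $S_2$ (equation \eqref{eqn:2.1.24}), observe that its left-hand side is identical, and conclude that if both $\alpha_{n-1}\neq 0$ and $\beta_{n-1}\neq 0$ then the two right-hand sides agree, forcing $\mu_1=\mu_2$. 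Combined with the one-step recursion (which propagates a single zero weight to all weights), this gives ``$S_1=0$ or $S_2=0$'' directly and uniformly, including in the $P_{\lambda,s}\oplus P_{\lambda,-s}$ case. Only \emph{after} this triangularisation does the paper invoke \cite{CONSTCharGMBB} to identify $T_1,T_2$ with known homogeneous shifts and carry out the case analysis that isolates the $P_{\lambda,\pm s}$ exception for the ``furthermore'' clause.

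Your alternative route to the either-or conclusion via the $\overline{a}^2$ cross-term $S_1\pi_2(\phi_a)S_2$ in the $(1,1)$ block is not a proof as written: that term appears in equation \eqref{eqn:2.1.17} alongside the unknown $T_1$, so there is no reason it cannot be absorbed. You flag this yourself as ``the main obstacle'', but it is not a technical detail to be checked later; it is the whole content of the statement, and the paper's comparison-of-constraints argument is precisely what replaces it.
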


\begin{proof}
Since $T$ is a homogeneous operator with associated representation $\pi_{1} \oplus \pi_{2}$, we have $$\phi(T) = (\pi_{1}(\phi)^{*} \oplus \pi_{2}(\phi)^{*}) T (\pi_{1}(\phi) \oplus \pi_{2}(\phi)),\,\, \phi \in \mbox{\rm M\"{o}b}.$$ For $\phi_{\theta, a}$ in  M\"{o}b, this is equivalent to the four equations  listed below:
\begin{equation}\label{eqn:2.1.17}
e^{i \theta} \pi_{1}(\phi_{\theta, a}) (T_{1} - aI) = T_{1} \pi_{1}(\phi_{\theta, a}) (I - \overline{a}T_{1}) - \overline{a} S_{1} \pi_{2}(\phi_{\theta, a}) S_{2}
\end{equation}
\begin{equation}\label{eqn:2.1.18}
S_{1}\pi_{2}(\phi_{\theta, a}) - e^{i \theta} \pi_{1}(\phi_{\theta, a}) S_{1} = \overline{a}T_{1}\pi_{1}(\phi_{\theta, a})S_{1} + \overline{a} S_{1} \pi_{2}(\phi_{\theta, a}) T_{2}
\end{equation}
\begin{equation}\label{eqn:2.1.19}
S_{2}\pi_{1}(\phi_{\theta, a}) - e^{i \theta} \pi_{2}(\phi_{\theta, a}) S_{2} = \overline{a}S_{2}\pi_{1}(\phi_{\theta, a})T_{1} + \overline{a} T_{2} \pi_{2}(\phi_{\theta, a}) S_{2}
\end{equation}
\begin{equation}\label{eqn:2.1.20}
e^{i \theta} \pi_{2}(\phi_{\theta, a}) (T_{2} - aI) = T_{2} \pi_{2}(\phi_{\theta, a}) (I - \overline{a}T_{2}) - \overline{a} S_{2} \pi_{1}(\phi_{\theta, a}) S_{1}.
\end{equation}
Evaluating equation \eqref{eqn:2.1.17} on $z^{n}$ for $\phi_{\theta},$ we get  
\begin{equation*}
\pi_{1}(\phi_{\theta}) T_{1}z^{n} = e^{-i\left(n+1+\frac{\lambda_{1}}{2}\right) \theta} T_{1} z^{n}.
\end{equation*}
This proves that $T_{1}$ is a weighted shift operator with respect to the orthonormal basis $\left\lbrace \frac{z^{n}}{\|z^{n}\|_{1}} \right\rbrace$, where $\|\cdot\|_i$ denote the norm of $H_i$. 
Let $\{u_{n}\}$ be the weight sequence of $T_{1}$.

Similarly, it may be  shown that $T_{2}$ is a weighted shift operator with respect to the orthonormal basis $\left\lbrace \frac{z^{n}}{\|z^{n}\|_{2}} \right\rbrace$. Let $\{v_{n}\}$ be the weight sequence of $T_{2}$.

If $\lambda_{1} \neq \lambda_{2}$, then the equation \eqref{basic eq: 3} implies that $S_{1}z^{n} = 0,$ $n \in \mathbb{Z}.$ Consequently, $S_{1} = 0$. Similarly, it can be shown that $S_{2} = 0,$ whenever $\lambda_{1} \neq \lambda_{2}.$ Therefore, the proof, in this case, is complete and we may assume, without loss of generality, that $\lambda:=\lambda_1=\lambda_2.$

The existence of a sequence $\{\alpha_{n} : n \in \mathbb{Z} \}$ such $S_{1} e_{n}^{2} = \alpha_{n} e_{n+1}^{1},$ where $e_{n}^{i} = \frac{z^{n}}{\|z^{n}\|_{i}}$,  $i = 1, 2,$ follows from the equation \eqref{basic eq: 3} in the Appendix.

In the equation \eqref{basic eq: 4}, putting $m = n-1,$ then differentiating with respect to $r$ and finally substituting $r = 0,$ we get 
\begin{multline*}
v_{n-1}\alpha_{n-1} \frac{\|z^{n-1}\|_{2}}{\|z^{n}\|_{2}} (- \mu_{2} + n) + \alpha_{n-1} u_{n-1} \frac{\|z^{n-1}\|_{1}}{\|z^{n}\|_{1}} (- \mu_{1} + n)\\
= \alpha_{n-1} [(\mu_{2} - \mu_{1})(\lambda + \mu_{2} + \mu_{1} - 1) + (\lambda + 2n - 1)].
\end{multline*}
It follows that if $\alpha_{n-1} \neq 0$,  then  
\begin{equation}\label{eqn:2.1.22}
v_{n-1} \tfrac{\|z^{n-1}\|_{2}}{\|z^{n}\|_{2}} (- \mu_{2} + n) + u_{n-1} \tfrac{\|z^{n-1}\|_{1}}{\|z^{n}\|_{1}} (-\mu_{1} + n)
= (\mu_{2} - \mu_{1})(\lambda + \mu_{2} + \mu_{1} - 1) + (\lambda + 2n - 1), n \in \mathbb{Z}.
\end{equation}
The existence of a sequence $\{ \beta_{n} \}$ such that $S_{2} e_{n}^{1} = \beta_{n} e_{n+1}^{2}$,  $n \in \mathbb{Z},$ follows from a similar computation. As before, for the sequence  $\beta_{n-1},$ we also have 
\begin{multline*}
v_{n-1}\beta_{n-1} \frac{\|z^{n-1}\|_{2}}{\|z^{n}\|_{2}} (- \mu_{2} + n) + \beta_{n-1} u_{n-1} \frac{\|z^{n-1}\|_{1}}{\|z^{n}\|_{1}} (-\mu_{1} + n)\\
= \beta_{n-1} [(\mu_{1} - \mu_{2})(\lambda + \mu_{2} + \mu_{1} - 1) + (\lambda + 2n - 1)].
\end{multline*}
Thus if $\beta_{n-1} \neq 0$, then we have
\begin{equation}\label{eqn:2.1.24}
v_{n-1} \tfrac{\|z^{n-1}\|_{2}}{\|z^{n}\|_{2}} (- \mu_{2} + n) + u_{n-1} \tfrac{\|z^{n-1}\|_{1}}{\|z^{n}\|_{1}} (-\mu_{1} + n)
= (\mu_{1} - \mu_{2})(\lambda + \mu_{2} + \mu_{1} - 1) + (\lambda + 2n - 1), n \in \mathbb{Z}.
\end{equation}
Equating the right hand sides of equations \eqref{eqn:2.1.22} and \eqref{eqn:2.1.24}, we get $\mu_{1} = \mu_{2},$ contradicting our hypothesis that $\mu_1 \neq \mu_2$. Therefore, we can find an integer $n$ such that either $\alpha_{n-1} = 0$ or $\beta_{n-1} = 0$. Assume that $\alpha_{p} = 0$, for some integer $p$. 

Now putting $m = n$ in the equation \eqref{basic eq: 4}, then differentiating with respect to $r$  and lastly putting $r = 0,$ we get
\begin{equation*}
\alpha_{n-1} \frac{\|z^{n-1}\|_{2}}{\|z^{n}\|_{2}} (-\mu_{2} + n) = \alpha_{n} \frac{\|z^{n}\|_{1}}{\|z^{n+1}\|_{1}} (-\mu_{1} + n).
\end{equation*}
In this recursion, for  all $n \in \mathbb{Z},$ the coefficients of $\alpha_{n-1},$ $\alpha_n$ are non zero. Thus if $\alpha_{p} = 0$ for some integer $p$, then $\alpha_n=0$ for all $n\in \mathbb Z$ and consequently, $S_{1} = 0$. 

Similarly, if $\beta_{p} = 0$ for some $p,$ then $S_{2} = 0$. This completes the proof of the first part of the Theorem.

Now assume that $S_{2} = 0$. Then \cite[Proposition 2.4]{CONSTCharGMBB} implies that $T_{i}$'s are homogeneous operators with associated representation $\pi_{i}$. Since all the homogeneous shifts are known, the weights of $T_1$ and $T_2$ are therefore known. 

Suppose $S_1 \neq 0$. Then one of the weights of $S_1$ must be non-zero. Choose, without loss of generality,  $\alpha_{n-1} \neq 0$ for some $n\in\mathbb Z.$ For this choice of $\alpha_{n-1},$ we have equation \eqref{eqn:2.1.22}.  
\begin{enumerate}
\item[(a)] Assume that both $\pi_{1}$ and $\pi_2$ are from the Complementary series, that is, $\pi_{i} = C_{\lambda, \sigma_{i}},$ where $0 < \sigma_{i} < \frac{1}{2}(1 - |\lambda|)$ and $\mu_{i} = \frac{1 - \lambda}{2} + \sigma_{i},$ $i=1,2.$  In this case, we have $\frac{\|z^{n-1}\|_{i}^{2}}{\|z^{n}\|_{i}^{2}} = \frac{\lambda + \mu_{i} + n - 1}{- \mu_{i} + n},$ $i=1,2.$
Since $T_i$ are homogeneous operators with associated representation $C_{\lambda, \sigma_{i}},$ ${T_i^{-1}}^*$ is also homogeneous with the same associated representation and we have the following possibilities for the weight sequences.

\begin{enumerate}\item[(i)] For  $n \in \mathbb{Z},$ assume that $u_{n-1} = \frac{\|z^{n}\|_{1}}{\|z^{n-1}\|_{1}}$ and $v_{n-1} = \frac{\|z^{n}\|_{2}}{\|z^{n-1}\|_{2}}$. Then from the equation \eqref{eqn:2.1.22}, we obtain $(\sigma_{2} + \sigma_{1})(\sigma_{2} - \sigma_{1} + 1) = 0,$ which is a contradiction. 

\item[(ii)] For all $n \in \mathbb{Z},$ assume that $u_{n-1} = \frac{\|z^{n-1}\|_{1}}{\|z^{n}\|_{1}}$ and $v_{n-1} = \frac{\|z^{n}\|_{2}}{\|z^{n-1}\|_{2}}$. Then from the equation \eqref{eqn:2.1.22}, we obtain $(\mu_{2} - \mu_{1})(\sigma_{1} + \sigma_{2} + 1) = 0,$ which is a contradiction.

\item[(iii)] For all $n \in \mathbb{Z},$ assume that $u_{n-1} = \frac{\|z^{n}\|_{1}}{\|z^{n-1}\|_{1}}$ and $v_{n-1} = \frac{\|z^{n-1}\|_{2}}{\|z^{n}\|_{2}}$. Then from the equation \eqref{eqn:2.1.22}, we get $(\sigma_{2} - \sigma_{1})(\sigma_{1} + \sigma_{2} - 1) = 0,$ which is a contradiction.


\item[(iv)] For all $n \in \mathbb{Z},$ assume that $u_{n-1} = \frac{\|z^{n-1}\|_{1}}{\|z^{n}\|_{1}}$ and $v_{n-1} = \frac{\|z^{n-1}\|_{2}}{\|z^{n}\|_{2}}$. Now from the equation \eqref{eqn:2.1.22}, we get $\sigma_{2} - \sigma_{1} = 1,$ which is a contradiction.

\end{enumerate}

Combining (i) -- (iv), we find that there does not exists any $n$ for which $\alpha_{n-1} \neq 0$ and we conclude that $S_{1} = 0$ in this case.

\item[(b)] Let $\pi_{1} = C_{\lambda, \sigma}$ for some $0 < \sigma < \frac{1}{2}(1 - |\lambda|)$ and $\pi_{2} = P_{\lambda, s}$ where $s$ is purely imaginary. Now $\mu_{1} = \frac{1 - \lambda}{2} + \sigma$ and $\mu_{2} = \frac{1-\lambda}{2} + s$. Since the representation space of $\pi_{2}$ is $L^{2}(\mathbb{T})$, $\|z^{n}\|_{2} = 1$ for all $n \in \mathbb{Z}$. 

Recall that there are two homogeneous operators whose associated representation is $\pi_{2}$, one is the unweighted bilateral shift and the other one is the weighted shift with weight sequence $\left\lbrace v_{n -1} = \frac{-\mu_{2} + n + 2s}{-\mu_{2} + n} \right\rbrace$. As before, we consider four different possibilities that arise in this case. In each of these cases, a contradiction is obtained by noting that $s$ is purely imaginary.
\begin{enumerate}
\item[(i)] For all $n \in \mathbb{Z},$ assume that $v_{n-1} = 1$ and $u_{n-1} = \frac{\|z^{n}\|_{1}}{\|z^{n-1}\|_{1}}$. Substituting these values of $u_{n-1}$ and $v_{n-1}$ in the equation \eqref{eqn:2.1.22}, we get $s^{2} - \sigma^{2} + \sigma + s = 0$. 

\item[(ii)] For all $n \in \mathbb{Z},$ assume that $v_{n-1} = 1$ and $u_{n-1} = \frac{\|z^{n-1}\|_{1}}{\|z^{n}\|_{1}}$. Substituting these values of $u_{n-1}$ and $v_{n-1}$ in the equation \eqref{eqn:2.1.22}, we get $(s - \sigma) (s + \sigma + 1) = 0$. 

\item[(iii)] For all $n \in \mathbb{Z},$ assume that $v_{n-1} = \frac{-\mu_{2} + n + 2s}{-\mu_{2} + n}$ and $u_{n-1} = \frac{\|z^{n}\|_{1}}{\|z^{n-1}\|_{1}}$. Substituting these values of $u_{n-1}$ and $v_{n-1}$ in the equation \eqref{eqn:2.1.22}, we get $s^{2} - \sigma^{2} + \sigma - s = 0$. 

\item[(iv)] For all $n \in \mathbb{Z},$ assume that $v_{n-1} = \frac{-\mu_{2} + n + 2s}{-\mu_{2} + n}$ and $u_{n-1} = \frac{\|z^{n-1}\|_{1}}{\|z^{n}\|_{1}}$. Substituting these values of $u_{n-1}$ and $v_{n-1}$ in the equation \eqref{eqn:2.1.22}, we get $s^{2} - \sigma^{2} - \sigma - s = 0$. 
\end{enumerate}
Combining (i) - (iv), again in this case, we see that $S_{1} = 0$.

\item[(c)] For $i=1,2,$ assume that $\pi_{i}=  P_{\lambda, s_{i}}$ are two  Principal series representations. We have the following four cases to consider. 
\begin{enumerate}
\item[(i)]  For all $n \in \mathbb{Z},$ assume that $u_{n-1} = \frac{-\mu_{1} + n + 2s_{1}}{- \mu_{1} + n}$ and $v_{n-1} = 1$. Substituting these values of $u_{n-1}$ and $v_{n-1}$ in the equation \eqref{eqn:2.1.22}, we get $(s_{2} - s_{1})(s_{2} + s_{1} + 1) = 0$. This is a contradiction since $s_{1} \neq s_{2}$.

\item[(ii)] For all $n \in \mathbb{Z},$ assume that $u_{n-1} = 1$ and $v_{n-1} = \frac{-\mu_{2} + n + 2s_{2}}{- \mu_{2} + n}$. Substituting these values of $u_{n-1}$ and $v_{n-1}$ in the equation \eqref{eqn:2.1.22}, we get $s_{2}^{2} - s_{1}^{2} + s_{1} - s_{2} = 0$. This is a contradiction since $s_{1} \neq s_{2}$.

\item[(iii)] For all $n \in \mathbb{Z},$ assume that $u_{n-1} = \frac{-\mu_{1} + n + 2s_{1}}{- \mu_{1} + n}$ and $v_{n-1} = \frac{-\mu_{2} + n + 2s_{2}}{- \mu_{2} + n}$. Substituting these values of $u_{n-1}$ and $v_{n-1}$ in the equation \eqref{eqn:2.1.22}, we get $s_{2}^{2} - s_{1}^{2} = s_{2} + s_{1}$. Since $s_{1} \neq s_{2}$ and both of them are purely imaginary, it follows, from the preceding equation, that $s_{2} = -s_{1}$. 


\item[(iv)] For all $n \in \mathbb{Z},$ assume that $u_{n-1} = 1$ and $v_{n-1} =1$. Substituting these values of $u_{n-1}$ and $v_{n-1}$ in the equation \eqref{eqn:2.1.22}, we get $s_{2}^{2} - s_{1}^{2} + s_{2} + s_{1} = 0$. We conclude that  $s_{2} = -s_{1}$ exactly as before.
\end{enumerate}
\end{enumerate}
The proof is complete by putting together the results of the three cases {(a) - (c)}. 
\end{proof}

\begin{prop}\label{when s not 0, if diagonals are B(s) then S = 0}
Let $P_{\lambda, s}$ be a  representation from the Principal series with $s \neq 0$.  If  $S$ is any  operator on $L^{2}(\mathbb{T})$ such that 
\begin{equation}\label{eqn:2.1.10}
SP_{\lambda, s}(\phi) - e^{i \theta} P_{\lambda, s}(\phi) S = \overline{a} B(s) P_{\lambda, s}(\phi) S + \overline{a} S P_{\lambda, s}(\phi) B(s),\,\, \phi \in \mbox{M\"{o}b}, 
\end{equation}
then $S = 0$.
\end{prop}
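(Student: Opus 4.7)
The plan is to reduce the equation in \eqref{eqn:2.1.10} to the one already handled by Proposition~\ref{Finding S when s=0}(a) via a single unitary conjugation, exploiting the intertwining properties of $U_{\lambda,s}$ recorded just after Theorem~\ref{inequivalance within principal series rep}. Specifically, recall that the diagonal unitary defined by $U_{\lambda,s} z^n = \frac{\Gamma(n+(1+\lambda)/2 - s)}{\Gamma(n+(1+\lambda)/2 + s)} z^n$ satisfies
$$U_{\lambda,s}\, B(s)\, U_{\lambda,s}^{*} = B \quad\text{and}\quad U_{\lambda,s}\, P_{\lambda,s}(\phi)\, U_{\lambda,s}^{*} = P_{\lambda,-s}(\phi), \quad \phi \in \mbox{M\"ob}.$$

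Set $\tilde S := U_{\lambda,s}\, S\, U_{\lambda,s}^{*}$. Multiplying \eqref{eqn:2.1.10} on the left by $U_{\lambda,s}$ and on the right by $U_{\lambda,s}^{*}$, and inserting $I = U_{\lambda,s}^{*} U_{\lambda,s}$ between adjacent factors so as to apply the two intertwinings displayed above, I expect the equation to transform into
$$\tilde S\, P_{\lambda,-s}(\phi) - e^{i\theta} P_{\lambda,-s}(\phi)\, \tilde S = \bar a\, B\, P_{\lambda,-s}(\phi)\, \tilde S + \bar a\, \tilde S\, P_{\lambda,-s}(\phi)\, B,\quad \phi \in \mbox{M\"ob}.$$
This is precisely equation \eqref{eqn:2.1.6} of Proposition~\ref{Finding S when s=0} with the parameter $s$ replaced by $-s$. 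Since $s \neq 0$ forces $-s \neq 0$, part (a) of that proposition gives $\tilde S = 0$, whence $S = U_{\lambda,s}^{*}\, \tilde S\, U_{\lambda,s} = 0$.

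The only substantive point to verify is the algebraic transformation of the equation under conjugation; the key observation is that the two $B(s)$ factors in the right-hand side of \eqref{eqn:2.1.10} sit in symmetric outer positions of the $\bar a$-terms, so both get conjugated into $B$ and no cross terms survive. An alternative, more calculational route would be to mimic the proof of Proposition~\ref{Finding S}: first use appendix equation \eqref{basic eq: 3} to see that $S$ is a weighted shift $\{\alpha_n\}$ with respect to $\{z^n\}$, then compare coefficients of $r$ in \eqref{basic eq: 4} with $m = n$ to derive a recursion on $\{\alpha_n\}$. The expected obstacle there would be that the second $B(s)$ contributes an additional $s$-dependent factor (compared to the single-$B(s)$ situation of Proposition~\ref{Finding S}) which must be shown to kill the $\alpha_n = C/(\lambda + 2n + 1)$ solution that survived in the purely-$B$ setting of Proposition~\ref{Finding S when s=0}(b); the conjugation trick neatly sidesteps this bookkeeping.
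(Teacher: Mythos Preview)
Your conjugation argument is correct: the intertwining relations $U_{\lambda,s}B(s)U_{\lambda,s}^{*}=B$ and $U_{\lambda,s}P_{\lambda,s}(\phi)U_{\lambda,s}^{*}=P_{\lambda,-s}(\phi)$ are exactly those recorded after Theorem~\ref{inequivalance within principal series rep}, and the term-by-term transformation of \eqref{eqn:2.1.10} into \eqref{eqn:2.1.6} (with parameter $-s$) goes through cleanly since every factor is sandwiched by a $U_{\lambda,s}$ on the left and a $U_{\lambda,s}^{*}$ on the right.

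The paper, however, argues directly rather than by reduction. It first invokes \eqref{basic eq: 3} to see that $S$ is a weighted shift with weights $\{\alpha_n\}$, and then substitutes $m=n-1$ (not $m=n$) into \eqref{basic eq: 4}. Comparing the coefficient of $r$ in that identity yields immediately $2s\,\alpha_{n-1}=0$, so $\alpha_{n-1}=0$ for every $n$. This is the same $m=n-1$ trick used in part~(a) of Proposition~\ref{Finding S when s=0}, and it bypasses any recursion altogether. Your alternative sketch with $m=n$ would give a two-term recursion that still needs to be combined with a second relation to force vanishing; the paper's choice of $m=n-1$ gives a one-term relation and is the shortest path. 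Your conjugation route is a nice conceptual shortcut that leverages work already done, while the paper's route is self-contained and mirrors the calculation it just performed for Proposition~\ref{Finding S when s=0}(a).
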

 
\begin{proof}
From the equation \eqref{basic eq: 3}, it follows that $S$ is a weighted shift with respect to the orthonormal basis $\nolinebreak\{z^{n} : n \in \mathbb{Z}\}$ in $L^{2}(\mathbb{T})$. Let $\{\alpha_{n}\}$ be the weight sequence of $S$. 
Putting $m = n-1$ in the equation \eqref{basic eq: 4}, then comparing the coefficient of $r,$ we have $2 \alpha_{n-1} s = 0$. Since $s \neq 0,$ it follows that $\alpha_{n-1} = 0$. This implies that $S=0.$
\end{proof}

\begin{cor}\label{all homog op for principal series case when s not 0}
If $T$ is a homogeneous operator with associated representation $P_{\lambda, s} \oplus P_{\lambda, s},$ where $s \neq 0,$ then, upto unitary equivalence, $T$ must be of the form    
$$\left[\begin{array}{ccc}
                    B(s) & \alpha (B(s) - B)\\
                      0 & B
                  \end{array}\right], \left[\begin{array}{ccc}
                    B(s) & 0\\
                      0 & B(s)
                  \end{array}\right]\,\,\mbox{\rm or}\,\,  \left[\begin{array}{ccc}
                      B & 0\\
                      0 & B
                  \end{array}\right].$$              
\end{cor}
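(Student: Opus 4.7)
The strategy is to combine the three propositions just established in this section with the classification of homogeneous shifts having associated representation $P_{\lambda,s}$. Write $T = \begin{psmallmatrix} T_1 & S_1 \\ S_2 & T_2 \end{psmallmatrix}$; the homogeneity relation $\phi(T) = (P_{\lambda,s} \oplus P_{\lambda,s})(\phi)^{*} T (P_{\lambda,s} \oplus P_{\lambda,s})(\phi)$ unfolds into four block identities analogous to \eqref{eqn:2.1.17}--\eqref{eqn:2.1.20} of Theorem \ref{homog op when associated rep is direct sum of cont. series rep}, now with $\pi_{1}=\pi_{2}=P_{\lambda,s}$. Evaluating these on the rotation subgroup immediately forces each of $T_1, T_2, S_1, S_2$ to be a weighted bilateral shift on $L^{2}(\mathbb T)$ with respect to $\{z^{n}\}$.

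The first and main step is to reduce to $S_2 = 0$ via a unitary conjugation inside the commutant of $P_{\lambda,s}\oplus P_{\lambda,s}$. Since $P_{\lambda,s}$ is irreducible, Schur's Lemma identifies this commutant on $\mathbb{C}^{2}\otimes L^{2}(\mathbb T)$ with $M_{2}(\mathbb C)\otimes I$, and therefore conjugation by any $U\otimes I$ with $U\in U(2)$ preserves the associated representation. The aim is to pick $U$ that simultaneously upper-triangularizes the family of $2\times 2$ weight matrices $\bigl(\begin{smallmatrix} u_{n} & \alpha_{n}\\ \beta_{n} & v_{n}\end{smallmatrix}\bigr)$; the structural constraints coming from the involution equations should force a common one-dimensional invariant subspace for the whole family. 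This simultaneous triangularization is the chief technical obstacle.

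Once $S_{2}=0$, the diagonal equation decouples and \cite[Proposition 2.4]{CONSTCharGMBB} implies that $T_1$ and $T_2$ are each homogeneous with associated representation $P_{\lambda,s}$. By \cite[List 4.1]{Homogshift}, each diagonal entry, regarded as a homogeneous weighted shift carrying this associated representation, must coincide (up to a unitary preserving the representation) with $B$ or $B(s)$. Four sub-cases for $(T_1, T_2)$ then arise, each covered by an already-established proposition: for $(B(s), B(s))$, Proposition \ref{when s not 0, if diagonals are B(s) then S = 0} forces $S_1 = 0$, yielding $\bigl(\begin{smallmatrix} B(s) & 0 \\ 0 & B(s)\end{smallmatrix}\bigr)$; for $(B(s), B)$, Proposition \ref{Finding S} gives $S_1 = \alpha(B(s)-B)$, producing Kor\'anyi's form; $(B, B(s))$ reduces to $(B(s), B)$ via the block-permutation equivalence noted after Corollary \ref{homog S}; and for $(B, B)$ the equation satisfied by $S_1$ is precisely the one treated in Proposition \ref{Finding S when s=0} with $s\not = 0$, so part~(a) of that Proposition forces $S_1 = 0$, yielding $\bigl(\begin{smallmatrix} B & 0 \\ 0 & B\end{smallmatrix}\bigr)$.

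Hence the three stated forms exhaust all possibilities. The hard part is Step~1: without the simultaneous upper-triangularization, the equations on $S_1$ and $S_2$ are coupled in a way that does not directly match the hypotheses of Propositions \ref{Finding S}, \ref{Finding S when s=0}, or \ref{when s not 0, if diagonals are B(s) then S = 0}; once triangularity is achieved, the remainder is a routine case analysis using those propositions.
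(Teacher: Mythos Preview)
Your proposal has a genuine gap at precisely the point you flag as ``the chief technical obstacle'': you do not actually establish the simultaneous upper-triangularization. Saying the involution equations ``should force'' a common invariant line is not a proof, and restricting to unitaries in the commutant $M_2(\mathbb{C})\otimes I$ gives you no obvious mechanism for producing one. The paper fills this gap with a device you are missing entirely: instead of working inside the commutant of $P_{\lambda,s}\oplus P_{\lambda,s}$, it conjugates by $I\oplus U_{\lambda,s}$, where $U_{\lambda,s}$ is the unitary intertwining $P_{\lambda,s}$ with $P_{\lambda,-s}$ (recalled just before Theorem~\ref{Inequivalance between complementary series and principal series}). This converts $T$ into a homogeneous operator whose associated representation is $P_{\lambda,s}\oplus P_{\lambda,-s}$, i.e.\ two Continuous series with \emph{distinct} parameter labels. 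Theorem~\ref{homog op when associated rep is direct sum of cont. series rep} then applies verbatim and forces one off-diagonal block to vanish. Since $I\oplus U_{\lambda,s}$ is block-diagonal, conjugating back preserves the upper-triangular shape, so $T$ itself is upper triangular. The key point is that the conjugating unitary is \emph{not} in the commutant of $P_{\lambda,s}\oplus P_{\lambda,s}$; it deliberately changes the associated representation to land in the already-analysed unequal-parameter case.

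Once triangularity is obtained, your case analysis via \cite[Proposition 2.4]{CONSTCharGMBB} and Propositions~\ref{Finding S}, \ref{Finding S when s=0}(a), and \ref{when s not 0, if diagonals are B(s) then S = 0} is exactly what the paper does, and that part of your argument is correct. The missing ingredient is the $I\oplus U_{\lambda,s}$ trick, which replaces your unproved simultaneous-triangularization step by a one-line reduction to Theorem~\ref{homog op when associated rep is direct sum of cont. series rep}.
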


\begin{proof}
Let $T$ be a homogeneous operator with associated representation $P_{\lambda, s} \oplus P_{\lambda, s}$. Recall that $P_{\lambda, s}$ and $P_{\lambda, -s}$ are unitarily equivalent via the unitary operator $U_{\lambda, s}.$  Clearly, the operator $\left(I \oplus U_{\lambda, s} \right) T \left(I \oplus U_{\lambda, s}^* \right)$ is homogeneous with associated representation $P_{\lambda, s} \oplus P_{\lambda, -s}$. Then Theorem \ref{homog op when associated rep is direct sum of cont. series rep} implies that $\left(I \oplus U_{\lambda, s} \right) T \left(I \oplus U_{\lambda, s}^* \right)$ is of the form $\left[\begin{array}{ccc}
                      \tilde{T_1} & \tilde{S_1}\\
                      0 & \tilde{T_2}
                  \end{array}\right]$ on $L^{2}(\mathbb{T}) \oplus L^{2}(\mathbb{T}),$ therefore the operator $T$ is also of the form $\left[\begin{array}{ccc}
                      T_1 & S_1\\
                      0 & T_2
                  \end{array}\right]$ on $L^{2}(\mathbb{T}) \oplus L^{2}(\mathbb{T})$. Now \cite[Proposition 2.4]{CONSTCharGMBB} and \cite[Lemma 2.5]{CONSTCharGMBB} imply that $T_1$ and $T_2$ are homogeneous operators with associated representation $P_{\lambda, s}$ and $S$ satisfies
\begin{equation*}
S\pi_{\lambda, s}(\phi) - e^{i \theta} \pi_{\lambda, s}(\phi) S = \overline{a}T_{1}\pi_{\lambda, s}(\phi)S + \overline{a} S \pi_{\lambda, s}(\phi) T_{2}.
\end{equation*} 
Since $B(s)$ and $B$ are the only homogeneous operators with associated representation $P_{\lambda, s}$, the proof is complete applying Proposition \ref{Finding S}, Proposition \ref{Finding S when s=0} and Proposition \ref{when s not 0, if diagonals are B(s) then S = 0}.
\end{proof}

Now we characterize all homogeneous operators whose associated representation is $P_{\lambda, 0} \oplus P_{\lambda, 0}$ with $\lambda \neq 1$.

Let $\sigma = P_{\lambda, 0} \oplus P_{\lambda, 0}.$  For all $i, j \in \mathbb{Z},$ let $\sigma_{i,j} = P_{i} \sigma|_{H(j)}$ where $P_{i}$ is the orthogonal projection of $L^2(\mathbb{T}) \oplus L^2(\mathbb{T})$ onto $H(i),$ the $\mathbb K$-isotypic subspace of $\sigma$ as in Theorem \ref{Irrducibility of operators corresponding to principal series rep}. Then $\sigma_{i,j}$ is a map from $H(j)$ to $H(i),$ $i,j\in \mathbb Z.$  Let $P_{\lambda, 0}^{i,j}$ be the map from the subspace of $L^2(\mathbb{T})$ spanned by the vector $\left\lbrace z^j \right\rbrace$ to the subspace of $L^2(\mathbb{T})$ spanned by the vector $\left\lbrace z^i \right\rbrace$ defined by $P_{\lambda, 0}^{i,j} (z^{j}) =  \left\langle P_{\lambda, 0}^{i,j} z^{j}, z^{i} \right\rangle z^{i}$. Then 
\begin{equation}\label{eqn:2.1.1.1.1}
\sigma _{i,j}(\phi) \binom{a z^{j}}{b z^{j}} = \left\langle P_{\lambda, 0}(\phi) z^{j}, z^{i}\right\rangle \binom{a z^{i}}{b z^{i}},
\end{equation}
for all $a, b \in \mathbb{C}$. Recall that the matrix coefficient of $P_{\lambda, 0}$ is
\begin{equation}
\left\langle P_{\lambda, 0}(\phi_{a})z^{m}, z^{n} \right\rangle = c (-1)^{n} (\overline{a})^{n - m} \sum_{k \geq (m - n)^{+}} C_{k}(m, n) r^{k},
\end{equation}
where $r = |a|^{2}$, $c = \phi_{a}^{'}(0)^{\lambda/2} |\phi_{a}^{'}(0)|^{\mu}$ and 
$C_{k}(m, n) = \left(\begin{array}{cc}
 -\lambda - \mu - m\\
 k + n -m
\end{array}\right)\left(\begin{array}{cc}
 - \mu + m\\
 k 
\end{array}\right).$ 
\begin{defn}\label{zero set of matrix coefficient}
Let $A_{m,n}$ be the subset of the interval $(-1, 1),$ which contains all zeros of the power series $\displaystyle \sum_{k \geq (m - n)^{+}} C_{k}(m, n) r^{k}.$ 
\end{defn}
Since for every $n, m \in \mathbb{Z}$, the radius of convergence of the power series $\displaystyle \sum_{k \geq (m - n)^{+}} C_{k}(m, n) r^{k}$ is $1,$ it follows $A_{m,n}$ is countable. Thus the set $A = \displaystyle \bigcup_{m,n \in \mathbb{Z}} A_{m,n}$ is also countable. Therefore, there exists $b \in (0, 1) \setminus A$ such that 
$\left\langle P_{\lambda, 0}(\phi_b)z^{m}, z^{n} \right\rangle \neq 0$, for all $n, m \in \mathbb{Z}.$ In the following, we fix this $\phi_b$ and  let $e_n$ denote the function $z^n$. 

Now assume that $u_{0}, v_{0}$ are two non-zero mutually orthogonal vectors in $H(0)$. Define $u_{n} = \sigma_{n,0}(\phi_b) u_{0}$, $v_{n} = \sigma_{n,0}(\phi_b) v_{0}$ for all $n \neq   0$. 
Then each of the vectors $u_{n}, v_{n}$ are non-zero.

\begin{lem}\label{u_n, v_n form an orthogonal basis}
The set of vectors $\left\lbrace u_{n}, v_{n}\right\rbrace_{n \in \mathbb{Z}}$ is a complete orthogonal set of $L^2(\mathbb{T}) \oplus L^2(\mathbb{T})$.
\end{lem}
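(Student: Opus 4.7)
The plan is to apply the explicit formula (\ref{eqn:2.1.1.1.1}) to compute $u_n$ and $v_n$ in coordinates, and then verify orthogonality and completeness directly from these expressions. First I would write the initial vectors as $u_0 = \binom{\alpha z^{0}}{\beta z^{0}}$ and $v_0 = \binom{\gamma z^{0}}{\delta z^{0}}$, so that the orthogonality hypothesis $\langle u_0, v_0\rangle = 0$ becomes $\alpha\overline{\gamma} + \beta\overline{\delta} = 0$. Applying (\ref{eqn:2.1.1.1.1}) with $i = n,\, j = 0,\, \phi = \phi_b$ immediately gives, for every $n \neq 0$,
$$u_n = \lambda_n \binom{\alpha z^{n}}{\beta z^{n}}, \qquad v_n = \lambda_n \binom{\gamma z^{n}}{\delta z^{n}},$$
where $\lambda_n := \langle P_{\lambda,0}(\phi_b) z^{0}, z^{n}\rangle$. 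By the choice of $b \in (0,1)\setminus A$ made just before the lemma, each $\lambda_n$ is non-zero, and hence every $u_n, v_n$ is a non-zero vector.

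Next I would verify orthogonality in two cases. For distinct indices $n \neq m$, both $u_n, v_n$ lie in $H(n) = \mbox{span}\{(z^{n}, 0), (0, z^{n})\}$ while $u_m, v_m$ lie in $H(m)$, and since $\{z^{n}\}_{n\in \mathbb Z}$ is an orthonormal basis of $L^{2}(\mathbb{T})$, the subspaces $H(n)$ and $H(m)$ are mutually orthogonal; this handles all cross-index inner products at once. For the same index $n \neq 0$, a short direct computation yields
$$\langle u_n, v_n\rangle = |\lambda_n|^{2} \bigl(\alpha\overline{\gamma} + \beta\overline{\delta}\bigr) \|z^{n}\|_{L^{2}(\mathbb{T})}^{2} = 0,$$
again by the orthonormality of $\{z^{n}\}$, and for $n = 0$ this is simply the hypothesis $\langle u_0, v_0\rangle = 0$.

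For completeness, I would observe that the non-zero orthogonal pair $\{u_n, v_n\}$ is automatically a basis of the two-dimensional subspace $H(n)$ for every $n \in \mathbb Z$. Since $L^{2}(\mathbb{T}) \oplus L^{2}(\mathbb{T})$ decomposes as the Hilbert-space orthogonal direct sum $\bigoplus_{n \in \mathbb Z} H(n)$, the family $\{u_n, v_n\}_{n\in\mathbb Z}$ has dense linear span and is therefore a complete orthogonal system. The only delicate step—ensuring that all the matrix coefficients $\lambda_n$ are simultaneously non-zero—was already taken care of before the lemma by the countability of the zero set $A = \bigcup_{m,n} A_{m,n}$, so no serious obstacle remains in the argument itself.
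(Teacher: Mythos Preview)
Your proof is correct and follows essentially the same route as the paper's: both use that $u_n, v_n \in H(n)$ with the $H(n)$ mutually orthogonal, and both reduce $\langle u_n, v_n\rangle$ to $|\lambda_n|^2 \langle u_0, v_0\rangle = 0$ before invoking $\bigoplus_n H(n) = L^2(\mathbb T)\oplus L^2(\mathbb T)$ for completeness. The only cosmetic difference is that the paper packages the same-index orthogonality as the identity $\sigma_{n,0}(\phi_b)^{*}\sigma_{n,0}(\phi_b) = |\langle P_{\lambda,0}(\phi_b)e_0,e_n\rangle|^2\,\mathrm{Id}$, whereas you unwind this in coordinates via~(\ref{eqn:2.1.1.1.1}); the content is identical.
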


\begin{proof}
As $u_{n}, v_{n} \in H(n)$ for every $n \in \mathbb{Z}$ and $H(n)$ is orthogonal to $H(m)$, so $\{u_{n}, v_{n}\}$ is orthogonal to $\{u_{m}, v_{m}\}$, if $ n \neq m$. Now we show that $u_{n}$ is  orthogonal to $v_{n},$ $n\in \mathbb Z.$  From the definition of  $\sigma_{n,0}(\phi_b): H(0) \to H(n)$ obtained from \eqref{eqn:2.1.1.1.1}
and a similar one for $\nolinebreak\sigma_{n,0}(\phi_b)^{*}: H(n) \to H(0),$ we have
$$\sigma_{n,0}(\phi_b)^{*} \sigma_{n,0}(\phi_b) = |\left\langle P_{\lambda, 0}(\phi_b)e_{0}, e_{n} \right\rangle|^{2} Id.$$ Consequently, 
\begin{flalign*}
\left\langle u_{n}, v_{n} \right\rangle &= \left\langle \sigma_{n,0}(\phi_b)u_{0}, \sigma_{n,0}(\phi_b)v_{0}\right\rangle \\
&= \left\langle \sigma_{n,0}(\phi_b)^{*} \sigma_{n,0}(\phi_b)u_{0}, v_{0} \right\rangle\\
&= |\left\langle P_{\lambda, 0}(\phi_b)e_{0}, e_{n} \right\rangle|^{2} \left\langle u_{0}, v_{0} \right\rangle = 0.
\end{flalign*}
Since $H(n)$ is spanned by $\left\lbrace u_{n}, v_{n}\right\rbrace$ and $L^2(\mathbb{T}) \oplus L^2(\mathbb{T}) = \displaystyle \oplus_{n \in \mathbb{Z}} H(n)$, it follows that $\left\lbrace u_{n}, v_{n}\right\rbrace_{n \in \mathbb{Z}}$ is a complete orthogonal set.
\end{proof}

Now let $H_{1}$ be the subspace of $L^2(\mathbb{T}) \oplus L^2(\mathbb{T})$ spanned by the set of vectors $\left\lbrace u_{n} \right\rbrace_{n \in \mathbb{Z}}$ and $H_2$ be the subspace of $L^2(\mathbb{T}) \oplus L^2(\mathbb{T})$ spanned by the set of vectors $\left\lbrace v_{n} \right\rbrace_{n \in \mathbb{Z}}$.

\begin{lem}\label{H_1, H_2 are invariant under sigma}
The subspaces $H_{1}$ and $H_2$ are invariant under $\sigma$. Moreover, $\sigma_{| H_{i}}$ is equivalent to $P_{\lambda, 0}$ for $i = 1, 2$.
\end{lem}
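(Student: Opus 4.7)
The plan is to exploit the explicit form of $\sigma_{n,0}(\phi_b)$ given in \eqref{eqn:2.1.1.1.1}: it sends $\binom{az^0}{bz^0}$ to the scalar multiple $\langle P_{\lambda,0}(\phi_b)z^0, z^n\rangle \binom{az^n}{bz^n}$. Thus $\sigma_{n,0}(\phi_b)$ preserves the $\mathbb{C}^2$-direction of a vector in $H(0)$, only changing the monomial $z^0$ to $z^n$ and scaling by a matrix coefficient.

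Write $u_0 = \binom{\alpha z^0}{\beta z^0}$ with $(\alpha,\beta)\neq 0$ and $v_0 = \binom{\gamma z^0}{\delta z^0}$ with $(\gamma,\delta)\neq 0$ and $\alpha\bar\gamma + \beta\bar\delta = 0$. Then by the formula above, $u_n = c_n\binom{\alpha z^n}{\beta z^n}$ and $v_n = d_n\binom{\gamma z^n}{\delta z^n}$, where $c_n = \langle P_{\lambda,0}(\phi_b)z^0,z^n\rangle$ and $d_n$ is the analogous matrix coefficient; by our choice of $b \notin A$, both $c_n$ and $d_n$ are nonzero for every $n \in \mathbb{Z}$. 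Consequently,
\[
H_1 \;=\; \overline{\mathrm{span}}\bigl\{\tbinom{\alpha z^n}{\beta z^n} : n\in\mathbb Z\bigr\} \;=\; \bigl\{\tbinom{\alpha f}{\beta f} : f\in L^2(\mathbb T)\bigr\},
\]
and similarly $H_2 = \{\binom{\gamma f}{\delta f} : f \in L^2(\mathbb T)\}$.

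From this concrete description, invariance is immediate: for any $\phi\in\mbox{M\"ob}$,
\[
\sigma(\phi)\tbinom{\alpha f}{\beta f} \;=\; \tbinom{\alpha P_{\lambda,0}(\phi)f}{\beta P_{\lambda,0}(\phi)f} \;\in\; H_1,
\]
and analogously for $H_2$. For the equivalence, define the unitary $U_1 : L^2(\mathbb T) \to H_1$ by $U_1 f = (|\alpha|^2+|\beta|^2)^{-1/2}\binom{\alpha f}{\beta f}$ (and $U_2$ analogously). Then the preceding displayed identity reads $\sigma(\phi)|_{H_1} = U_1 P_{\lambda,0}(\phi) U_1^*$, which is exactly the statement that $\sigma|_{H_1}$ is unitarily equivalent to $P_{\lambda,0}$.

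The only substantive point is identifying $H_1$ with $\{\binom{\alpha f}{\beta f}: f\in L^2(\mathbb T)\}$, which rests on two facts already established: the scalar form of $\sigma_{n,0}(\phi_b)$ from \eqref{eqn:2.1.1.1.1}, and the non-vanishing of all matrix coefficients $c_n$ (guaranteed by choosing $b\notin A$ in Definition \ref{zero set of matrix coefficient}). Everything else is a direct verification, and no delicate estimate is required.
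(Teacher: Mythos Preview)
Your proof is correct and takes a more direct route than the paper's. The key observation you make is that, by \eqref{eqn:2.1.1.1.1}, each $u_n$ is a nonzero scalar multiple of $\binom{\alpha z^n}{\beta z^n}$, so $H_1$ is concretely the ``diagonal'' subspace $\{\binom{\alpha f}{\beta f}: f\in L^2(\mathbb T)\}$; invariance under $\sigma=P_{\lambda,0}\oplus P_{\lambda,0}$ and the equivalence with $P_{\lambda,0}$ are then immediate from linearity. The paper instead argues more intrinsically at the level of the block maps $\sigma_{i,j}$: it shows that $\sigma_{i,j}(\psi)$ is a scalar multiple of $\sigma_{i,j}(\phi_b)$, and that $\sigma_{i,j}(\phi_b)u_j$ is a scalar multiple of $u_i$ (using $\sigma_{0,j}(\phi_b)\sigma_{j,0}(\phi_b)=c\cdot\mathrm{Id}$), then computes matrix coefficients in the basis $\{\hat u_i\}$ to establish the equivalence. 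Your approach is shorter and more transparent in this concrete situation; the paper's approach avoids writing down the explicit $\mathbb C^2$-direction and would adapt more readily to settings where such an explicit description of the vectors is unavailable. One minor remark: your $c_n$ and $d_n$ are in fact the same scalar $\langle P_{\lambda,0}(\phi_b)z^0,z^n\rangle$, since $\sigma_{n,0}(\phi_b)$ acts by the same matrix coefficient on every vector of $H(0)$.
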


\begin{proof}
Pick a $\psi $ in  M\"{o}b. For all  $i, j$,  note that $\left\langle P_{\lambda, 0}(\phi_b)z^j, z^{i} \right\rangle \neq 0,$ 
and 
\begin{equation*}
\sigma_{i,j}(\psi) = \frac{\left\langle  P_{\lambda, 0}(\psi)e_{j}, e_{i} \right\rangle}{\left\langle  P_{\lambda, 0}(\phi_b)e_{j}, e_{i} \right\rangle} \sigma_{i,j}(\phi_b). 
\end{equation*}
Since 
\begin{equation*}
\sigma_{0,j}(\phi_b)\sigma_{j,0}(\phi_b) = \left\langle P_{\lambda, 0}(\phi_b)e_{0}, e_{j}\right\rangle \left\langle P_{\lambda, 0}(\phi_b)e_{j}, e_{0}\right\rangle \mathrm{Id}, 
\end{equation*}
it follows that $\sigma_{0,j}(\phi_b)u_{j}$ is in the span of $\{u_{0}\}$. Therefore,  
$$\sigma_{i,j}(\phi_b) u_{j} = \frac{\left\langle P_{\lambda, 0}(\phi_b)e_{j}, e_{i}\right\rangle}{\left\langle P_{\lambda, 0}(\phi_b)e_{j}, e_{0}\right\rangle \left\langle  P_{\lambda, 0}(\phi_b)e_{0}, e_{i}\right\rangle} \sigma_{i,0}(\phi_b) \sigma_{0,j}(\phi_b) u_{j}$$
is a scalar multiple of $u_{i}$. This implies that 
$$\sigma_{i,j}(\psi)u_{j} = \frac{\left\langle P_{\lambda, 0}(\psi)e_{j}, e_{i} \right\rangle}{\left\langle P_{\lambda, 0}(\phi_b)e_{j}, e_{i} \right\rangle} \sigma_{i,j}(\phi_b)u_{j}$$
is a scalar multiple of $u_{i}$. We conclude that $\sigma (\psi) u_{j}$ = $\sum _{i \in \mathbb{Z}} \sigma_{i,j}(\psi)u_{j}$ is in $H_{1},$ proving that $H_{1}$ is invariant under $\sigma$.  A similar argument shows that $H_{2}$ is invariant under $\sigma$.

Let $t_{n} \in \mathbb{R}$ be such that $\left\langle P_{\lambda, 0}(\phi_b)e_{0}, e_{n} \right\rangle$ = $e^{it_{n}}$ $|\left\langle P_{\lambda, 0}(\phi_b)e_{0}, e_{n} \right\rangle|$. Now if $\psi$ is any element in M\"{o}b, then 
\begin{multline*}
\left\langle \sigma(\psi)u_{j}, u_{i} \right\rangle = \left\langle \sigma_{i,j}(\psi)u_{j}, u_{i} \right\rangle = \left\langle P_{\lambda, 0}(\psi)e_{j}, e_{i}\right\rangle \left\langle P_{\lambda, 0}(\phi_b)e_{0}, e_{j} \right\rangle \overline{\left\langle P_{\lambda, 0}(\phi_b)e_{0}, e_{i} \right\rangle} \|u_{0}\|^{2}\\
= \left\langle P_{\lambda, 0}(\psi)e_{j}, e_{i} \right\rangle e^{it_{j}}\,\,|\left\langle P_{\lambda, 0}(\phi_b)e_{0}, e_{j}\right\rangle|\,\, e^{-it_{i}}\,\,|\left\langle P_{\lambda, 0}(\phi_b)e_{0}, e_{i} \right\rangle| \|u_{0}\|^{2}. 
\end{multline*}
Find $a, b \in \mathbb{C}$ such that $u_{0} = \binom{ae_{0}}{be_{0}}.$ Note that  
$$u_{n} = \sigma_{n,0}(\phi_b)u_{0} = \left\langle P_{\lambda, 0}(\phi_b)e_{0}, e_{n} \right\rangle \binom{ae_{n}}{be_{n}}\,\,\mbox{and, therefore,}\,\,  \|u_{n}\| = |\left\langle P_{\lambda, 0}(\phi_b)e_{0}, e_{n}\right\rangle| \|u_{0}\|.$$
The set of vectors $\{\hat{u}_{i}\},$  $\hat{u}_{i} = e^{-it_{i}}\frac{u_{i}}{\|u_{i}\|},$ is an orthonormal basis of $H_{1}$. From the preceding computation,  we see that  $\left\langle \sigma(\psi)\hat{u}_{j}, \hat{u}_{i} \right\rangle$ = $\left\langle P_{\lambda, 0}(\psi)e_{j}, e_{i} \right\rangle.$ It is now evident that $\sigma_{|H_{1}}$ is equivalent to  $P_{\lambda, 0}$. Similarly, it can be seen that $\sigma_{|H_2}$ is equivalent to $P_{\lambda, 0}$.
\end{proof}

Suppose $T$ is a homogeneous operator with associated representation $\sigma$. 
Since $H(n)$ is a $\mathbb{K}$-isotypic subspace of $\sigma$ and $\sigma$ is associated with $T$, therefore, we have $T (H(n)) \subseteq H(n+1)$ (\cite[Theorem 5.1]{Homogshift}). Let $T_{n}$ $:=$ $T_{|H(n)}$. We first prove that each $T_{n}$ is invertible.

\begin{lem}\label{invertibility of blocks}
For every $n \in \mathbb{Z}$, the operator $T_{n}$ is invertible.
\end{lem}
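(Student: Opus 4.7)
The plan is to argue by contradiction, leveraging homogeneity to produce a one-parameter family of $T$-eigenvectors from a kernel vector of $T_n$ and then ruling them out using the explicit matrix coefficients of $P_{\lambda,0}$.

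Suppose $T_n$ fails to be invertible for some $n$. Since $\dim H(n) = \dim H(n+1) = 2$, this yields a non-zero $h \in H(n)$ with $Th = 0$. Because $\sigma(T)\subseteq\overline{\mathbb{D}}$, the operator $(I-\bar{a}T)^{-1}$ exists for every $a \in \mathbb{D}$, so $\phi_a(T)=(T-aI)(I-\bar{a}T)^{-1}$ is well-defined; from $Th=0$ one has $(I-\bar{a}T)h=h$ and hence $\phi_a(T)h=-ah$. Combining this with the intertwining identity $T\sigma(\phi_a)=\sigma(\phi_a)\phi_a(T)$ produces
$$T\bigl(\sigma(\phi_a)h\bigr) \;=\; -a\,\sigma(\phi_a)h, \qquad a \in \mathbb{D},$$
so $\sigma(\phi_a)h$ is a $T$-eigenvector with eigenvalue $-a$ for every $a \in \mathbb{D}$.

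Next I would expand this family in the orthonormal basis $\{\hat{u}_m,\hat{v}_m\}_{m\in\mathbb{Z}}$ of $L^{2}(\mathbb{T})\oplus L^{2}(\mathbb{T})$ produced by Lemmas~\ref{u_n, v_n form an orthogonal basis} and \ref{H_1, H_2 are invariant under sigma}. Writing $h=\alpha\hat{u}_n+\beta\hat{v}_n$ with $\vec{h}:=(\alpha,\beta)^{T}\neq 0$, and recalling from Lemma~\ref{H_1, H_2 are invariant under sigma} that $\sigma$ acts on each of $H_1$ and $H_2$ as $P_{\lambda,0}$, one gets
$$\sigma(\phi_a) h \;=\; \sum_{m\in\mathbb{Z}}\gamma_{m,n}(\phi_a)\,(\alpha\hat{u}_m+\beta\hat{v}_m),\qquad \gamma_{i,j}(\psi):=\bigl\langle P_{\lambda,0}(\psi)z^{j},\,z^{i}\bigr\rangle.$$
Let $M_m$ denote the matrix of $T_m$ in the bases $\{\hat{u}_m,\hat{v}_m\}$ and $\{\hat{u}_{m+1},\hat{v}_{m+1}\}$. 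Projecting the eigenvector equation onto $H(m+1)$ yields, for every $m\in\mathbb{Z}$ and $a\in\mathbb{D}$,
$$\gamma_{m,n}(\phi_a)\,M_m\vec{h} \;=\; -a\,\gamma_{m+1,n}(\phi_a)\,\vec{h}.$$

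The decisive step will be to fix any $m \geq n$ and invoke the expansion of $\gamma_{m,n}(\phi_a)$ recorded just before Definition~\ref{zero set of matrix coefficient}: the prefactors $c(a)$ and $\bar{a}^{m-n}$ cancel in the ratio, leaving
$$\mu_m(a) \;:=\; -a\,\frac{\gamma_{m+1,n}(\phi_a)}{\gamma_{m,n}(\phi_a)} \;=\; r\cdot\frac{\sum_{k\geq 0}C_k(n,m+1)\,r^{k}}{\sum_{k\geq 0}C_k(n,m)\,r^{k}},\qquad r = |a|^{2}.$$
On the non-empty open set where $\gamma_{m,n}(\phi_a)\neq 0$, the previous displayed identity forces $\vec{h}$ to be an eigenvector of the fixed matrix $M_m$ with eigenvalue $\mu_m(a)$, so $\mu_m(a)$ must be constant in $a$. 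However $\mu_m(0)=0$ while the coefficient of $r$ is $C_0(n,m+1)/C_0(n,m)=\bigl(-(1+\lambda)/2-m\bigr)/(m+1-n)$, which is non-zero for every $m\geq n$ whenever $\lambda\in(-1,1)\setminus\{1\}$, since $-(1+\lambda)/2\in(-1,0)$ is then not an integer. The resulting contradiction shows $\ker T_n=\{0\}$, and injectivity of $T_n$ between two-dimensional spaces gives invertibility. The principal technical point to check carefully is the non-vanishing of the relevant binomial coefficients, but as indicated this reduces immediately to the hypothesis $\lambda\neq 1$.
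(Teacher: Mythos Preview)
Your argument is correct, but it is considerably more elaborate than what the paper does, and in fact it overlooks the simplest instance of your own identity. The paper proceeds directly: from $\phi(T)=\sigma(\phi)^{*}T\sigma(\phi)$ one extracts the block relation
\[
e^{i\theta}\sigma_{n+1,n+1}(\psi)\,T_n - a\,e^{i\theta}\sigma_{n+1,n}(\psi) \;=\; T_n\,\sigma_{n,n}(\psi) - \bar a\,T_n\,\sigma_{n,n+1}(\psi)\,T_n,
\]
and then evaluates at $\psi=\phi_b$, the element fixed earlier so that every matrix coefficient $\langle P_{\lambda,0}(\phi_b)z^{j},z^{i}\rangle$ is non-zero. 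Applying both sides to any $h_n$ with $T_n h_n=0$ collapses the identity to $b\,\sigma_{n+1,n}(\phi_b)h_n=0$, and since $\sigma_{n+1,n}(\phi_b)$ is a non-zero scalar times the identity on $H(n)$, this forces $h_n=0$. That is the entire proof.

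Your route through the eigenvector family $\sigma(\phi_a)h$ and the ratio $\mu_m(a)$ reaches the same conclusion, but you are working much harder than necessary. Look at your own displayed identity $\gamma_{m,n}(\phi_a)\,M_m\vec h = -a\,\gamma_{m+1,n}(\phi_a)\,\vec h$ with $m=n$: the left side is zero because $M_n\vec h=0$ by hypothesis, so $-a\,\gamma_{n+1,n}(\phi_a)\,\vec h=0$, and taking any $a$ with $\gamma_{n+1,n}(\phi_a)\neq 0$ gives $\vec h=0$ immediately. This is precisely the paper's computation, expressed in your basis. The excursion to general $m>n$, the Taylor expansion of $\mu_m$ in $r$, and the non-vanishing of $C_0(n,m+1)/C_0(n,m)$ are all correct but superfluous.
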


\begin{proof}
Let $\psi(z) = e^{i\theta}\frac{z - a}{1 - \overline{a}z}.$  The homogeneity of $T$ implies that
$$e^{i\theta}\sigma(\psi)T - ae^{i\theta}\sigma(\psi) = T\sigma(\psi) - \overline{a} T \sigma(\psi) T.$$ From this equation, using the orthogonality of the subspaces  $H(n),$ we have 
\begin{equation}\label{eqn:2.1.25}
e^{i\theta}\sigma_{i+1, n+1}(\psi) T_{n} - ae^{i\theta}\sigma_{i+1,n}(\psi) = T_{i} \sigma_{i,n}(\psi) - \overline{a}T_{i}\sigma_{i, n+1}(\psi)T_{n} 
\end{equation}
for all $i, n \in \mathbb{Z}$.

For all $i, j \in \mathbb{Z},$ the operator $\sigma_{i, j}(\phi_b)$ is invertible. Substituting $i = n$ and $\psi=\phi_b$ in the equation \eqref{eqn:2.1.25}, we get
\begin{equation*}
b\sigma_{n+1,n}(\phi_b) + \sigma_{n+1, n+1}(\phi_b) T_{n} = T_{n} \sigma_{n,n}(\phi_b) - \overline{b}T_{n}\sigma_{n, n+1}(\phi_b)T_{n}. 
\end{equation*}
If there exists $h_{n} \in H(n)$ such that $T_{n} h_{n}= 0$, then from the  equation appearing above, we have 
\begin{equation*}
b\sigma_{n+1, n}(\phi_b)h_{n} = 0 
\end{equation*}
and consequently, $h_{n} = 0$. This proves that $T_{n}$ is invertible.
\end{proof}

\begin{thm}\label{exsistence of simultaneous invariant subspace of T and sigma}
Suppose $T$ is a homogeneous operator with associated representation $\sigma$. Then there exists $H_{1}$ and $H_2$ such that $L^2(\mathbb{T}) \oplus L^2(\mathbb{T}) = H_{1} \oplus H_{2}$, $T (H_{1}) \subseteq H_{1}.$  The subspaces $H_{i},$ $i = 1, 2,$ are invariant under $\sigma$ and $\sigma_{|H_{i}}$ is unitarily equivalent to $P_{\lambda, 0}.$ 
\end{thm}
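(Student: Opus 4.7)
The plan is to extract from the homogeneity hypothesis a single one-dimensional common invariant line $\ell\subset H(0)$ for the family of block maps $T_n:H(n)\to H(n+1)$, and then to invoke Lemmas~\ref{u_n, v_n form an orthogonal basis} and~\ref{H_1, H_2 are invariant under sigma} with $u_0\in\ell\setminus\{0\}$ and any $v_0\in H(0)\cap u_0^{\perp}\setminus\{0\}$.

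The starting point is to identify each $2$-dimensional $\mathbb{K}$-isotypic subspace $H(n)$ with $\mathbb{C}^{2}$ via its orthonormal basis $\bigl\{\binom{z^{n}}{0},\binom{0}{z^{n}}\bigr\}$. Under this identification Lemma~\ref{invertibility of blocks} shows that $T_n$ becomes an invertible $2\times 2$ matrix $\widetilde{T}_n$; a direct computation from the definition $\sigma=P_{\lambda,0}\oplus P_{\lambda,0}$ shows that $\sigma_{i,j}(\psi)$ collapses to the scalar $c_{i,j}(\psi):=\langle P_{\lambda,0}(\psi)e_j,e_i\rangle$ times the identity on $\mathbb{C}^{2}$. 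Substituting these into the identity $e^{i\theta}\sigma_{i+1,n+1}(\psi)T_n - a e^{i\theta}\sigma_{i+1,n}(\psi) = T_i\sigma_{i,n}(\psi)-\overline{a}T_i\sigma_{i,n+1}(\psi)T_n$ from the proof of Lemma~\ref{invertibility of blocks}, it becomes the purely matricial relation
$$\widetilde{T}_i \bigl(c_{i,n}(\psi) I - \overline{a}\,c_{i,n+1}(\psi)\widetilde{T}_n\bigr) = e^{i\theta}\bigl(c_{i+1,n+1}(\psi)\widetilde{T}_n - a\,c_{i+1,n}(\psi) I\bigr),\qquad \psi=\phi_{\theta,a},$$
valid for every $i,n\in\mathbb{Z}$ and every $\psi\in\mbox{M\"ob}$.

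The central step---and the main obstacle---is to read off from this relation a line common to every $\widetilde{T}_n$. If $v\in\mathbb{C}^{2}$ satisfies $\widetilde{T}_n v=\mu v$, then applying both sides to $v$ yields
$$\bigl(c_{i,n}(\psi) - \overline{a}\,c_{i,n+1}(\psi)\mu\bigr)\widetilde{T}_i v = e^{i\theta}\bigl(c_{i+1,n+1}(\psi)\mu - a\,c_{i+1,n}(\psi)\bigr) v.$$
For each $(i,n)$ the scalar on the left is a non-trivial real-analytic function of $\psi$, and hence can be made non-zero by choosing $\psi=\phi_{b'}$ for all but countably many $b'\in(-1,1)$ (the bad values being controlled by the finitely many eigenvalues of $\widetilde{T}_0$). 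Picking such a $\psi$ forces $v$ to be an eigenvector of $\widetilde{T}_i$ as well. Fixing $n=0$ and letting $u_0\in H(0)$ be any non-zero eigenvector of $\widetilde{T}_0$ (which exists since $\widetilde{T}_0$ is a $2\times 2$ matrix over $\mathbb{C}$), this argument produces the required common invariant line $\ell=\mathbb{C}u_0$ for every $\widetilde{T}_i$.

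With $u_0$ so chosen and any $v_0\in H(0)\cap u_0^{\perp}\setminus\{0\}$, Lemmas~\ref{u_n, v_n form an orthogonal basis} and~\ref{H_1, H_2 are invariant under sigma} yield closed orthogonal subspaces $H_1=\overline{\mathrm{span}}\{u_n\}_{n\in\mathbb{Z}}$ and $H_2=\overline{\mathrm{span}}\{v_n\}_{n\in\mathbb{Z}}$ with $L^{2}(\mathbb{T})\oplus L^{2}(\mathbb{T})=H_1\oplus H_2$, both $\sigma$-invariant, each carrying a copy of $P_{\lambda,0}$. The remaining claim $T(H_1)\subseteq H_1$ is automatic from the common eigenvector property: under the identification, $u_n=\sigma_{n,0}(\phi_b) u_0$ corresponds to $c_{n,0}(\phi_b) u_0\in\mathbb{C}^{2}$, so $T_n u_n$ corresponds to $c_{n,0}(\phi_b)\widetilde{T}_n u_0 = c_{n,0}(\phi_b)\mu_n u_0$, which is a scalar multiple of $u_{n+1}$ in $H(n+1)$.
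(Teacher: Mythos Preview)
Your overall strategy coincides with the paper's: both pick $u_0\in H(0)$ as an eigenvector of (a scalar multiple of) the block $T_0$, build $H_1,H_2$ via Lemmas~\ref{u_n, v_n form an orthogonal basis} and~\ref{H_1, H_2 are invariant under sigma}, and then argue that $u_0$ (under the identification $H(n)\cong\mathbb C^2$) is a common eigenvector of every $\widetilde T_i$. Your explicit observation that each $\sigma_{i,j}(\psi)$ collapses to the scalar $c_{i,j}(\psi)$ is exactly equation~\eqref{eqn:2.1.1.1.1} and makes the presentation tidier. The one structural difference is that you jump straight from $n=0$ to an arbitrary $i$ in a single use of the block identity, whereas the paper climbs inductively from $k$ to $k+1$ (and from $-k$ to $-k-1$). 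Either route is viable; yours is shorter once the key non-vanishing is established.

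That non-vanishing is where your write-up has a genuine gap. You assert that $\psi\mapsto c_{i,0}(\psi)-\bar a\,c_{i,1}(\psi)\mu$ is ``a non-trivial real-analytic function of $\psi$'' and that the bad set is ``controlled by the finitely many eigenvalues of $\widetilde T_0$.'' But $\mu$ is \emph{not} a free parameter here---it is whichever eigenvalue of $\widetilde T_0$ you picked---so you must rule out the possibility that for this specific $\mu$ the function vanishes identically for some (or all) $i$. Real-analyticity alone does not do this. The paper handles the analogous issue by noting that if the left coefficient vanishes for all $\phi_a$ then, since $v\neq 0$ and $\widetilde T_i$ is invertible, the right coefficient must vanish too; multiplying the two resulting identities eliminates $\mu$ and yields
\[
|a|^{2}\,c_{i+1,0}(\phi_a)\,c_{i,1}(\phi_a)\;=\;c_{i,0}(\phi_a)\,c_{i+1,1}(\phi_a)
\]
for all small $a$, and a leading-order comparison (at $r=|a|^2\to 0$) gives a contradiction (cf.\ the argument following~\eqref{equation:2.2.2.x}). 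The same device works verbatim in your direct set-up with $n=0$; you should insert it in place of the unproved assertion. Once that is done your argument is complete and, as noted, somewhat more streamlined than the paper's induction.
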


\begin{proof}There exists $\lambda_{0} \in \mathbb{C}$ and a pair of orthonormal vectors $u_0, v_0$ in $H(0)$ such that the vector $u_0$ is an eigenvector for the operator 
$\sigma_{1,0}(\phi_b)^{-1} T_{0}$ with eigenvalue $\lambda_0,$ that is,  
$$\sigma_{1,0}(\phi_b)^{-1} T_{0} u_{0} = \lambda_{0} u_{0}.$$ 
Now, define 
$$u_{n} = \sigma_{n,0}(\phi_b)u_{0},\,\, v_{n} = \sigma_{n,0}(\phi_b)v_{0}$$
for all $n \in \mathbb{Z},\,\,n \neq 0$. Suppose $H_{1}$ and $H_{2}$ are the closed subspaces spanned by $\{u_{n}\}_{n \in \mathbb{Z}}$ and $\{v_{n}\}_{n \in \mathbb{Z}},$ respectively. Then by Lemma \ref{u_n, v_n form an orthogonal basis},  $L^2(\mathbb{T}) \oplus L^2(\mathbb{T}) = H_{1} \oplus H_{2}$ and by Lemma \ref{H_1, H_2 are invariant under sigma}, each $H_{i}$ is invariant under $\sigma$ such that $\sigma_{|H_{i}}$ is equivalent to $P_{\lambda, 0}$. Now we show that $T (H_{1}) \subseteq H_{1}$.

We have $T_{0}u_{0} = \lambda_{0}\sigma_{1,0}(\phi_b)u_{0},$ which is a scalar multiple of the vector $u_{1}$. An inductive argument given below shows that $T_{n} u_{n}$ is a scalar multiple of the vector $u_{n+1}$ for every $n \in \mathbb{Z}$.

Assume that $T_{k}u_{k} = \lambda_{k+1}u_{k+1}$ for some $\lambda_{k+1} \in \mathbb{C},$ $k \geq 0$. Let $A_{k} = \displaystyle\bigcup_{0 \leq i,j \leq k+2}A_{i,j},$ where $A_{i,j}$ are described in Definition \ref{zero set of matrix coefficient}. Since $0$ is not a limit point of any $A_{i, j}$, 
  there exists $r_{k} \in (0, 1)$ such that $\left\langle P_{\lambda, 0}(\phi_{a})z^{j}, z^{i} \right\rangle \neq 0$,  $0 \leq i, j \leq k+2,$
for all $a \in \mathbb{D}$ with $0 < |a| < r_{k}.$ 
Combining the two equalities 
$$\sigma_{k+1, 0}(\phi_b) = \frac{\left\langle P_{\lambda, 0}(\phi_b)e_{0}, e_{k+1}\right\rangle}{\left\langle P_{\lambda, 0}(\phi_b)e_{0}, e_{k}\right\rangle \left\langle P_{\lambda, 0}(\phi_b)e_{k}, e_{k+1} \right\rangle}\sigma_{k+1,k}(\phi_b)\sigma_{k,0}(\phi_b)$$
and 
$$\sigma_{k+1,k}(\phi_b) = \frac{\left\langle P_{\lambda, 0}(\phi_b)e_{k}, e_{k+1}\right\rangle }{\left\langle P_{\lambda, 0}(\phi_{a})e_{k}, e_{k+1} \right\rangle}\sigma_{k+1,k}(\phi_{a}),\,\,|a| < r_{k}, $$
we have $T_{k}u_{k} = \lambda_{k+1}(a)\sigma_{k+1,k}(\phi_{a})u_{k}$, where 
$$\lambda _{k+1}(a) = \lambda_{k+1}   \frac{\left\langle P_{\lambda, 0}(\phi_b)e_{0}, e_{k+1}\right\rangle \left\langle P_{\lambda, 0}(\phi_b)e_{k}, e_{k+1}\right\rangle}{\left\langle P_{\lambda, 0}(\phi_b)e_{0}, e_{k}\right\rangle \left\langle P_{\lambda, 0}(\phi_b)e_{k}, e_{k+1}\right\rangle \left\langle P_{\lambda, 0}(\phi_{a})e_{k}, e_{k+1} \right\rangle}.$$
For every $\phi_{a}$ with $|a| < r_{k}$, this proves the existence of $\lambda_{k+1}(a) \in \mathbb{C}$ such that 
$$T_{k}u_{k} = \lambda_{k+1}(a) \sigma_{k+1, k}(\phi_{a})u_{k}.$$
Now, for every $\phi_{a}$ with $|a| < r_{k}$, substituting $n = k, i = k+1$ in the equation \eqref{eqn:2.1.25}, and then evaluating on the vector $u_k,$ we get
\begin{multline*}
a\sigma_{k+2,k}(\phi_{a})u_{k} - \lambda_{k+1}(a)\sigma_{k+2, k+1}(\phi_{a}) \sigma_{k+1, k}(\phi_{a})u_{k} \\
= T_{k+1} \sigma_{k+1,k}(\phi_{a})u_{k} - \overline{a}\lambda_{k+1}(a)\left\langle P_{\lambda, 0}(\phi_{a})e_{k+1}, e_{k+1} \right\rangle T_{k+1} \sigma_{k+1,k}(\phi_{a})u_{k}.
\end{multline*}
The equality below is easily verified using the definition of the $\sigma_{i,j}:$ 
\begin{equation*}
\sigma_{k+2, k}(\phi_{a}) = \frac{\left\langle P_{\lambda, 0}(\phi_{a})e_{k}, e_{k+2} \right\rangle}{\left\langle P_{\lambda, 0}(\phi_{a})e_{k}, e_{k+1}\right\rangle \left\langle P_{\lambda, 0}(\phi_{a})e_{k+1}, e_{k+2} \right\rangle} \sigma_{k+2, k+1}(\phi_{a})\sigma_{k+1, k}(\phi_{a}).
\end{equation*}
In consequence, we have 
\begin{multline}\label{equation:2.2.2.x}
\left(\frac{a \left\langle P_{\lambda, 0}(\phi_{a})e_{k}, e_{k+2} \right\rangle}{\left\langle P_{\lambda, 0}(\phi_{a})e_{k}, e_{k+1}\right\rangle \left\langle P_{\lambda, 0}(\phi_{a})e_{k+1}, e_{k+2}\right\rangle} - \lambda_{k+1}(a)\right) \sigma_{k+2, k+1}(\phi_{a})\sigma_{k+1, k}(\phi_{a})u_k\\
= \left(1 - \overline{a} \lambda_{k+1}(a)\left\langle P_{\lambda, 0}(\phi_{a})e_{k+1}, e_{k+1} \right\rangle\right) 
T_{k+1}\sigma_{k+1,k}(\phi_{a}) u_{k}.
\end{multline}
Suppose 
\begin{equation*}
\left(\frac{a \left\langle P_{\lambda, 0}(\phi_{a})e_{k}, e_{k+2}\right\rangle }{\left\langle P_{\lambda, 0}(\phi_{a})e_{k}, e_{k+1}\right\rangle \left\langle P_{\lambda, 0}(\phi_{a})e_{k+1}, e_{k+2}\right\rangle} - \lambda_{k+1}(a)\right) = 0
\end{equation*}
and
\begin{equation*}
\left(1 - \overline{a} \lambda_{k+1}(a)\left\langle P_{\lambda, 0}(\phi_{a})e_{k+1}, e_{k+1}\right\rangle\right) = 0
\end{equation*}
for all $\phi_a$ with $|a| < r_k$.
Then we have
\begin{equation*}
|a|^{2} \left\langle P_{\lambda, 0}(\phi_{a})e_{k}, e_{k+2}\right\rangle \left\langle P_{\lambda, 0}(\phi_{a})e_{k+1}, e_{k+1}\right\rangle = \left\langle P_{\lambda, 0}(\phi_{a})e_{k}, e_{k+1}\right\rangle \left\langle P_{\lambda, 0}(\phi_{a})e_{k+1}, e_{k+2}\right\rangle
\end{equation*}
for all $|a| < r_{k}$. 


Now, using the matrix coefficient for $P_{\lambda, 0}(\phi_{a})$,  $0 \leq r \leq r_{k}^{2},$ and then putting $r = 0$ we arrive at a contradiction.

We can therefore find $\phi_{a}$ with $0 < |a| < r_{k}$ such that 
$$\left(\frac{a \left\langle P_{\lambda, 0}(\phi_{a})e_{k}, e_{k+2}\right\rangle}{\left\langle P_{\lambda, 0}(\phi_{a})e_{k}, e_{k+1}\right\rangle \left\langle P_{\lambda, 0}(\phi_{a})e_{k+1}, e_{k+2}\right\rangle} - \lambda_{k+1}(a)\right) \neq 0$$ 
and hence 
$$\left(1 - \overline{a} \lambda_{k+1}(a)\left\langle P_{\lambda, 0}(\phi_{a})e_{k+1}, e_{k+1}\right\rangle\right) \neq 0$$
as both $\sigma_{k+2, k+1}(\phi_{a})$ and $T_{k+1}$ are invertible. 
Since $0 < |a| < r_{k}$, 
it follows from \eqref{equation:2.2.2.x} that $T_{k+1}u_{k+1}$ is a scalar multiple of the vector $u_{k+2}$
completing half the induction argument. 

A similar but slightly different proof gives the other half of the induction argument, namely, $T_{-n}^{-1}u_{-n+1}$ is a scalar multiple of $\{u_{-n}\}$ for all $n \in \mathbb{N}$.
\end{proof}

\begin{cor}\label{all homog op for principal series case when s = 0}
If $T$ is a homogeneous operator with associated representation $P_{\lambda, 0} \oplus P_{\lambda, 0},\,\,\lambda \neq 1,$ then $T$ is unitarily equivalent to one of the following     
operator
$$\left[\begin{array}{ccc}
                    B & \alpha S[\lambda]\\
                      0 & B
                  \end{array}\right], \left[\begin{array}{ccc}
                      B & 0\\
                      0 & B
                  \end{array}\right],$$  
where $S[\lambda]$ is the weighted shift on $L^2(\mathbb{T})$ with respect to the orthonormal basis $\left\lbrace z^n : n \in \mathbb{Z} \right\rbrace$ with weight sequence $\left\lbrace \frac{1}{\lambda + 2n + 1} : n \in \mathbb{Z} \right\rbrace$.                             
\end{cor}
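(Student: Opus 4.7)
The plan is to reduce the statement to Proposition \ref{Finding S when s=0} by using the structural decomposition already obtained in Theorem \ref{exsistence of simultaneous invariant subspace of T and sigma}, following the same pattern as Corollary \ref{all homog op for principal series case when s not 0} but with $s=0$ and $\lambda\neq 1$.

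First, I would apply Theorem \ref{exsistence of simultaneous invariant subspace of T and sigma} to the given homogeneous operator $T$ with associated representation $\sigma = P_{\lambda,0} \oplus P_{\lambda,0}$. This produces a decomposition $L^2(\mathbb{T}) \oplus L^2(\mathbb{T}) = H_1 \oplus H_2$ such that $T(H_1) \subseteq H_1$, each $H_i$ is $\sigma$-invariant, and $\sigma|_{H_i}$ is unitarily equivalent to $P_{\lambda,0}$. Choosing unitaries $U_i : H_i \to L^2(\mathbb{T})$ that implement these equivalences and setting $U = U_1 \oplus U_2$, the operator $UTU^*$ acts on $L^2(\mathbb{T}) \oplus L^2(\mathbb{T})$ in the block upper-triangular form $\bigl[\begin{smallmatrix} T_1 & S \\ 0 & T_2 \end{smallmatrix}\bigr]$ and is homogeneous with associated representation $P_{\lambda,0} \oplus P_{\lambda,0}$ in the standard model.

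Next, by \cite[Proposition 2.4]{CONSTCharGMBB}, the diagonal blocks $T_1, T_2$ are themselves homogeneous with associated representation $P_{\lambda,0}$. Since $s=0$, the only homogeneous shifts with associated representation $P_{\lambda,0}$ are (up to unitary conjugacy) the unweighted bi-lateral shift $B$, because $B(s) = B$ in this case. Thus, after a further diagonal unitary conjugation that commutes with $\sigma$, we may assume $T_1 = T_2 = B$. By \cite[Lemma 2.5]{CONSTCharGMBB}, the off-diagonal block $S$ must then satisfy
$$S P_{\lambda,0}(\phi) - e^{i\theta} P_{\lambda,0}(\phi) S = \overline{a}\, B\, P_{\lambda,0}(\phi) S + \overline{a}\, S\, P_{\lambda,0}(\phi) B, \quad \phi \in \mathrm{M\ddot{o}b},$$
which is precisely equation \eqref{eqn:2.1.6}. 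Applying Proposition \ref{Finding S when s=0}(b) (note that the hypothesis $\lambda \neq 1$ is in force), we conclude $S$ is a scalar multiple $\alpha S[\lambda]$ of the weighted shift $S[\lambda]$.

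Finally, to arrive at the canonical forms listed in the statement, I would split into cases: if $\alpha = 0$, then $T$ is unitarily equivalent to $\bigl[\begin{smallmatrix} B & 0 \\ 0 & B \end{smallmatrix}\bigr]$. If $\alpha \neq 0$, write $\alpha = |\alpha| e^{i\psi}$ and conjugate by a suitable diagonal unitary of the form $e^{i\psi/2} I \oplus e^{-i\psi/2} I$ (which commutes with $B$ on both factors) to absorb the phase, leaving $T$ unitarily equivalent to $\bigl[\begin{smallmatrix} B & |\alpha| S[\lambda] \\ 0 & B \end{smallmatrix}\bigr]$. The main technical point, though routine, is verifying that the various unitary conjugations in steps one and two can be chosen compatibly so that the intertwining equation for $S$ takes the exact shape required to invoke Proposition \ref{Finding S when s=0}; everything else is a direct application of previously established results.
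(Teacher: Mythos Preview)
Your proposal is correct and follows essentially the same approach as the paper: the paper's proof is a one-line citation of Theorem~\ref{exsistence of simultaneous invariant subspace of T and sigma} and Proposition~\ref{Finding S when s=0}(b), and you have simply unpacked how these two results combine (together with \cite[Proposition 2.4 and Lemma 2.5]{CONSTCharGMBB} and the observation that $B(0)=B$) to yield the upper-triangular form with the required off-diagonal block.
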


\begin{proof}
The proof follows form Theorem \ref{exsistence of simultaneous invariant subspace of T and sigma} and Proposition \ref{Finding S when s=0}(b).
\end{proof}

\section{The associated representation is the direct sum of three irreducible representations}
Now, we prove that every homogeneous operator whose associated representation is $\pi = \pi_{1} \oplus \pi_{2},$ where $\pi_{1}$ is from the irreducible Continuous series representations and $\pi_{2}$ is the direct sum of a holomorphic and an anti-holomorphic Discrete series representation, is reducible. Let $\pi_{1} = R_{\lambda, \mu}$ and $H_{1}$ be the representation space of $\pi_{1}$. Let $e_{n}^{1} = \frac{z^{n}}{\|z^{n}\|_{1}}.$ Recall that $\{e_n^1:n\in \mathbb Z\}$ is an orthonormal basis of the representation space $H_1.$ 
Let $\pi_{2} = D_{\lambda_1}^{+} \oplus D_{\lambda_2}^{-}$ 
for a pair of positive real numbers $\lambda_1, \lambda_2.$ However, the  multipliers of all the three representations  $\pi_1,$ $D^+_{\lambda_1}$ and $D^-_{\lambda_2}$ must be the same. In consequence, $\lambda_1+\lambda_2$ is an even integer (see \cite[Corollary 3.2]{Homogshift}), therefore $\lambda_1= \lambda+2m$ and $\lambda_2 = 2-\lambda+2k,$ $- 1 <\lambda \leq 1.$  

Let $H^{(\lambda + 2m)}$ be the representation space of $D_{\lambda + 2m}^{+}$ and $H^{(2-\lambda + 2k)}$ be the representation space of $D_{2-\lambda + 2k}^{-}$. Let $H_{2} = H^{(\lambda + 2m)} \oplus H^{(2-\lambda + 2k)}$. 
The set of vectors $\{e_{n}^{2} : n \in \mathbb{Z} \}$ is an orthonormal basis of $H_{2}$, where $e_n^2,\,\,n \in \mathbb{Z},$ are described in \eqref{eq: onb}. Let $\phi_{\theta}$ be a rotation in M\"{o}b. Then 
$$\pi_{1}(\phi_{\theta}) e_{n}^{1} = e^{-i \left( n + \frac{\lambda}{2} \right) \theta} e_{n}^{1},\, n \in \mathbb{Z}.$$
Also, it is easy to see that 
$$\pi_{2}(\phi_{\theta}) e_{n}^{2} = e^{-i \left( n + m + \frac{\lambda}{2} \right) \theta}e_{n}^{2},\,n \geq 0\,\,\mbox{and}\,\,\pi_{2}(\phi_{\theta}) e_{-n}^{2} = e^{i \left( n + k - \frac{\lambda}{2} \right) \theta}e_{-n}^{2},\,n \geq 1.$$
Clearly, there exists a $\theta$ such that $e^{-i \left( n + m + \frac{\lambda}{2} \right) \theta} \neq e^{i \left(p + k - \frac{\lambda}{2} \right) \theta}$ for all $n \geq 0$, $p \geq 1$ and if $n_{1} \neq n_{2},$ then $e^{-i \left( n_{1} + \frac{\lambda}{2} \right) \theta} \neq e^{-i \left( n_{2} + \frac{\lambda}{2} \right) \theta }.$

%
%

\begin{thm}\label{when rep. has three comp, S_1 = 0}
Suppose $T = \left[\begin{array}{ccc}
    T_{1} & S_{1}\\
    S_{2} & T_{2}
\end{array}\right]$ is a homogeneous operator with associated representation $\pi_{1} \oplus \pi_{2},$ where $\pi_1 = R_{\lambda, \mu}$ is from the Continuous series excluding $P_{1,0}$ and $\pi_{2} = D_{\lambda + 2m}^{+} \oplus D_{2-\lambda + 2k}^{-}$. Then $S_1 = 0$.
\end{thm}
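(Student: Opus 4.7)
The plan is to mirror the structure of the proof of Theorem \ref{homog op when associated rep is direct sum of cont. series rep}. Since $T$ is homogeneous with associated representation $\pi_{1} \oplus \pi_{2}$, the off-diagonal block $S_{1}$ satisfies (from the four-equation expansion of the intertwining relation under $\phi_{\theta, a}$)
\begin{equation*}
S_{1} \pi_{2}(\phi_{\theta, a}) - e^{i \theta} \pi_{1}(\phi_{\theta, a}) S_{1} = \overline{a}\, T_{1} \pi_{1}(\phi_{\theta, a}) S_{1} + \overline{a}\, S_{1} \pi_{2}(\phi_{\theta, a}) T_{2}.
\end{equation*}

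\textbf{Step 1 (rotation analysis).} Specializing to $a=0$, one gets $S_{1} \pi_{2}(\phi_{\theta}) = e^{i\theta} \pi_{1}(\phi_{\theta}) S_{1}$. Matching $\mathbb{K}$-characters using the formulas for $\pi_{1}(\phi_\theta)$ and $\pi_{2}(\phi_\theta)$ displayed just before the theorem, $S_{1}$ must have the ``shift'' form
\begin{equation*}
S_{1} e_{n}^{2} = \alpha_{n}\, e_{n+m+1}^{1}\ (n \geq 0), \qquad S_{1} e_{-n}^{2} = \beta_{n}\, e_{1-n-k}^{1}\ (n \geq 1),
\end{equation*}
for scalar sequences $\{\alpha_{n}\}_{n\geq 0}$ and $\{\beta_{n}\}_{n \geq 1}$. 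A parallel rotation analysis of the diagonal equations forces $T_{1}$ to be a weighted bilateral shift with respect to $\{e_{n}^{1}\}$, and $T_{2}$ to respect (except possibly on the single bridge $e_{-1}^{2}\mapsto e_{0}^{2}$ occurring when $m=k=0$) the decomposition $H^{(\lambda+2m)} \oplus H^{(2-\lambda+2k)}$, acting as a weighted shift on each summand.

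\textbf{Step 2 (involution analysis).} Next, I would expand the intertwining equation under the involutions $\phi_{a}$ as a power series in $r=|a|^{2}$ using the matrix coefficients of $\pi_{1}$ and of $D^{\pm}$, exactly as in the Appendix calculations used for (\ref{basic eq: 4}) in the proofs of Proposition \ref{Finding S}, Proposition \ref{Finding S when s=0}, and Theorem \ref{homog op when associated rep is direct sum of cont. series rep}. Equating coefficients of $r$ on the vector $e_{n}^{2}$ (with $n \geq 0$) produces a first-order recursion
\begin{equation*}
\alpha_{n}\, p_{n} = \alpha_{n-1}\, q_{n}, \qquad n \geq 1,
\end{equation*}
whose coefficients $p_n, q_n$ are explicit in the weights of $T_{1}$, $T_{2}$ and the parameters $\lambda, \mu, m$. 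An entirely analogous recursion governs $\beta_{n}$ on the $D^{-}$ side.

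\textbf{Step 3 (boundary condition).} The decisive new input, compared with Theorem \ref{homog op when associated rep is direct sum of cont. series rep}, is the one-sided support of the $\mathbb{K}$-spectrum of the Discrete series summands: $H^{(\lambda + 2m)}$ has no vector at degree $-1$, and $H^{(2-\lambda + 2k)}$ has no vector at degree $0$. Applying the $S_{1}$-equation to the extremal vectors $e_{0}^{2}$ and $e_{-1}^{2}$, respectively, yields boundary identities in which the would-be terms $\alpha_{-1}$ and $\beta_{0}$ are absent. Combined with the recursions of Step 2, these boundary relations cascade to give $\alpha_{n}=0$ for all $n \geq 0$ and $\beta_{n}=0$ for all $n \geq 1$, hence $S_{1}=0$.

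\textbf{Main obstacle.} The hardest part will be the bookkeeping in Step 2: because $\pi_{1}$ is Continuous series with two-sided $\mathbb{K}$-spectrum while $D^{\pm}$ are one-sided, the two matrix-coefficient expansions combine asymmetrically, and one must choose the right power of $r$ to compare so that the recursion closes and its coefficients do not accidentally vanish. The exclusion of $P_{1,0}$ enters precisely here, guaranteeing that the coefficient $p_{n}$ is non-zero for every $n$ — otherwise the cascade from the boundary would not propagate. Once this algebraic nonvanishing is in place, the vanishing of $\alpha_{n}$ and $\beta_{n}$ is automatic.
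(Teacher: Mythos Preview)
Your Step~1 matches the paper. But Steps~2--3 take a route that the paper avoids, and for good reason: your proposed recursion $\alpha_n p_n = \alpha_{n-1} q_n$ has coefficients $p_n, q_n$ that depend on the \emph{unknown} weights $u_n, v_n$ of $T_1, T_2$. At this stage nothing is known about $S_2$, so \cite[Proposition~2.4]{CONSTCharGMBB} does not apply and $T_1, T_2$ are not yet identified as concrete homogeneous shifts. Without control of $u_n, v_n$ you cannot verify the nonvanishing you need, and the boundary-cascade does not obviously close. Your remark that the exclusion of $P_{1,0}$ is what makes $p_n \neq 0$ is speculative; in the paper that exclusion plays no role in this argument at all.

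The paper's mechanism is different and more direct. For $n \geq 0$ it applies Algorithm~1 with $i = n$ but with $j = -p-k+1$ far on the \emph{negative} side of $H_1$. Because $\pi_2(\phi_a)$ acting on $e_n^2 \in H^{(\lambda+2m)}$ stays entirely on the non-negative side, every term involving $S_1 \pi_2(\phi_a)$ (and hence every term carrying $v_n$ or any $\alpha_q$ with $q \neq n$) has zero inner product with $e_j^1$. The only surviving contribution comes from $\pi_1(\phi_a) S_1 e_n^2 = \alpha_n \pi_1(\phi_a) e_{n+m+1}^1$, whose matrix coefficient at $e_j^1$ is nonzero because $\pi_1$ is Continuous series with full two-sided $\mathbb{K}$-spectrum. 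Comparing the lowest power of $r$ gives $\alpha_n \cdot (\text{nonzero}) = 0$ directly, with no recursion and no dependence on $u_n, v_n$.

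For $n \leq -1$ the paper again pairs across the gap (now $i \leq -1$, $j > 0$), but the surviving equation does contain one unknown weight $u_{p-1}$. Rather than a recursion, the paper uses a limiting argument: if some subsequence $\alpha_{-n_m} \neq 0$, the bracketed coefficient must vanish for every $n_m$, and letting $n_m \to \infty$ forces $u_{p-1} = 0$, a contradiction. This shows $\alpha_{-n} = 0$ for all large $n$, and a second cross-pairing (now with $j$ chosen far negative past the finitely many remaining indices) finishes off the rest. The key idea you are missing is this systematic exploitation of the one-sidedness of $D^{\pm}$ against the two-sidedness of $R_{\lambda,\mu}$ via cross-pairings, rather than a within-block recursion.
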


\begin{proof}
Homogeneity of $T$ implies that the operators $T_{i}$ and $S_{i}$ satisfy equations \eqref{eqn:2.1.17}, \eqref{eqn:2.1.18}, \eqref{eqn:2.1.19} and \eqref{eqn:2.1.20}.
Repeating an argument similar to the one in Theorem \ref{homog op when associated rep is direct sum of cont. series rep}, we find that $T_{1}$ and $T_2$ are weighted shifts with respect to the orthonormal basis $\{e_{n}^{1} \}$ and $\{e_n^2\},$ respectively. Let $\{u_{n}\}$ and $\{v_n\}$ be the weight sequences of $T_{1}$ and $T_2,$ respectively. It is easy to see that $v_{-1} = 0$ unless either $m = 0,$ $k = 0,$ $\lambda >0$ or $m = 1,$ $k = -1,$ $\lambda < 0$ .

From the equation \eqref{basic eq: 5}, it follows that
\begin{enumerate}
\item[(a)] for $n \geq 0,$ there exists $\alpha_{n} \in \mathbb{C}$ such that $S_{1}e_{n}^{2} = \alpha_{n}e_{n+m+1}^{1}$. 
\item[(b)] for $n \geq 1,$ there exists $\alpha_{-n} \in \mathbb{C}$ such that $S_{1}e_{-n}^{2} = \alpha_{-n}e_{-n-k+1}^{1}.$

\end{enumerate}
Applying \hyperlink{algorithm 1}{Algorithm $1$} from the Appendix, for $i = n \geq 0$ and $j = -p-k+1,$  $p > 1$, using the matrix coefficient of $\pi_1(\phi_a)$ and finally comparing the coefficient of $r^{n+m+p+k},$ we obtain 

\begin{equation*}
\alpha_{n} \|z^{-p-k+1}\|_{1} C_{n+m+p+k}^{1}(n+m+1, -p-k+1) = 0 
\end{equation*}
This implies that $\alpha_{n} = 0$, because $\|z^{-p-k+1}\|_{1} C_{n+m+p+k-1}^{1}(n+m+1, -p-k+1) \neq 0$. This proves that $S_{1} e_{n}^{2} = 0$ for all $n \geq 0$.

To prove that $S_1 e_{-n}^2 = 0,$  $n \geq 1,$ we again apply \hyperlink{algorithm 1}{Algorithm $1$} for $i=n \leq -1$ and $j=p > 0,$ use the matrix coefficients of $\pi_1(\phi_a)$ and finally equate the constant term on both sides to conclude
\begin{equation*}
\alpha_{-n} \left[u_{p-1} \frac{\|z^{p-1}\|_{1}}{\|z^{p}\|_{1}} + \frac{(-\lambda - \mu - p + 1)}{(p + n + k - 1)} \right] = 0.
\end{equation*}
Now, suppose there exists a subsequence $(n_{m})$ such that $\alpha_{-n_{m}} \neq 0.$  Then 
\begin{equation*}
u_{p-1} \frac{\|z^{p-1}\|_{1}}{\|z^{p}\|_{1}} + \frac{(-\lambda - \mu - p + 1)}{(p + n_{m} + k - 1)}  = 0,
\end{equation*}
for all $n_{m}.$ Therefore taking $m \rightarrow \infty$, we see that 
$u_{p-1} \tfrac{\|z^{p-1}\|_{1}}{\|z^{p}\|_{1}} = 0.$
Hence $\alpha_{-n} = 0$ for all $n \geq 1,$ leading to a contradiction, since we have assumed that $\alpha_{-n_{m}} \neq 0$ for all $m \geq 1$. 
Thus there is no subsequence $\{n_{m}\}$ such that $\alpha_{-n_{m}} \neq 0,$ or in other words, there exists a natural number $N$ such that $\alpha_{-n} = 0$ for all $n \geq N$. One more time applying  \hyperlink{algorithm 1}{Algorithm $1$} for $-N < i=n \leq -1$ and $j = -n-l-k+2$ where  $l:l > N-n,$ then using the matrix coefficients of $\pi_1(\phi_a)$ and finally comparing coefficients of $r^l,$ we have 
\begin{equation*}
\alpha_{-n} \|z^{-n-l-k+1}\|_{1} C_{l}^{1}(-n-k+1, -n-l-k+1) = 0.
\end{equation*}
It follows that $\alpha_{-n} = 0$ for all $1 \leq n < N$. Therefore we have proved that $S_{1} = 0$.
\end{proof}

\begin{thm}\label{when rep. has three comp, and mu is not 0, S_2 = 0}
Suppose $T = \left[\begin{array}{ccc}
    T_{1} & 0\\
    S_{2} & T_{2}
\end{array}\right]$ is a homogeneous operator and $\pi_{1} \oplus \pi_{2}$ is the associated representation. Then $S_2$ satisfies the equation \eqref{eqn:2.1.19}.  If $\pi_1 = R_{\lambda, \mu}$ is from the Continuous series representation excluding $P_{1,0}$ and $\pi_{2} = D_{\lambda + 2m}^{+} \oplus D_{2-\lambda + 2k}^{-},$   then $S_2 = 0$.
\end{thm}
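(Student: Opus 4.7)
The first assertion that $S_2$ satisfies equation \eqref{eqn:2.1.19} is immediate: one simply expands the homogeneity relation $\phi(T) = \pi(\phi)^{*} T \pi(\phi)$ in block form and reads off the $(2,1)$ off-diagonal entry, which yields precisely \eqref{eqn:2.1.19}. The remaining task, namely showing $S_{2} = 0$, would be accomplished by mirroring the argument of Theorem \ref{when rep. has three comp, S_1 = 0} with the roles of $H_{1}$ and $H_{2}$ interchanged.

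First I would specialise \eqref{eqn:2.1.19} to $\phi = \phi_{\theta}$ to extract the $\mathbb K$-equivariance of $S_{2}$. Using the characters of $\{e_{n}^{1}\}$ under $\pi_{1}(\phi_{\theta})$ and of $\{e_{n}^{2}\}$ under $\pi_{2}(\phi_{\theta})$ recorded just before the statement of the theorem, one finds that $S_{2} e_{n}^{1}$ lies in the $\pi_{2}(\phi_{\theta})$-isotypic subspace of $H_{2}$ carrying the character $e^{-i(n+1+\lambda/2)\theta}$. Matching this with the characters available in $H^{(\lambda+2m)}$ and $H^{(2-\lambda+2k)}$ produces two weight sequences,
$$S_{2} e_{n}^{1} = \beta_{n} e_{n-m+1}^{2},\quad n \geq m-1,\qquad S_{2} e_{n}^{1} = \gamma_{n} e_{n+k+1}^{2},\quad n \leq -k-2,$$
together with $S_{2} e_{n}^{1} = 0$ for indices $n$ whose shifted character does not occur in $H_{2}$.

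Next I would substitute the involutions $\phi_{a}$ into \eqref{eqn:2.1.19} and invoke Algorithm $1$ of the Appendix, feeding in the matrix coefficients of $\pi_{1}(\phi_{a})$ on the basis $\{e_{n}^{1}\}$ and of $\pi_{2}(\phi_{a})$ on the two Discrete series pieces of $H_{2}$. Picking indices $(i,j)$ exactly as in the proof of Theorem \ref{when rep. has three comp, S_1 = 0}, and then comparing coefficients of a carefully chosen power of $r = |a|^{2}$, would isolate a single $\beta_{n}$ (respectively $\gamma_{n}$) multiplied by a factor which is nonzero thanks to the boundary of $H^{(\lambda+2m)}$ (no vectors of index below $0$) or of $H^{(2-\lambda+2k)}$ (no vectors of index above $-1$). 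This forces $\beta_{n} = 0$ for all $n \geq m-1$ and $\gamma_{n} = 0$ for all $n \leq -k-2$, and hence $S_{2} = 0$.

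The main obstacle, as in Theorem \ref{when rep. has three comp, S_1 = 0}, is the appearance of a degenerate parameter range in which the expected isolating coefficient vanishes; this is precisely the situation flagged in the proof of Theorem \ref{when rep. has three comp, S_1 = 0}, where $v_{-1} = 0$ fails unless $(m,k,\lambda)$ lies in one of the two listed exceptional families. In that case, a direct extraction of coefficients only shows that $\beta_{n}$ (or $\gamma_{n}$) vanishes for all but finitely many indices, and to rule out a stray finite tail I would run the same subsequential limiting argument employed at the end of Theorem \ref{when rep. has three comp, S_1 = 0}: assume a subsequence of nonvanishing weights, use the resulting recursion to obtain an asymptotic identity by sending the index to infinity, derive a contradiction with the known form of $u_{n}$, and then a final coefficient comparison disposes of the remaining finite range.
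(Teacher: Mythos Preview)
Your overall plan is right—extract $\mathbb K$-equivariance from \eqref{eqn:2.1.19}, identify $S_{2}$ as a pair of weight sequences, then kill each sequence via Algorithm~2 (not Algorithm~1, which is for $S_{1}$) applied to well-chosen $(i,j)$.  However, the paper does \emph{not} handle the positive-index weights $\beta_{n}$ by mirroring the subsequential limiting argument of Theorem~\ref{when rep. has three comp, S_1 = 0}.  Instead, for a fixed $n$ it applies Algorithm~2 once (with $i=-1$, $j=n+1-m$) and then compares \emph{two} different powers of $r$ (the constant term and the coefficient of $r$) in the resulting identity.  Each relation separately involves the unknown weight $u_{-1}$ of $T_{1}$; taking the pair together eliminates $u_{-1}$ and leaves
\[
\beta_{n}\Bigl[\tfrac{\mu+1}{n+2}-\tfrac{\mu}{n+1}\Bigr]=0,
\]
whose bracket is nonzero for every $n\geq 0$, forcing $\beta_{n}=0$ directly.

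Your proposed route—use a subsequence $\beta_{n_{m}}\neq 0$, pass to the limit to force $u_{-1}=0$, then mop up the finite tail—can be pushed through, but it is more circuitous and the ``degenerate case'' you flag (the exceptional $(m,k,\lambda)$ where $v_{-1}\neq 0$) is not actually the obstruction here: the proof of the present theorem never touches $v_{-1}$.  The real difficulty is that a single coefficient comparison leaves $u_{-1}$ undetermined, and the paper's two-coefficient trick resolves this for each $n$ individually without any asymptotic argument.  For the negative-index weights $\gamma_{n}$, a single coefficient comparison does suffice, exactly as you describe.
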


\begin{proof}
\textbf{Case I $\boldsymbol{(m \geq 1)}$:} Assume $m\geq 1.$  From 
the equation \eqref{basic eq: 6}, only the following possibilities occur.
\begin{enumerate}
\item[(a)] There exists $\alpha_{n} \in \mathbb{C}$ such that $S_{2}e_{n}^{1} = \alpha_{n}e_{n+1-m}^{2},$  $n \geq m - 1$ and $S_{2}e_{n}^{1} = 0,$  $0 \leq n < m-1.$
\item[(b)] There exists $\alpha_{-n} \in \mathbb{C}$ such that $S_{2}e_{-n}^{1} = \alpha_{-n}e_{-n+k+1}^{2},$ $n > k + 1$ and $S_{2}e_{-n}^{1} = 0,$  $1 \leq n \leq k + 1.$ 
\end{enumerate}
Applying \hyperlink{algorithm 2}{Algorithm $2$} from the appendix, for $i = 0$ and $j = -n+k+1,$ $n > k+1$, using the matrix coefficient of $\pi_1(\phi_a)$ and finally comparing the coefficient of $r^{n},$ we see that $\alpha_{-n} = 0$. Thus for every $n \geq 1,$ we have $S_{2}e_{-n}^{1} = 0.$ To complete the proof, we have to show that $S_{2}e_{n}^{1} = 0$, $n \geq 0$.

Now applying \hyperlink{algorithm 2}{Algorithm $2$} for $i = -1$ and $j = n+1-m,$  $n \geq m-1$, using the matrix coefficient of $\pi_1(\phi_a)$ and finally comparing the constant coefficients and the coefficients of $r,$ respectively, we get
\begin{equation*}
\frac{\alpha_{n} (- \lambda - \mu + 1)}{\|z^{-1}\|_{1} (n+1)} = \frac{\alpha_{n} u_{-1}}{\|z^{0}\|_{1}}\,\,\mbox{and}\,\,
\frac{\alpha_{n} (\mu + 1) (- \lambda - \mu + 1)}{\|z^{-1}\|_{1} (n+2)} = \frac{\alpha_{n} \mu u_{-1}}{\|z^{0}\|_{1}}.
\end{equation*}
These two equations together give  
\begin{equation*}
\alpha_{n} \left[\frac{(\mu + 1)}{(n+2)} - \frac{ \mu}{(n+1)}\right] = 0.
\end{equation*}
Since $\frac{(\mu + 1)}{(n + 2)} \neq \frac{\mu}{(n + 1)}$ for all $n \geq 0,$  we must have $\alpha_{n} = 0$ for all $n \geq 0$. This proves that $S_{2}e_{n}^{1} = 0$, $n \geq 0$. 

\noindent\textbf{Case II $\boldsymbol{(m = 0)}$:} Assume $m=0.$  In this case, $\lambda > 0.$ From the equation \eqref{basic eq: 6}, we see that 
\begin{enumerate}
\item[(a)] there exists $\alpha_n \in \mathbb{C}$ such that $S_{2}e_{n}^{1} = \alpha_{n}e_{n+1}^{2}$, $n \geq -1$;
\item[(b)] there exists $\alpha_{-n} \in \mathbb{C}$ such that $S_{2}e_{-n}^{1} = \alpha_{-n}e_{-n+k+1}^{2},$  $n > k + 1$ and $S_{2}e_{-n}^{1} = 0,$  $2 \leq n \leq k + 1.$ 
\end{enumerate}
Repeating a similar computation as in the case of ($m \geq 1$), we conclude that $\alpha_{n} = 0$ for all $n$. Therefore we have proved that $S_2 = 0$ in this case. 
\end{proof}

\begin{rem}\label{Remark about redicibility of T when the rep is P_1,s + P_1,0}
Suppose $T = \begin{bmatrix}
T_1 & S_1\\
S_2 & T_2
\end{bmatrix}$ is a homogeneous operator with associated representation $\pi_1 \oplus \pi_2,$ where $\pi_1$ is an irreducible Continuous series representation and $\pi_2 = P_{1, 0}$. The representation $P_{1, 0}$ is a direct sum of two irreducible representations. One of the summands is the $D_1^+$ while the other one is equivalent to $D_1^-.$ However, $P_{1, 0}$ is not equivalent to $D_1^+ \oplus D_1^-,$ therefore this case is not covered by the previous analysis. Fortunately, repeating the computations of Theorem \ref{when rep. has three comp, S_1 = 0} and Theorem \ref{when rep. has three comp, and mu is not 0, S_2 = 0}, we obtain that $S_1 = S_2 = 0.$
\end{rem}


\section{The associated representation is the direct sum of four irreducible representations}
In this section, we prove that every homogeneous operator with associated representation $\pi_1 \oplus \pi_2,$ where $\pi_1 = D_{\lambda_1}^+ \oplus D_{\lambda_2}^-$ and $\pi_2 = D_{\lambda_3}^+ \oplus D_{\lambda_4}^-$, is reducible. If $D_{\lambda_1}^+ \oplus D_{\lambda_2}^- \oplus D_{\lambda_3}^+ \oplus D_{\lambda_4}^-$ is a representation, then the  multipliers of all the four representations  $D_{\lambda_1}^+,$ $D_{\lambda_2}^-,$ $D_{\lambda_3}^+$ and $D_{\lambda_4}^-$ must be the same. In consequence, $\lambda_1= \lambda+2a,$ $\lambda_2 = 2-\lambda+2b,$ $\lambda_3 = \lambda+2m$ and $\lambda_4 = 2-\lambda+2p$ for some real $\lambda$ with  $0 < \lambda \leq 2$ and some non negative integers $a, b, m, p.$ 
  
Let $\lambda \in (0, 2]$ and $a, b, m, p$ be any non-negative integers. Let $\pi_{1} = D_{\lambda + 2a}^{+} \oplus D_{2-\lambda + 2b}^{-}$ and $\pi_{2} = D_{\lambda + 2m}^{+} \oplus D_{2-\lambda + 2p}^{-}$. Then the representation space of $\pi_{1}$ is $H_1:= H^{(\lambda + 2a)} \oplus H^{(2-\lambda + 2b)}$ and the representation space of $\pi_{2}$ is $H_2:= H^{(\lambda + 2m)} \oplus H^{(2-\lambda + 2p)}$.  
The vectors  $e_{n}^{i}, n \in \mathbb{Z}$ form an orthonormal  basis  of $H_{i}$, $i=1,2,$ where $e_n^i,\,\,n \in \mathbb{Z}$ is defined in a similar fashion as in \eqref{eq: onb}.  If $\phi_{\theta}$ is a rotation in M\"{o}b, then 
$$\pi_{1}(\phi_{\theta})e_{n}^{1} = e^{-i \left( n + a + \frac{\lambda}{2} \right) \theta} e_{n}^{1}, n \geq 0;\,\, \pi_{1}(\phi_{\theta})e_{-n}^{1} = e^{i \left( n + b - \frac{\lambda}{2} \right) \theta} e_{-n}^{1}, n \geq 1$$ 
and
$$\pi_{2}(\phi_{\theta})e_{n}^{2} = e^{-i \left( n + m + \frac{\lambda}{2} \right) \theta} e_{n}^{2}, n \geq 0;\,\,\pi_{2}(\phi_{\theta})e_{-n}^{2} = e^{i \left( n + p - \frac{\lambda}{2} \right) \theta} e_{-n}^{2}, n \geq 1.$$
We can, therefore, find $\theta$ such that $\pi_{1}(\phi_{\theta})$ and $\pi_{2}(\phi_{\theta})$ have distinct eigenvalues with one dimensional eigenspaces described as above.

\begin{lem}\label{lamda is not 1}
Let $T = \left[\begin{array}{ccc}
    T_{1} & S_{1}\\
    S_{2} & T_{2}
\end{array}\right]$ be a homogeneous operator with associated representation $\pi_{1} \oplus \pi_{2},$ where $\pi_{1} = D_{\lambda + 2a}^{+} \oplus D_{2-\lambda + 2b}^{-}$ and $\pi_{2} = D_{\lambda + 2m}^{+} \oplus D_{2-\lambda + 2p}^{-}$. Then the following holds:
\begin{enumerate}
\item[(a)] For $n \in \mathbb{Z},$ there exist $u_{n} \in \mathbb{C}$ such that $T_1e_{n}^1 = u_{n} e_{n+1}^{1}$, where $u_{-1} = 0$ unless $a = 0$ and $b = 0$.
\item[(b)] For $n \in \mathbb{Z},$ there exists $v_{n} \in \mathbb{C}$ such that $T_{2}e_{n}^{2} = v_{n}e_{n+1}^{2}$, where $v_{-1} = 0$ unless $m = 0$ and $p = 0$.
\item[(c)] For $n \geq 0,$ $S_{1}e_{n}^{2}$ belongs to the span closure of the set of vectors $\{ e_{q}^{1} : q \geq 0 \}$ and for $\nolinebreak{n \geq 2},$  $S_{1}e_{-n}^{2}$ belongs to the span closure of the set of vectors $\{e_{-q}^{1} : q \geq 1 \}.$ The vector $S_{1}e_{-1}^{2}$ belongs to the span closure of the set $\{ e_{-q}^{1} : q \geq 1 \}$ unless $p = 0$ and $a = 0$.
\item[(d)] For $n \geq 0,$ $S_{2}e_{n}^{1}$ belongs to the span closure of the set of vectors $\{ e_{q}^{2} : q \geq 0 \}$ and for $n \geq 2,$ $S_{2}e_{-n}^{1}$ belongs to the span closure of the set $\{e_{-q}^{2} : q \geq 1 \}$. The vector $S_{2}e_{-1}^{1}$ belongs to the span closure of the set $\{ e_{-q}^{2} : q \geq 1 \}$ unless $b = 0$ and $m = 0$.
\end{enumerate}
\end{lem}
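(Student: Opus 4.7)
The strategy is to argue directly from the $\mathbb{K}$-isotypic decomposition of $\pi_1 \oplus \pi_2$, using the structural fact already invoked in Section~3: any homogeneous operator satisfies $T(V_n(\pi)) \subseteq V_{n+1}(\pi)$ by \cite[Theorem 5.1]{Homogshift}. My first step is to read off, from the eigenvalue formulas for $\pi_i(\phi_\theta)$ displayed just above the lemma, which isotypic subspace each basis vector $e_n^i$ occupies. A short calculation gives $e_n^1 \in V_{n+a}$ for $n \geq 0$ and $e_n^1 \in V_{n-b}$ for $n \leq -1$, and symmetrically $e_n^2 \in V_{n+m}$ for $n \geq 0$, $e_n^2 \in V_{n-p}$ for $n \leq -1$. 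Each $V_\ell$ is therefore spanned by at most four of the $e_n^i$, and may fail to receive a contribution from a given summand exactly when the required index falls outside the range of that Discrete series.

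With this dictionary in hand, I would prove part (a) by applying $T$ to $e_n^1$ and checking which $H_1$-basis vectors populate the target isotypic subspace. For $n \geq 0$ the target is $V_{n+a+1}$, whose only $H_1$-contribution is $e_{n+1}^1$; for $n \leq -2$ the target is $V_{n-b+1}$, whose only $H_1$-contribution is again $e_{n+1}^1$. Projecting onto $H_1$ then yields $T_1 e_n^1 = u_n e_{n+1}^1$ in both ranges. The single exceptional case is $n=-1$: here the target $V_{-b}$ contains a vector from the holomorphic summand $H^{(\lambda+2a)}$ only if $-b \geq a$, equivalently $a=b=0$, and contains no vector from the anti-holomorphic summand $H^{(2-\lambda+2b)}$ at all (the formal index is $q=0$, which is out of range). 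So unless $a=b=0$, necessarily $u_{-1}=0$. Part (b) is identical with $(a,b)$ replaced by $(m,p)$.

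Parts (c) and (d) follow the same scheme: project $T e_n^2$ onto $H_1$ (respectively $T e_n^1$ onto $H_2$) and enumerate the admissible basis vectors in the target $V_{\ell+1}$. Routine index arithmetic shows that for $n \geq 0$ only vectors $e_q^1$ with $q \geq 0$ can appear, and for $n \geq 2$ only vectors $e_{-q}^1$ with $q \geq 1$ can appear. The boundary value $n=-1$ has target $V_{-p}$, whose holomorphic contribution from $H^{(\lambda+2a)}$ exists only when $p=a=0$; otherwise $S_1 e_{-1}^2$ lies in the closed span of $\{e_{-q}^1 : q \geq 1\}$. Part (d) is symmetric, with the corresponding boundary case $b=m=0$.

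The entire argument is bookkeeping rather than analysis; the only obstacle is disciplined tracking of the four boundary cases $u_{-1}$, $v_{-1}$, $S_1 e_{-1}^2$, and $S_2 e_{-1}^1$, each of which corresponds to the shifted index landing on the absent vector $e_0$ of one of the Discrete series summands. The exceptional clauses in (a)--(d) then align exactly with the arithmetic conditions $a=b=0$, $m=p=0$, $a=p=0$, and $b=m=0$, respectively.
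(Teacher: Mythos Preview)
Your proof is correct and takes essentially the same approach as the paper: both identify the $\mathbb{K}$-isotypic structure via the rotation action $\phi_\theta$ and then track which basis vectors $e_n^i$ populate each $V_\ell$. The paper phrases this by substituting $\phi=\phi_\theta$ into the block intertwining equations \eqref{eqn:2.1.17}--\eqref{eqn:2.1.20} and reading off the eigenvalue relations (invoking the Appendix equations \eqref{basic eq: 7}--\eqref{basic eq: 8} for parts (c) and (d)), whereas you invoke the equivalent abstract statement $T(V_n(\pi))\subseteq V_{n+1}(\pi)$ and project onto $H_1$, $H_2$; the index bookkeeping and boundary-case analysis are identical.
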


\begin{proof}
Homogeneity of $T$ implies that $T_{i}$ and $S_{i}$ satisfy equations \eqref{eqn:2.1.17}, \eqref{eqn:2.1.18}, \eqref{eqn:2.1.19} and \eqref{eqn:2.1.20}.
Substituting $\phi=\phi_{\theta}$ in the equation \eqref{eqn:2.1.17}, we get 
$$\pi_{1}(\phi_{\theta}) T_{1} e_{n}^{1} = e^{- i \left( n + 1 + a + \frac{\lambda}{2} \right) \theta} T_{1}e_{n}^{1}, n \geq 0\,\,\mbox{and}\,\,\pi_{1}(\phi_{\theta}) T_{1} e_{-n}^{1} = e^{i \left( n - 1 + b - \frac{\lambda}{2} \right) \theta} T_{1}e_{-n}^{1}, n \geq 1.$$ 
Therefore, for each $n \in \mathbb{Z},$ there exists $u_{n} \in \mathbb{C}$ such that $T_1e_{n}^1 = u_{n} e_{n+1}^{1},$  $u_{-1} = 0,$ unless $a = 0$ and $b = 0$.


Similarly, we can show that for all $n \in \mathbb{Z},$ there exists $v_{n} \in \mathbb{C}$ such that $T_{2}e_{n}^{2} = v_{n}e_{n+1}^{2},$  $v_{-1} = 0,$ unless $m = 0$ and $p = 0$.
Now from the equation \eqref{basic eq: 7}, we obtain 
\begin{enumerate}
\item for each $n\geq 0,$ $S_{1}e_{n}^{2}$ belongs to the span closure of the set of vectors $\{ e_{q}^{1} : q \geq 0 \},$    
\item for each $n \geq 2,$ $S_{1}e_{-n}^{2}$ belongs to the span closure of the set of vectors $\{e_{-q}^{1} : q \geq 1 \}$ and 
\item except when  $p = 0$ and $a = 0,$ $S_{1}e_{-1}^{2}$ belongs to the span closure of the set of vectors $\{ e_{-q}^{1} : q \geq 1 \}.$
\end{enumerate}

The proof of part (d) is similar to the proof of part (c).
\end{proof}

\begin{lem}\label{b and m are 0}
Suppose $T = \left[\begin{array}{ccc}
    T_{1} & S_{1}\\
    S_{2} & T_{2}
\end{array}\right]$ is a homogeneous operator with associated representation $\pi_{1} \oplus \pi_{2},$ where $\pi_{1} = D_{\lambda+2a}^{+} \oplus D_{2-\lambda}^{-}$ and $\pi_{2} = D_{\lambda}^{+} \oplus D_{2-\lambda+2p}^{-},$ for a pair $a, p$ of positive integers. Then $T$ is reducible. Furthermore, $T = \tilde{T_{1}} \oplus \tilde{T_{2}}$ where $\tilde{T_{1}}$ is a homogeneous operator with associated representation $D_{\lambda + 2a}^{+} \oplus D_{\lambda}^{+}$ and $\tilde{T_{2}}$ is a homogeneous operator with associated representation $D_{2-\lambda}^{-} \oplus D_{2-\lambda + 2p}^{-}$.
\end{lem}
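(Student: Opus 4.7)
The plan is to exhibit the decomposition
\[ H_1 \oplus H_2 \;=\; H^+ \oplus H^-,\qquad H^+ := H^{(\lambda+2a)} \oplus H^{(\lambda)},\quad H^- := H^{(2-\lambda)} \oplus H^{(2-\lambda+2p)}, \]
and show that both summands are $T$-invariant. Granting this, $\tilde{T_1} := T|_{H^+}$ and $\tilde{T_2} := T|_{H^-}$ give the asserted direct-sum decomposition. The homogeneity identity $\phi(T) = (\pi_1 \oplus \pi_2)(\phi)^{\ast} T (\pi_1 \oplus \pi_2)(\phi)$ restricts to each invariant summand, and since $(\pi_1 \oplus \pi_2)|_{H^+}$ is (after a unitary permutation of coordinates) $D_{\lambda+2a}^+ \oplus D_\lambda^+$ while $(\pi_1 \oplus \pi_2)|_{H^-}$ is $D_{2-\lambda}^- \oplus D_{2-\lambda+2p}^-$, the restrictions $\tilde{T_i}$ are automatically homogeneous with the stated associated representations.

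Lemma \ref{lamda is not 1} already supplies almost all of the required invariance. Under $a>0$, $u_{-1}=0$ makes $T_1$ preserve each of $H^{(\lambda+2a)}$ and $H^{(2-\lambda)}$; under $p>0$ (with $m=0$), $v_{-1}=0$ does the same for $T_2$. Part (c) of that lemma gives $S_1 H^{(\lambda)} \subseteq H^{(\lambda+2a)}$ and $S_1 H^{(2-\lambda+2p)} \subseteq H^{(2-\lambda)}$ (the exception in (c) that would require $p=a=0$ is inactive). Part (d) gives $S_2 H^{(\lambda+2a)} \subseteq H^{(\lambda)}$ together with $S_2 e_{-n}^1 \in H^{(2-\lambda+2p)}$ for all $n\geq 2$. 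The single remaining piece is $S_2 e_{-1}^1$, whose exception in part (d) is precisely activated by the present $b=m=0$.

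A $\pi_2(\phi_\theta)$-eigenvalue match, in which $p\geq 1$ rules out any contribution from $e_{-n}^2,\ n\geq 1$, pins the $H^{(\lambda)}$-component of $S_2 e_{-1}^1$ to a scalar multiple of $e_0^2$, so
\[ S_2 e_{-1}^1 \;=\; \beta\, e_0^2 + w,\qquad \beta \in \mathbb{C},\ w \in H^{(2-\lambda+2p)}, \]
and it suffices to prove $\beta=0$. Evaluating equation \eqref{eqn:2.1.19} at $\phi_a$ on $e_{-1}^1$, using $T_1 e_{-1}^1 = 0$, yields $S_2 \pi_1(\phi_a) e_{-1}^1 = (I+\bar a T_2)\pi_2(\phi_a)(\beta e_0^2 + w)$. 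Since $\pi_1(\phi_a) e_{-1}^1$ stays in $H^{(2-\lambda)}$ and $S_2 e_{-m}^1$ has no $H^{(\lambda)}$-component for $m \geq 2$, projecting this equality onto $H^{(\lambda)}$ reduces it to
\[ \beta\, \langle \pi_1(\phi_a) e_{-1}^1, e_{-1}^1\rangle\, e_0^2 \;=\; \beta\, (I+\bar a\, T_2)\,\pi_2(\phi_a)\, e_0^2, \qquad a \in \mathbb{D}. \]

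The main obstacle is extracting $\beta=0$ from this identity. The approach is: assume $\beta\neq 0$, expand both sides in the orthonormal basis $\{e_n^2\}$ using $T_2 e_n^2 = v_n e_{n+1}^2$, and match coefficients to obtain $\langle \pi_2(\phi_a) e_0^2, e_0^2\rangle = \langle \pi_1(\phi_a) e_{-1}^1, e_{-1}^1\rangle$ together with the recursion $\langle \pi_2(\phi_a) e_0^2, e_n^2\rangle = -\bar a\, v_{n-1}\, \langle \pi_2(\phi_a) e_0^2, e_{n-1}^2\rangle$ for all $n\geq 1$ and all $a \in \mathbb{D}$. When $\lambda \neq 1$, the first equation equates the lowest $\mathbb{K}$-type diagonal matrix coefficients of the inequivalent representations $D_\lambda^+$ and $D_{2-\lambda}^-$, whose dependences on $|a|^2$ differ (by the standard Discrete-series matrix-coefficient formulas), giving an immediate contradiction. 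When $\lambda = 1$, one iterates the recursion and compares with the companion recursion obtained by applying equation \eqref{eqn:2.1.20} to $e_0^2$ (which produces its own determination of the same $v_n$); the two determinations are incompatible, again ruling out $\beta\neq 0$. Hence $\beta=0$, the decomposition $T = \tilde{T_1}\oplus\tilde{T_2}$ follows, and the first paragraph completes the argument.
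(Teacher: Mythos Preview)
Your overall plan is exactly the paper's: Lemma~\ref{lamda is not 1} handles everything except the $H^{(\lambda)}$-component of $S_2 e_{-1}^1$, and one must show this component vanishes. (In fact the $\mathbb{K}$-eigenvalue match already forces $w=0$, since with $p\geq 1$ no $e_{-n}^2$ carries the required eigenvalue; the paper records this directly as $S_2 e_{-1}^1 = \alpha_{-1} e_0^2$.)

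The gap is in your $\lambda=1$ case. If by $\phi_a$ you mean the involution---which is the paper's convention---then $e^{i\theta}=-1$ in \eqref{eqn:2.1.19}, so your displayed identity should read $S_2\pi_1(\phi_a)e_{-1}^1 = (-I+\bar a\, T_2)\,\pi_2(\phi_a)\,S_2 e_{-1}^1$. Pairing with $e_0^2$ (and using $T_2^{\ast} e_0^2 = 0$ since $v_{-1}=0$) then gives
\[
\beta\,\bigl[\langle D_{2-\lambda}^{+}(\phi_a^{*}) z^{0}, z^{0}\rangle + \langle D_{\lambda}^{+}(\phi_a) z^{0}, z^{0}\rangle\bigr] \;=\; 0,
\]
a \emph{sum} rather than a difference; for real $a\in(0,1)$ this bracket is nonzero for \emph{every} $\lambda\in(0,2]$, including $\lambda=1$. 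No case split is needed, and this is precisely how the paper proceeds.

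If instead you intended $\phi_{0,a}$ (so that $e^{i\theta}=1$ and your sign is as written), then the $e_0^2$-equation does degenerate at $\lambda=1$, but your proposed remedy via \eqref{eqn:2.1.20} is not justified. Equation \eqref{eqn:2.1.20} applied to $e_0^2$ carries the extra term $\bar a\, S_2\pi_1(\phi)S_1 e_0^2$; at this stage you have not shown $S_1 e_0^2=0$ (when the integer parameter $a$ equals $1$ it need not vanish), so the ``companion determination'' of the weights $v_n$ is contaminated by unknown quantities and you have not actually exhibited an incompatibility. The clean fix is to use the involution and the displayed sum above.
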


\begin{proof} 
Homogeneity of $T$ implies that the operators $T_{i}$ and $S_{i}$ satisfy equations \eqref{eqn:2.1.17}, \eqref{eqn:2.1.18}, \eqref{eqn:2.1.19} and \eqref{eqn:2.1.20}.
Since $a \neq 0$ and $p \neq 0$, from Lemma \ref{lamda is not 1}, it follows  that
\begin{enumerate}
\item[(a)] for $n\geq 0$, $T_{i}e_{n}^{i}$ is in the span closure of $\{e_{q}^{i} : q \geq 0 \}$,  $i=1,2,$
\item[(b)] for $n\geq 1$, $T_{i}e_{-n}^{i}$ is in the span closure of $\{e_{-q}^{i} : q \geq 1 \}$, $i=1,2,$
\item[(c)] for $n \geq 0,$ $S_{1}e_{n}^{2}$ is in the span closure of $\{e_{q}^{1} : q \geq 0 \}$ and 
\item[(d)]  for $n \geq 1$, $S_{1}e_{-n}^{2}$ is in the span closure of $\{e_{-q}^{1} : q \geq 1 \}$.
\end{enumerate}
From the equation \eqref{basic eq: 8}, it follows that (i) for  $n\geq 0,$ there exists $\alpha_n \in \mathbb{C}$ such that $S_{2}e_{n}^{1} = \alpha_{n} e_{n+1+a}^{2},$ (ii) for $n\geq p+2,$ there exists $\alpha_{-n} \in \mathbb{C}$ such that $S_{2} e_{-n}^{1} = \alpha_{-n}e_{-n+p+1}^{1},$ (iii) for $2 \leq n \leq p + 1,$ $S_{2} e_{-n}^{1} = 0$ and (iv) there exists $\alpha_{-1} \in \mathbb{C}$ such that $S_{2}e_{-1}^{1} = \alpha_{-1}e_{0}^{2}$.

%
Now applying \hyperlink{algorithm 2}{Algorithm $2$} from the Appendix, for $i = -1$ and $j = 0$, we obtain
\begin{equation*}
\alpha_{-1} \left\langle D_{2-\lambda}^{+}(\phi_{a}^{*}) z^{0}, z^{0} \right\rangle + \alpha_{-1} \left\langle D_{\lambda}^{+}(\phi_{a}) z^{0}, z^{0} \right\rangle = 0. 
\end{equation*}
If $a$ is real, then $\phi_{a}^{*} = \phi_{a}$. An easy computation shows that  $\left\langle D_{2-\lambda}^{+}(\phi_{a}) z^{0}, z^{0} \right\rangle + \left\langle D_{\lambda}^{+}(\phi_{a}) z^{0}, z^{0} \right\rangle \neq 0$, $a \in (0,1).$ In  consequence $\alpha_{-1} = 0$.

Let $\tilde{H_{1}}$ and $\tilde{H_{2}}$ be the closed subspaces of $H$  spanned by the orthonormal set of vectors 
\begin{equation} \label{eqn:2.31a}
\left\lbrace \left(\begin{array}{ccc}
    e_{n}^{1}\\
     0
\end{array}\right), \left(\begin{array}{ccc}
     0\\
     e_{n}^{2}
\end{array}\right) : n \geq 0 \right\rbrace\mathbf ,\,\,\,\, \left\lbrace \left(\begin{array}{ccc}
    e_{-n}^{1}\\
     0
\end{array}\right), \left(\begin{array}{ccc}
     0\\
     e_{-n}^{2}
\end{array}\right) : n \geq 1 \right\rbrace,
\end{equation} 
respectively. 

We have $T = \tilde{T_{1}} \oplus \tilde{T_{2}},$ where $\tilde{T_{i}}$ is an operator on $\tilde{H_{i}}$, $i=1,2$. Also note that $\tilde{H_{i}}$ is invariant under $\pi$. So, $\tilde{T_{1}}$ is a homogeneous operator with associated representation $D_{\lambda + 2a}^{+} \oplus D_{\lambda}^{+}$ and $\tilde{T_{2}}$ is a homogeneous operator with associated representation $D_{2-\lambda}^{-} \oplus D_{2-\lambda+2p}^{-}$.
\end{proof}

\begin{lem}\label{a,b and m are 0}
Suppose $T = \left[\begin{array}{ccc}
    T_{1} & S_{1}\\
    S_{2} & T_{2}
\end{array}\right]$ is a homogeneous operator with associated representation $\pi_{1} \oplus \pi_{2},$ where $\pi_{1} = D_{\lambda}^{+} \oplus D_{2-\lambda}^{-}$ and $\pi_{2} =D_{\lambda}^{+} \oplus D_{2-\lambda+2p}^{-}$ and  $p$ is some positive integer.  Then $T$ is reducible. Furthermore,  $T = \tilde{T_{1}} \oplus \tilde{T_{2}},$ where $\tilde{T_{1}}$ is a homogeneous operator with associated representation $D_{\lambda}^{+} \oplus D_{\lambda}^{+}$ and $\tilde{T_{2}}$ is a homogeneous operator with associated representation $D_{2-\lambda}^{-} \oplus D_{2-\lambda + 2p}^{-}$ or $T = T_1 \oplus T_2$.
\end{lem}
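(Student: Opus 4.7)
I would adapt the argument of Lemma \ref{b and m are 0} to the present parameters $a=b=m=0$, $p>0$. Homogeneity of $T$ yields equations \eqref{eqn:2.1.17}--\eqref{eqn:2.1.20}, and Lemma \ref{lamda is not 1} records that $T_{1}e_{n}^{1}=u_{n}e_{n+1}^{1}$ and $T_{2}e_{n}^{2}=v_{n}e_{n+1}^{2}$, with $v_{-1}=0$ (since $p\neq 0$), while $u_{-1}$ is \emph{not} forced to vanish (since $a=b=0$). Parts~(c) and~(d) of the same lemma show that $S_{1}$ and $S_{2}$ respect the positive/negative splitting of indices \emph{except} for the single entry $S_{2}e_{-1}^{1}$, and a direct $K$-equivariance computation applied to \eqref{eqn:2.1.19} at $\phi_{\theta}$ forces $S_{2}e_{-1}^{1}=\alpha_{-1}e_{0}^{2}$ for some $\alpha_{-1}\in\mathbb{C}$.

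The heart of the argument is to apply \hyperlink{algorithm 2}{Algorithm $2$} of the Appendix at $i=-1$, $j=0$ to the homogeneity equation \eqref{eqn:2.1.19}, precisely as in the proof of Lemma \ref{b and m are 0}. In that lemma $u_{-1}$ was already zero and the resulting identity collapsed to
\[
\alpha_{-1}\bigl[\langle D_{2-\lambda}^{-}(\phi_{a}^{*})z^{0},z^{0}\rangle + \langle D_{\lambda}^{+}(\phi_{a})z^{0},z^{0}\rangle\bigr]=0,
\]
whose bracketed factor is nonzero. In the present setting the same expansion will produce an identity coupling $\alpha_{-1}$ with $u_{-1}$ (and with the boundary weight of $S_{1}$ acting on $e_{0}^{2}$).

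The proof then splits into two cases. If $u_{-1}=0$, the identity reduces to the one above and forces $\alpha_{-1}=0$. With both seam-crossings removed, the subspaces built from the non-negative and strictly negative indices respectively (as in \eqref{eqn:2.31a}) are simultaneously invariant under $T$ and under $\pi_{1}\oplus\pi_{2}$, yielding the first alternative $T=\tilde{T_{1}}\oplus\tilde{T_{2}}$ with associated representations $D_{\lambda}^{+}\oplus D_{\lambda}^{+}$ and $D_{2-\lambda}^{-}\oplus D_{2-\lambda+2p}^{-}$. If instead $u_{-1}\neq 0$, I would aim to show that every off-diagonal weight of $S_{1}$ and $S_{2}$ vanishes, so that $T=T_{1}\oplus T_{2}$. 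This should follow by rerunning \hyperlink{algorithm 1}{Algorithm $1$} and \hyperlink{algorithm 2}{Algorithm $2$} at further pairs $(i,j)$ spanning the seam and propagating the vanishing through the shift structure, in the spirit of Theorems \ref{when rep. has three comp, S_1 = 0} and \ref{when rep. has three comp, and mu is not 0, S_2 = 0}.

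The main obstacle is the second case. Unlike in Lemma \ref{b and m are 0}, the $K$-isotypes on either side of the origin in $H_{1}$ both carry dimension two and are welded together by the nonzero $u_{-1}$, so removing every off-diagonal entry requires a careful bookkeeping of the matrix coefficients of $D_{\lambda}^{+}$ and $D_{2-\lambda+2p}^{-}$ at indices adjacent to the origin together with the weights of $T_{1}$ and $T_{2}$ there; I expect this to be the most delicate calculation of the proof.
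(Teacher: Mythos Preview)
Your overall strategy---split on whether $u_{-1}$ vanishes---matches the paper, but you have misplaced where that split actually enters. The Algorithm~2 computation at $i=-1$, $j=0$ does \emph{not} couple $\beta_{-1}$ (your $\alpha_{-1}$) with $u_{-1}$: the only term in \eqref{eqn:2.1.19} that could carry $u_{-1}$ is $\overline{a}\,S_{2}\pi_{1}(\phi_{a})T_{1}e_{-1}^{1}=\overline{a}\,u_{-1}S_{2}\pi_{1}(\phi_{a})e_{0}^{1}$, and its inner product with $e_{0}^{2}$ vanishes because $\langle \pi_{1}(\phi_{a})e_{0}^{1},e_{-1}^{1}\rangle=0$ (the vectors $e_{0}^{1}$ and $e_{-1}^{1}$ lie in the two different irreducible summands of $\pi_{1}$). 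Hence $\beta_{-1}=0$ unconditionally. In fact the paper runs Algorithm~2 at several further pairs $(i,j)$ and kills \emph{all} of $S_{2}$ outright, with no reference to $u_{-1}$.

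The case split on $u_{-1}$ arises only afterwards, in the analysis of $S_{1}$ via \eqref{eqn:2.1.18}. Algorithm~1 at $i=n\geq 0$, $j=0$ gives $\alpha_{n}=0$ immediately, and then Algorithm~1 at $i=-n$, $j=0$ yields $\alpha_{-n}\,u_{-1}\,\langle \pi_{1}(\phi_{a})e_{-n-p+1}^{1},e_{-1}^{1}\rangle=0$, i.e.\ $\alpha_{-n}u_{-1}=0$ for all $n\geq 1$. So if $u_{-1}\neq 0$ you get $S_{1}=0$ (hence $T=T_{1}\oplus T_{2}$) in one stroke---the case you flagged as ``most delicate'' is in fact the easy one. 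If $u_{-1}=0$, the $\alpha_{-n}$ need not vanish, but since $S_{1}e_{-n}^{2}\in\operatorname{span}\{e_{q}^{1}:q\leq -1\}$ for $n\geq 1$ (Lemma~\ref{lamda is not 1}(c), using $p>0$) and $S_{1}e_{n}^{2}=0$ for $n\geq 0$, the operator $S_{1}$ already respects the splitting $\tilde{H}_{1}\oplus\tilde{H}_{2}$, and one reads off $T=\tilde{T}_{1}\oplus\tilde{T}_{2}$.
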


\begin{proof}
Homogeneity of $T$ implies that the operators $T_{i}$ and $S_{i}$ satisfy equations \eqref{eqn:2.1.17}, \eqref{eqn:2.1.18}, \eqref{eqn:2.1.19} and \eqref{eqn:2.1.20}.
Recall  that $T_{1}$ and $T_2$ are weighted shifs with respect to the orthonormal basis $\{e_{n}^{1} \}$ and $\{e_n^2\},$ respectively by virtue of Lemma \ref{lamda is not 1}. Let $\{u_{n}\}$ and $\{v_n\}$ be the corresponding weights for $T_{1}$ and $T_2,$ respectively. Since $p > 0$, it follows form Lemma \ref{lamda is not 1} that $v_{-1} = 0$.

From the equation \eqref{basic eq: 8}, 
we obtain that (i) for all $n \geq -1$ there exist $\beta_{n} \in \mathbb{C}$ such that $S_{2} e_{n}^{1} = \beta_{n} e_{n+1}^{2},$ (ii) for all $n \geq p+2$ there exists $\beta_{-n} \in \mathbb{C}$ such that $S_{2}e_{-n}^{1} = \beta_{-n}e_{-n+p+1}^{2}$ and (iii) $S_{2}e_{-n}^{2} = 0$, for all $1 < n < p+2$.

Applying \hyperlink{algorithm 2}{Algorithm $2$} for $i = n,$ $n \geq -1$ and $j = 0$, we get
\begin{equation*}
\beta_{-1}\left\langle \pi_{1}(\phi_{a}) e_{n}^{1}, e_{-1}^{1} \right\rangle + \beta_{n} \left\langle \pi_{2}(\phi_{a}) e_{n+1}^{2}, e_{0}^{2} \right\rangle = 0.
\end{equation*}
Now, if $n \geq 0$, then from the preceding equation, we find that $\beta_{n} \left\langle \pi_{2}(\phi_{a}) e_{n+1}^{2}, e_{0}^{2} \right\rangle = 0$ and therefore $\beta_{n} = 0$ for all $n \geq 0$.
For $n = -1,$ from the same equation, we have  
\begin{equation*}
\beta_{-1}\left\langle \pi_{1}(\phi_{a}) e_{-1}^{1}, e_{-1}^{1} \right\rangle + \beta_{-1} \left\langle \pi_{2}(\phi_{a}) e_{0}^{2}, e_{0}^{2} \right\rangle = 0.
\end{equation*} 
However, it is easily verified that $\left\langle \pi_{1}(\phi_{a}) e_{-1}^{1}, e_{-1}^{1} \right\rangle + \left\langle \pi_{2}(\phi_{a}) e_{0}^{2}, e_{0}^{2} \right\rangle \not = 0.$ Therefore, $\beta_{-1} = 0$.

Again applying \hyperlink{algorithm 2}{Algorithm $2$} for $i = -1$ and $j = -n+p+1$, $n \geq p+2,$ we observe that
$\beta_{-n} \left\langle \pi_{1}(\phi_{a})e_{-1}^{1}, e_{-n}^{1} \right\rangle = 0.$
Consequently, we have $\beta_{-n} = 0,$ for $n \geq p+2$. This proves that $S_{2}e_{-n}^{1} = 0$, for all $n \geq 2$ and therefore $S_{2} = 0$.

Form the equation \eqref{basic eq: 7}, we have
 (i) for all $n \geq 0,$ there exists $\alpha_{n} \in \mathbb{C}$ such that $S_{1} e_{n}^{2} = \alpha_{n}e_{n+1}^{1}$ and (ii) for all $n \geq 1,$ there exists $\alpha_{-n} \in \mathbb{C}$ such that $S_{1}e_{-n}^{2} = \alpha_{-n}e_{-n-p+1}^{1}$.


Applying \hyperlink{algorithm 1}{Algorithm $1$} for $i = n \geq 0$ and $j = 0,$ we get
$\alpha_{n} \left\langle \pi_{1}(\phi_{a}) e_{n+1}^{1}, e_{0}^{1} \right\rangle = 0.$
Consequently, for all $n \geq 0,$ we see that $\alpha_{n} = 0.$ This proves that $S_{1}e_{n}^{2} = 0,$ $n \geq 0$ .

Again, applying \hyperlink{algorithm 1}{Algorithm $1$} for $i = -n$, $n \geq 1,$ and $j = 0,$ we get
\begin{equation*}
\alpha_{-n}u_{-1} \left\langle \pi_{1}(\phi_{a}) e_{-n-p+1}^{1}, e_{-1}^{1} \right\rangle = 0.
\end{equation*}
It follows  that $\alpha_{-n}u_{-1} = 0,$ $n \geq 1$. Hence if $u_{-1} \neq 0$, then  for all $n \geq 1,$  we see that  $\alpha_{-n} = 0$ and therefore  $S_{1} = 0$. Putting all of these together, we infer that $T = T_{1} \oplus T_{2},$ where $T_{1}$ is a homogeneous operator with associated representation $\pi_{1}$ and $T_{2}$ is a homogeneous operator with associated representation $\pi_{2}$.

Let $\tilde{T_1}, \tilde{T_2}$ be the operators which were constructed in Lemma \ref{b and m are 0}.  If $u_{-1} = 0$, then we have $T = \tilde{T_{1}} \oplus \tilde{T_{2}}.$ The operators  $\tilde{T_1}$ and $\tilde{T_{2}}$ are homogeneous,  and in this case, the associated representations are  $D_{\lambda}^{+} \oplus D_{\lambda}^{+}$  and $D_{2-\lambda}^{-} \oplus D_{2-\lambda+2p}^{-},$ respectively.
\end{proof}

\begin{lem}\label{a,b,m and p are 0}
Let $T = \left[\begin{array}{ccc}
    T_{1} & S_{1}\\
    S_{2} & T_{2}
\end{array}\right]$ be a homogeneous operator with associated representation $\pi_{1} \oplus \pi_{2}$ where $\pi_{1} = D_{\lambda}^{+} \oplus D_{2-\lambda}^{-}$ and $\pi_{2} =D_{\lambda}^{+} \oplus D_{2-\lambda}^{-}$. Then $S_1 = 0$ and $S_2 = 0$.
\end{lem}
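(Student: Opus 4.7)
The plan is to follow the template established by Lemmas \ref{b and m are 0} and \ref{a,b and m are 0}, but the new feature now is that, with $a=b=m=p=0$, every special case of Lemma \ref{lamda is not 1} is active simultaneously, so both $S_1$ and $S_2$ carry an extra ``bridge'' term across the index $0$. First I would record the structural consequences of Lemma \ref{lamda is not 1}: the operators $T_1, T_2$ are bilateral weighted shifts on the orthonormal bases $\{e_n^1\}, \{e_n^2\}$ with weight sequences $\{u_n\}, \{v_n\}$ (with $u_{-1}, v_{-1}$ not necessarily zero), and
\begin{align*}
S_1 e_n^2 &= \alpha_n e_{n+1}^1 \text{ for } n\geq 0,\\
S_1 e_{-n}^2 &= \alpha_{-n} e_{-n+1}^1 \text{ for } n\geq 2, \text{ and } S_1 e_{-1}^2 = \alpha_{-1} e_0^1,
\end{align*}
with the analogous description for $S_2$ and the scalars $\beta_n$. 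The appearance of $\alpha_{-1}e_0^1$ and $\beta_{-1}e_0^2$ is what distinguishes this case; these are the bridge terms.

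Next I would analyze $S_2$ by applying Algorithm $2$ from the Appendix to equation \eqref{eqn:2.1.19}. Taking $i = n$, $j = 0$, the same calculation as in the proof of Lemma \ref{a,b and m are 0} produces
\[
\beta_{-1}\bigl\langle \pi_1(\phi_a) e_n^1, e_{-1}^1 \bigr\rangle + \beta_n \bigl\langle \pi_2(\phi_a) e_{n+1}^2, e_0^2 \bigr\rangle = 0, \qquad n\geq -1.
\]
For $n\geq 1$ the first matrix coefficient is identically zero in $a$ (the $K$-weights of $e_n^1$ and $e_{-1}^1$ under $\pi_1$ disagree under rotation), so $\beta_n = 0$. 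For $n = -1$ the identity collapses to $\beta_{-1}\bigl(\langle \pi_1(\phi_a)e_{-1}^1, e_{-1}^1\rangle + \langle \pi_2(\phi_a)e_0^2,e_0^2\rangle\bigr) = 0$, and I would verify, using the explicit matrix coefficients of $D_\lambda^+$ and $D_{2-\lambda}^-$, that the sum inside the parentheses is a non-vanishing real-analytic function of $a \in (0,1)$; hence $\beta_{-1}=0$. Substituting back into the $n=0$ identity then forces $\beta_0 = 0$. To eliminate $\beta_{-n}$ for $n \geq 2$, apply Algorithm $2$ with $i = -1, j = -n+1$; the resulting equation yields $\beta_{-n}v_{-1}\langle\pi_2(\phi_a) e_{-n+1}^2, e_0^2\rangle = 0$, which handles the case $v_{-1}\neq 0$ directly, and an independent choice $(i,j) = (-n+1, -n+1)$ (or a higher-order coefficient extraction as in Lemma \ref{when rep. has three comp, S_1 = 0}) disposes of the remaining possibility $v_{-1}=0$. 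Thus $S_2 = 0$.

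A completely symmetric argument using Algorithm $1$ applied to equation \eqref{eqn:2.1.18}, together with the roles of $T_1, T_2$ interchanged, yields $\alpha_n = 0$ for all $n\in\mathbb{Z}$, so $S_1 = 0$ as well.

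The main obstacle will be the verification that the sum of matrix coefficients
\[
\bigl\langle \pi_1(\phi_a) e_{-1}^1, e_{-1}^1 \bigr\rangle + \bigl\langle \pi_2(\phi_a) e_0^2, e_0^2 \bigr\rangle
\]
and its analogue for the $S_1$ computation do not vanish identically in $a$; once this non-vanishing is established the bridge terms $\alpha_{-1}, \beta_{-1}$ are forced to zero, and the remainder of the argument parallels Lemmas \ref{b and m are 0} and \ref{a,b and m are 0}. A secondary, but routine, difficulty is handling the possibility $u_{-1} = 0$ or $v_{-1} = 0$ when disposing of the $\alpha_{-n}, \beta_{-n}$ with $n\geq 2$; this is resolved by a second application of the relevant algorithm with a different index choice, which produces an independent constraint whose coefficient is nonzero irrespective of the value of $u_{-1}$ or $v_{-1}$.
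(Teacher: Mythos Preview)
Your plan is essentially the paper's own argument: exploit Lemma~\ref{lamda is not 1} to see that $S_1,S_2$ are weighted shifts, then apply Algorithms~1 and~2 with well-chosen indices to kill each weight, the crucial step being the non-vanishing of $\langle \pi(\phi_a) e_{-1}, e_{-1}\rangle + \langle \pi(\phi_a) e_0, e_0\rangle$, which you correctly isolate as the mechanism for eliminating the bridge coefficients $\alpha_{-1},\beta_{-1}$.

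There is, however, a miscalculation in your treatment of $\beta_{-n}$ for $n\ge 2$. Applying Algorithm~2 with $i=-1$, $j=-n+1$ does \emph{not} produce $\beta_{-n} v_{-1}\langle \pi_2(\phi_a) e_{-n+1}^2, e_0^2\rangle = 0$; in fact that matrix coefficient vanishes identically since $e_{-n+1}^2$ and $e_0^2$ lie in the orthogonal summands $D_{2-\lambda}^-$ and $D_\lambda^+$. If you carry out the computation correctly at these indices you obtain instead
\[
\beta_{-n}\,\langle \pi_1(\phi_a) e_{-1}^1, e_{-n}^1\rangle \;=\; 0,
\]
with no factor of $v_{-1}$ at all, and this forces $\beta_{-n}=0$ directly. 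Thus your ``secondary, but routine, difficulty'' concerning $u_{-1}=0$ or $v_{-1}=0$ is an artifact of the miscomputed equation and simply does not arise. The paper handles all of $\alpha_n$ for $n\le -1$ in one stroke by fixing $i=-1$ in Algorithm~1 and letting $j$ run over the relevant indices; the resulting relation couples $\alpha_n$ with $\alpha_{-1}$, and once $\alpha_{-1}=0$ is secured from the $n=-1$ instance (via the non-vanishing sum you identified), the remaining $\alpha_n$ drop out immediately.
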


\begin{proof}
In this case $\pi_{1} = \pi_{2}$. Denote $\pi_1 = \pi_2 = \pi$ and $e_n^1 = e_n^2 = e_n.$ Homogeneity of $T$ implies that the operators $T_{i}$ and $S_{i}$ satisfy equations \eqref{eqn:2.1.17}, \eqref{eqn:2.1.18}, \eqref{eqn:2.1.19} and \eqref{eqn:2.1.20}.
Repeating an argument similar to the one in Lemma \ref{lamda is not 1}, we find that $T_{1},$ $T_2,$ $S_1$ and $S_2$ are weighted shifts with respect to the orthonormal basis $\{e_{n}\}$. Let $\{u_{n}\}$, $\{v_n\}$, $\{\alpha_n\}$ and $\{\beta_n\}$ be the weights  for $T_{1}$, $T_2$, $S_1$ and $S_2,$ respectively.

Now we prove that $S_{1} = 0$. Applying \hyperlink{algorithm 1}{Algorithm $1$} for $i = n,$ $n \geq 0$ and $j = 0,$ we obtain
$\alpha_{n} \left\langle \pi(\phi_{a})e_{n+1}, e_{0}\right\rangle = 0.$
This implies that $\alpha_{n} = 0,$  $n \geq 0$.

Again applying \hyperlink{algorithm 1}{Algorithm $1$} for $i = -1$ and $j = n,$ $n \leq -1,$ we get
\begin{equation*}
\alpha_{n} \left\langle \pi(\phi_{a}) e_{-1}, e_{n} \right\rangle + \alpha_{-1} \left\langle \pi(\phi_{a}) e_{0}, e_{n+1} \right\rangle = 0.  
\end{equation*}
%
This implies that $\alpha_{n} = 0,$ $n \leq -1,$ proving that $S_1 = 0$.
A similar computation shows that $S_{2} = 0.$  
\end{proof}

\begin{thm}
Suppose $T = \left[\begin{array}{ccc}
    T_{1} & S_{1}\\
    S_{2} & T_{2}
\end{array}\right]$ is a homogeneous operator with associated representation $\pi_{1} \oplus \pi_{2}$ where $\pi_{1} = D_{\lambda + 2a}^{+} \oplus D_{2-\lambda + 2b}^{-}$ and $\pi_{2} = D_{\lambda + 2m}^{+} \oplus D_{2-\lambda + 2p}^{-}$.  Then either $T = \tilde{T_{1}} \oplus \tilde{T_{2}},$ where $\tilde{T_{1}}$ is a homogeneous operator with associated representation $D_{\lambda + 2a}^{+} \oplus D_{\lambda + 2m}^{+}$ and $\tilde{T_{2}}$ is a homogeneous operator with associated representation $D_{2-\lambda + 2b}^{-} \oplus D_{2-\lambda + 2p}^{-}$ or $T = T_1 \oplus T_2$. In particular, $T$ is reducible. 
\end{thm}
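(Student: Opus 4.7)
The theorem extends Lemma~\ref{b and m are 0}, Lemma~\ref{a,b and m are 0} and Lemma~\ref{a,b,m and p are 0} to arbitrary non-negative integer quadruples $(a,b,m,p)$. The plan is to reduce the general case to those three lemmas by a case analysis on which of $a,b,m,p$ vanish, combined with two symmetries of the problem.

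The first symmetry is the swap of $\pi_{1}$ and $\pi_{2}$: conjugating $T$ by the block flip $H_{1}\oplus H_{2}\to H_{2}\oplus H_{1}$ yields a homogeneous operator with associated representation $\pi_{2}\oplus\pi_{1}$, which corresponds to the quadruple $(m,p,a,b)$. The second is the adjoint: by Proposition~\ref{proposition 1}, $T^{*}$ is homogeneous with associated representation $\pi^{\#}$, and since $(D^{+}_{\mu})^{\#}$ is equivalent to $D^{-}_{\mu}$, this operation sends $(\lambda,a,b,m,p)$ to $(2-\lambda,b,a,p,m)$. Reducibility, and both of the two alternative conclusions of the theorem, are preserved by these symmetries, after relabelling the summands $\tilde{T}_{1},\tilde{T}_{2}$ if necessary. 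With these symmetries in hand, Lemma~\ref{a,b,m and p are 0} handles the all-zero case, Lemma~\ref{a,b and m are 0} together with its images under swap and adjoint covers every case in which three of $a,b,m,p$ vanish, and Lemma~\ref{b and m are 0} together with its swap covers the two-zero patterns $\{b,m\}$ and $\{a,p\}$.

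For the remaining cases, i.e., the two-zero patterns $\{a,b\},\{m,p\},\{a,m\},\{b,p\}$, the one-zero patterns, and the case in which none of $a,b,m,p$ vanishes, I would repeat the scheme used in Lemma~\ref{b and m are 0} mutatis mutandis. Concretely, Lemma~\ref{lamda is not 1} already gives that $T_{1},T_{2}$ are weighted shifts on the isotypic bases $\{e_{n}^{1}\},\{e_{n}^{2}\}$ and localises the supports of $S_{1}$ and $S_{2}$. I would then apply the algorithms in the Appendix to the homogeneity equations \eqref{eqn:2.1.17}--\eqref{eqn:2.1.20}, producing, for each weight of $S_{i}$ that would link a holomorphic summand in one $\pi_{j}$ to an anti-holomorphic summand in the other, an equation whose leading coefficient in $r=|a|^{2}$ is a nonzero matrix coefficient of $\pi_{1}(\phi_{a})$ or $\pi_{2}(\phi_{a})$. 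This forces every such cross-type weight of $S_{1}$ and $S_{2}$ to vanish, yielding the decomposition $T=\tilde{T}_{1}\oplus\tilde{T}_{2}$ promised by the theorem. When, in addition, a boundary weight $u_{-1}$ or $v_{-1}$ is forced to vanish by the choice of parameters, as happened in Lemma~\ref{a,b and m are 0}, the same recursions make $S_{1}=S_{2}=0$ and the alternative conclusion $T=T_{1}\oplus T_{2}$ is obtained.

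The main obstacle is the careful bookkeeping at the boundary indices $n=-1,0$. Depending on which of $a,b,m,p$ are zero, some of the discrete series parameters collapse to $\lambda$ or $2-\lambda$, so certain boundary weights $u_{-1},v_{-1}$ of $T_{1},T_{2}$ and certain boundary entries of $S_{1},S_{2}$ may vanish automatically, altering which recursion provides the vanishing of a given cross-type coefficient. Thus, although the method is formally uniform with that of Lemma~\ref{b and m are 0}, each zero pattern must be treated separately to verify that the relevant matrix-coefficient factor in the corresponding recursion is nonzero. Once this case-by-case verification is complete, the theorem follows by combining the resulting splittings with the swap and adjoint symmetries invoked at the outset.
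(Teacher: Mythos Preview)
Your overall plan---case analysis modulo the swap and adjoint symmetries, with the three Lemmas as base cases---is exactly the paper's strategy. However, your handling of what you call the ``remaining cases'' is both more laborious than necessary and, in two of those cases, incomplete.

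For the no-zero case, the four one-zero cases, and the two-zero patterns $\{a,m\}$ and $\{b,p\}$, no algorithm computation is needed at all: a direct check of the exceptional clauses in Lemma~\ref{lamda is not 1}(a)--(d) shows that every one of them is avoided, so $u_{-1}=v_{-1}=0$ and $S_1,S_2$ already respect the $\tilde H_1/\tilde H_2$ split. The decomposition $T=\tilde T_1\oplus\tilde T_2$ is then immediate from Lemma~\ref{lamda is not 1} alone. The paper simply reads this off.

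The genuine gap is in the two-zero patterns $\{a,b\}$ and $\{m,p\}$. Take $(a,b,m,p)=(0,0,m,p)$ with $m,p>0$. Lemma~\ref{lamda is not 1} already shows that $S_1$ and $S_2$ respect the split and that $v_{-1}=0$; the only obstruction to $T=\tilde T_1\oplus\tilde T_2$ is the weight $u_{-1}$ of $T_1$, which Lemma~\ref{lamda is not 1}(a) does \emph{not} force to vanish since $a=b=0$. Your proposed remedy---applying the algorithms to kill cross-type weights of $S_1,S_2$---does not touch $u_{-1}$, which lies in $T_1$, so the argument as written does not close. The paper resolves this with a third symmetry you did not invoke: regrouping the four irreducible summands as $\pi=(D_\lambda^+\oplus D_{2-\lambda+2p}^-)\oplus(D_{\lambda+2m}^+\oplus D_{2-\lambda}^-)$. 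In this new block structure the map $e_{-1}^1\mapsto u_{-1}e_0^1$ becomes an off-diagonal weight of the new $S_2$, and the setup is exactly that of Lemma~\ref{b and m are 0}, whose computation then kills it. Your last paragraph about $u_{-1}$ being ``forced to vanish'' has the logic inverted: the delicate cases are precisely those where Lemma~\ref{lamda is not 1} does \emph{not} force $u_{-1}$ (or $v_{-1}$) to vanish, and there the conclusion is still the $\tilde T_1\oplus\tilde T_2$ split, obtained via the regrouping, not the $T_1\oplus T_2$ alternative.
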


\begin{proof}
We divide the proof into several cases and discuss each case separately. Let $\tilde{H_{1}},$  $\tilde{H_{2}}$ be as in \eqref{eqn:2.31a} and 
$\tilde{T_{i}}= T_{|\tilde{H_i}},$  $i=1,2.$ 
\begin{enumerate}
\item[(i)] Assume that none of the $a, b, m, p$ are zero.  Then from Lemma \ref{lamda is not 1}, it follows that $T = \tilde{T_{1}} \oplus \tilde{T_{2}}.$  Also note that $\tilde{H_{i}}$ is invariant under $\pi$. So, $\tilde{T_{1}}$ is a homogeneous operator with associated representation $D_{\lambda + 2a}^{+} \oplus D_{\lambda + 2m}^{+}$ and $\tilde{T_{2}}$ is a homogeneous operator with associated representation $D_{2-\lambda + 2b}^{-} \oplus D_{2-\lambda + 2p}^{-}$.
\item[(ii)] Assume that exactly one of $a, b, m, p$ is non-zero. Then from Lemma \ref{lamda is not 1}, it follows that $T = \tilde{T_{1}} \oplus \tilde{T_{2}}.$ 
\item[(iii)] It follows from Lemma \ref{lamda is not 1} that $T = \tilde{T_{1}} \oplus \tilde{T_{2}}$ if either  $a = 0, b \neq 0, m = 0, p \neq 0$ or $a \neq 0, b = 0, m \neq 0,  p = 0.$
\item[(iv)] The case of  $a \neq 0,  b = 0, p \neq 0, m = 0$ is precisely Lemma \ref{b and m are 0}.  
\item[(v)] Assume that $a = 0, b \neq 0, m \neq 0, p = 0.$ Since  $T^{*}$ is a homogeneous operator with associated representation $\pi_{1}^{\#} \oplus \pi_{2}^{\#},$ the proof follows by applying Lemma \ref{b and m are 0} to  $T^*$. 
\item[(vi)] Assume that $a = 0, b = 0, m \neq 0, p \neq 0$. The associated representation of the operator $T$ is $D_{\lambda}^{+} \oplus D_{2-\lambda}^{-} \oplus D_{\lambda + 2m}^{+} \oplus D_{2-\lambda + 2p}^{-} = \left(D_{\lambda}^{+} \oplus  D_{2-\lambda + 2p}^{-}\right) \oplus \left(D_{\lambda + 2m}^{+} \oplus  D_{2-\lambda}^{-}\right)$. Now, the proof follows form Lemma \ref{b and m are 0}.
\item[(vii)] Assume that $a \neq 0, b \neq 0, m = 0, p = 0$. This is same as (vi).
\item[(viii)] The cases of $a = 0, b = 0, m = 0, p \neq 0$ and $a = 0, m = 0, p = 0, b \neq 0$ are covered in  Lemma \ref{a,b and m are 0}.
\item[(ix)] In case, $b = 0, m = 0, p = 0, a \neq 0$ or  $a = 0, b = 0, p = 0, m \neq 0,$ the proof is completed  by applying the Lemma \ref{a,b and m are 0} to $T^*.$
\item[(x)] Assume $a = 0, b = 0, m =0, p=0.$ This case is exactly Lemma \ref{a,b,m and p are 0}.
\end{enumerate} 
This is an enumeration of all the sixteen possibilities (each of the integers $a,b,m,p$ is either zero or positive) completing the proof. 
\end{proof}

Now we prove that there is no irreducible homogeneous operator with associated representation $\pi := P_{1,0} \oplus D_{1 + 2m}^{+} \oplus D_{1 + 2k}^-$. The representation space of $\pi$ is $H := L^2(\mathbb{T}) \oplus H^{(1+2m)} \oplus H^{(1+2k)}.$

\begin{lem}\label{Descriotion of S_1 and S_2 when pi_1 = P_1,0}
Suppose $T = \begin{bmatrix}
T_1 & S_1\\
S_2 & T_2
\end{bmatrix}$ is a homogeneous operator with associated representation $\pi_1 \oplus \pi_2,$ where $\pi_1 = P_{1, 0}$ and $\pi_2 = D_{1 + 2m}^+ \oplus D_{1 + 2k}^-$, $m, k \geq 0$. Then we have the following.
\begin{enumerate}
\item[(a)] The operators $T_{1}$ and $T_2$ are weighted shifts with respect 
to the orthonormal basis $\{e_{n}^{1} \}$ and $\{e_n^2\}$ with weights 
$\{u_{n}\}$ and $\{v_n\},$ respectively. Also $T_2 e_{-1}^2 = 0$ except when $m=0$ and $k=0$.
\item[(b)] If $k \geq 1$, then for all $n \geq 0,$ $S_1 e_n^2 = 0$,  and for all $n \geq 1$, $S_1e_{-n}^2 = \alpha_{-n}e_{-n-k+1}^1$ such that $u_{-1} \alpha_{-n} = 0$ where $\alpha_{-n} \in \mathbb{C}$. If $k = 0$, then $S_1 e_n^2 = 0$ for all $n \neq -1$ and $S_1 e_{-1}^2 = \alpha_{-1} e_0^1$ for some $\alpha_{-1} \in \mathbb{C}$.
\item[(c)] If $m > 1$, then $S_2 = 0$. If $m = 1$, then  for $n \leq -1,$ $S_2 e_{n}^1 = 0$  and for $n\geq 0,$ there exists $\beta_n \in \mathbb{C}$ such that $S_2 e_{n}^{1} = \beta_{n} e_{n}^2$ and  $u_{-1} \beta_{n} = 0.$  If $m = 0$, then $S_2 e_n^1 = 0$, for all $n \neq -1$ and $S_2 e_{-1}^1 = \beta_{-1} e_0^2$ for some $\beta_{-1} \in \mathbb{C}$.
\end{enumerate}
\end{lem}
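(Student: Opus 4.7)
The proof follows the template of Theorem \ref{when rep. has three comp, S_1 = 0} and Theorem \ref{when rep. has three comp, and mu is not 0, S_2 = 0}: first use the rotation subgroup to pin down the action of each block on $\mathbb K$-isotypic vectors, then use Algorithm $1$ and Algorithm $2$ from the Appendix, applied to the involutions $\phi_a$, to force the remaining coefficients to vanish.

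For part (a), substituting $\phi=\phi_\theta$ into \eqref{eqn:2.1.17} and \eqref{eqn:2.1.20} shows that each $T_i$ shifts $\mathbb K$-characters by $e^{-i\theta}$. Since every $\mathbb K$-isotypic subspace of $\pi_1=P_{1,0}$ and of each summand of $\pi_2$ is one dimensional, $T_1 e_n^1$ must be a scalar multiple of $e_{n+1}^1$ and similarly $T_2 e_n^2$ is a scalar multiple of the unique basis vector of $H_2$ with shifted character. The shifted character of $e_{-1}^2$ is $e^{i(k-1/2)\theta}$, and a direct inspection of the characters of $D_{1+2m}^{+}\oplus D_{1+2k}^{-}$ shows that no basis vector of $H_2$ carries this character unless $m=k=0$, which yields $T_2 e_{-1}^2=0$ in all other cases.

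For parts (b) and (c), the analogous rotation analysis applied to \eqref{eqn:2.1.18} and \eqref{eqn:2.1.19} determines, for each $e_n^j$, the unique candidate target of the $S_i$-image (or shows that none exists, forcing the image to be zero). Character matching gives $S_1 e_n^2=\alpha_n e_{n+m+1}^1$ for $n\geq 0$ and $S_1 e_{-n}^2=\alpha_{-n} e_{-n-k+1}^1$ for $n\geq 1$, and splits the description of $S_2$ into the sub-cases $m>1$, $m=1$, and $m=0$ depending on whether the target index $n+1-m$ lies in the admissible range of $D_{1+2m}^{+}$ or $D_{1+2k}^{-}$. To force the further vanishing, I would then apply Algorithm $1$ to \eqref{eqn:2.1.18} (respectively Algorithm $2$ to \eqref{eqn:2.1.19}) with the involution $\phi_a$, pick the indices $(i,j)$ so that the resulting power-series identity in $r=|a|^2$ isolates a single $\alpha_n$ or $\beta_n$, and use the explicit matrix coefficients of $P_{1,0}(\phi_a)$ together with those of the two discrete-series summands of $\pi_2$. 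For the ``generic'' choices of $n$ the leading-order identity has the form $\alpha_n M(\phi_a)=0$ with $M\neq 0$, forcing $\alpha_n=0$; for the indices at which the image would sit at $e_{-1}^1$ or $e_0^1$, the vanishing $T_2 e_{-1}^2=0$ coming from (a) removes one term on the right hand side of \eqref{eqn:2.1.18}, so the surviving identity couples $\alpha_{-n}$ with the weight $u_{-1}$ of $T_1$ and yields $u_{-1}\alpha_{-n}=0$. The proof of (c) is dual, with $S_2,\beta_n$ and Algorithm $2$ playing the roles of $S_1,\alpha_n$ and Algorithm $1$.

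The main obstacle is the boundary bookkeeping. The character lattice of $P_{1,0}$ is all of $\mathbb Z+\tfrac{1}{2}$ with no gap, so the partition of $H_2$ into the summands $D_{1+2m}^{+}$ and $D_{1+2k}^{-}$ produces several coincidences near $n=-1,0$ depending on which of $m, k$ are zero. Carefully enumerating these coincidences, matching two consecutive Taylor coefficients in $r$ as in Case I of Theorem \ref{when rep. has three comp, and mu is not 0, S_2 = 0}, and verifying that exactly the exceptional boundary coefficients $\alpha_{-1}$ or $\beta_{-1}$ (in the cases $k=0$ or $m=0$) are allowed to survive while all others vanish, is the most delicate part of the argument.
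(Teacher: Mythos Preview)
Your proposal is correct and follows essentially the same route as the paper: rotation analysis to get the shift structure and to pin down the target of each $S_i e_n^j$, then Algorithm~1 and Algorithm~2 at well chosen indices $(i,j)$ to kill the coefficients one by one, with the boundary case analysis split according to whether $m,k$ are $0$, $1$, or larger.

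One small correction to your narrative: the coupling $u_{-1}\alpha_{-n}=0$ (and dually $u_{-1}\beta_n=0$) does not arise because ``$T_2 e_{-1}^2=0$ removes a term''. In the paper the mechanism is that one first proves $\alpha_n=0$ for all $n\geq 0$; then, applying Algorithm~1 with $j=0$, every contribution from the terms $S_1\pi_2(\phi_a)$ and $S_1\pi_2(\phi_a)T_2$ that could land in $e_0^1$ would have to pass through some $\alpha_\ell$ with $\ell\geq 0$ (since $k\geq 1$), and these are already zero. What remains is the single term $\bar a\,T_1\pi_1(\phi_a)S_1 e_{-n}^2$, whose inner product with $e_0^1$ picks up exactly the weight $u_{-1}$ via $T_1^* e_0^1=\bar u_{-1}e_{-1}^1$. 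So the survival of $u_{-1}$ comes from the $T_1$ side together with the prior vanishing of the nonnegative $\alpha_n$, not from the vanishing of $v_{-1}$. Apart from this, your outline matches the paper's argument; the ``two consecutive Taylor coefficients'' trick from Theorem~\ref{when rep. has three comp, and mu is not 0, S_2 = 0} is in fact not needed here, since each application of the algorithm already isolates a single coefficient.
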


\begin{proof}(a) Homogeneity of $T$ implies that the operators $T_{i}$ and $S_{i}$ satisfy equations \eqref{eqn:2.1.17}, \eqref{eqn:2.1.18}, \eqref{eqn:2.1.19} and \eqref{eqn:2.1.20}.
Using the equations \eqref{eqn:2.1.17} and \eqref{eqn:2.1.18}, we find that $T_{1}$ and $T_2$ are weighted shifts with respect to the orthonormal basis $\{e_{n}^{1} \}$ and $\{e_n^2\},$ respectively. Let $\{u_{n}\}$ and $\{v_n\}$ be the weights of $T_{1}$ and $T_2,$ respectively. It is easy to see that that $v_{-1} = 0$ except when $m=0$ and $k=0$.

\noindent (b) From the equation \eqref{basic eq: 5},
it follows that there exists a sequence $\{\alpha_n\}$ such that
\begin{equation}\label{eqn:2.2.6}
S_1 e_n^2 = \alpha_n e_{n+m+1}^1, n \geq 0\,\,\mbox{and}\,\,S_1 e_{-n}^2 = \alpha_{-n} e_{-n-k+1}^1, n \geq 1.
\end{equation}
Applying \hyperlink{algorithm 1}{Algorithm $1$} for $i = n \geq 0$ and $j = 0,$ we obtain
$$\alpha_n \left\langle \pi_1(\phi_a) e_{n+m+1}^1, e_0^1 \right\rangle = 0.$$
In consequence, $\alpha_n = 0$ for all $n \geq 0.$
\begin{description}
\item[$\boldsymbol{k \geq 1}$] 
Applying \hyperlink{algorithm 1}{Algorithm $1$} for $i = -n,$ $n \geq 1,$ and $j = 0,$ we get
\begin{equation*}
\bar{a}\alpha_{-n}u_{-1} \left\langle \pi_1(\phi_a) e_{-n-k+1}^1, e_{-1}^1 \right\rangle = 0, 
\end{equation*}
which implies that $\alpha_{-n}u_{-1} = 0$ for all $n \geq 1$.

\item[$\boldsymbol{k = 0}$] 
Applying \hyperlink{algorithm 1}{Algorithm $1$} for $i = -1$ and $j = -n+1,$ $n \geq 1,$ we obtain
\begin{equation*}
\alpha_{-n} \left\langle \pi_2(\phi_a) e_{-1}^2, e_{-n}^2 \right\rangle + \alpha_{-1} \left\langle \pi_1(\phi_a) e_{0}^1, e_{-n+1}^1 \right\rangle = 0.
\end{equation*}
This implies that $\alpha_{-n} = 0$ for all $n \geq 2$.
\end{description}

\noindent (c) 
Equation \eqref{basic eq: 6}, in the Appendix, implies that
\begin{enumerate}
\item[(i)] for all $n, n \geq \mbox{max}\{m-1, 0\},$ there exist $\beta_n \in \mathbb{C}$ such that $S_2 e_n^1 = \beta_n e_{-m+n+1}^2$ and for all $n, 0 \leq n < \mbox{max}\{m-1, 0\},$ $S_2 e_n^1 = 0$,
\item[(ii)] for all $n, n \geq k+2,$ there exist $\beta_{-n} \in \mathbb{C}$ such that $S_2 e_{-n}^1 = \beta_{-n} e_{-n+k+1}^2$ and for all $n, \nolinebreak 2 \leq n < k+2,$ $S_2 e_{-n}^1 = 0$,
\item[(iii)] there exists $\beta_{-1} \in \mathbb{C}$ such that $S_2 e_{-1}^1 = \beta_{-1} e_{0}^2$ where $\beta_{-1} = 0$ if $m \neq 0.$
\end{enumerate}

Applying \hyperlink{algorithm 2}{Algorithm $2$} for $i = -1$ and $j = n+k+1,$ $n \geq k+2,$ we see that $\beta_{-n} = 0.$
Thus, we have $S_2e_{-n}^1 = 0$ for all $n \geq 2$.
\begin{description}
\item[$\boldsymbol{m > 1}$] 
Applying \hyperlink{algorithm 2}{Algorithm $2$} for $i = n \geq m-1$ and $j = 0,$ we obtain
\begin{equation*}
\beta_n \left\langle \pi_2(\phi_a) e_{-m+n+1}^2, e_0^2 \right\rangle = 0.
\end{equation*}
Thus, for $n \geq m-1$, $\beta_n = 0.$ Consequently, $S_2 = 0$.

\item[$\boldsymbol{m = 1}$] 
In this case also applying \hyperlink{algorithm 2}{Algorithm $2$}, for $i = -1$ and $j = n \geq 0,$ we obtain
\begin{equation*}
\bar{a}\beta_n u_{-1} \left\langle \pi_2(\phi_a) e_{0}^1, e_n^1 \right\rangle = 0.
\end{equation*}
Thus, for $n \geq 0$, $u_{-1}\beta_n = 0$.

\item[$\boldsymbol{m = 0}$] 
Again applying \hyperlink{algorithm 2}{Algorithm $2$}, for $i = n \geq -1$ and $j = 0,$ we obtain
\begin{equation*}
\beta_{-1} \left\langle \pi_1(\phi_a) e_n^1, e_{-1}^1 \right\rangle + \beta_n \left\langle \pi_2(\phi_a)e_{n+1}^2, e_0^2 \right\rangle = 0.
\end{equation*}
This implies that $\beta_n = 0,$ $n \geq 0.$
\end{description}\vskip -1.5em
\end{proof}

\begin{thm}\label{classification when pi_1 is P_1,0}
Suppose $T = \begin{bmatrix}
T_1 & S_1\\
S_2 & T_2
\end{bmatrix}$ is a homogeneous operator with associated representation $\pi = \pi_1 \oplus \pi_2,$ where $\pi_1 = P_{1, 0}$ and $\pi_2 = D_{1 + 2m}^+ \oplus D_{1 + 2k}^-$, $m, k \geq 0$. Then $T$ is reducible.
\end{thm}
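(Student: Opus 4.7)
The plan is to use the block structure provided by Lemma~\ref{Descriotion of S_1 and S_2 when pi_1 = P_1,0} and carry out a case analysis on the nonnegative integers $m$ and $k$ to exhibit a nontrivial reducing subspace of $T$ in each case. The natural candidates are the irreducible constituents $H^{(1+2m)}$ and $H^{(1+2k)}$ of $H_2$, sometimes enlarged by a piece of $L^2(\mathbb{T})$; the boundary data $u_{-1},v_{-1},\alpha_{-1},\beta_{-1}$ supplied by the lemma will be the crucial ingredients.

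First I would dispose of the case $m\ge 2$ (the case $k\ge 2$ being dual, via $T^*$ whose associated representation $\pi^{\#}=P_{1,0}\oplus D^{-}_{1+2m}\oplus D^{+}_{1+2k}$ matches the template of the theorem with the roles of $m$ and $k$ swapped). Here the lemma gives $S_2=0$ and $v_{-1}=0$ outright. Invariance of $H^{(1+2m)}=\overline{\mathrm{span}}\{e_n^2 : n\ge 0\}$ under $T$ follows because $S_1 e_n^2=0$ for all $n\ge 0$ and $T_2$ acts as a forward weighted shift on the positive-index vectors; invariance of the orthogonal complement $L^2(\mathbb{T})\oplus H^{(1+2k)}$ follows from $S_2=0$, $v_{-1}=0$, and $S_1 H^{(1+2k)}\subseteq L^2(\mathbb{T})$. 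Hence $H^{(1+2m)}$ is a nontrivial reducing subspace of $T$, and $T$ is reducible.

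The residual cases $(m,k)\in\{0,1\}^2$ I would treat by sub-dividing further on whether $u_{-1}=0$. When $u_{-1}\ne 0$, the lemma's relations $u_{-1}\alpha_{-n}=0$ (when $k\ge 1$) and $u_{-1}\beta_n=0$ (when $m=1$) collapse the off-diagonal blocks: this either kills $S_1$ and $S_2$ outright (yielding $T=T_1\oplus T_2$, reducible) or leaves at most one surviving rank-one contribution $\alpha_{-1}e_0^1$ or $\beta_{-1}e_0^2$ at the boundary, which can then be ruled out by plugging $e_{-1}^2$ (respectively $e_{-1}^1$) into equation~\eqref{eqn:2.1.18} (respectively~\eqref{eqn:2.1.19}), projecting onto $e_0^1$ (respectively $e_0^2$), and invoking the non-vanishing on $(0,1)$ of an appropriate sum of $P_{1,0}$ and discrete-series matrix coefficients, in the same spirit as the contradiction produced at the end of Lemma~\ref{b and m are 0}. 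When $u_{-1}=0$, $T_1$ respects the splitting $L^2(\mathbb{T})=H^2\oplus(H^2)^{\perp}$, and combined with $v_{-1}=0$ (still valid as long as $(m,k)\ne(0,0)$) and the lemma's structural restrictions on $S_1,S_2$, one checks that $H^2\oplus H^{(1+2m)}$ is $T$-invariant while its orthogonal complement $(H^2)^{\perp}\oplus H^{(1+2k)}$ is also $T$-invariant; this produces the desired reducing subspace.

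The main obstacle is the exceptional borderline case $(m,k)=(0,0)$, where the four boundary scalars $u_{-1},v_{-1},\alpha_{-1},\beta_{-1}$ may simultaneously be non-zero, so that none of the splittings above is visibly invariant. In this case I expect the argument to exploit the projective inequivalence of $P_{1,0}$ and $D_1^{+}\oplus D_1^{-}$ alluded to in Remark~\ref{Remark about redicibility of T when the rep is P_1,s + P_1,0}: substituting the unit vectors $e_{-1}^1$ and $e_{-1}^2$ into equations~\eqref{eqn:2.1.17}--\eqref{eqn:2.1.20} and comparing the resulting matrix-coefficient identities should force linear relations among the boundary scalars that make $H^2\oplus H^{(1)}$ (the ``holomorphic'' part of $\pi$) a genuine reducing subspace, completing the proof.
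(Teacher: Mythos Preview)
Your case analysis for $(m,k)\neq(0,0)$ follows the same spirit as the paper, which also dispatches those cases directly from Lemma~\ref{Descriotion of S_1 and S_2 when pi_1 = P_1,0}. A minor correction: the specific reducing subspace $H^2\oplus H^{(1+2m)}$ that you name in the $u_{-1}=0$ branch does not always work. For instance, when $(m,k)=(1,0)$, $u_{-1}=0$, and $\alpha_{-1}\ne0$, the adjoint $S_1^{*}$ sends $e_0^1$ to $\bar{\alpha}_{-1}e_{-1}^2$, which escapes $H^2\oplus H^{(3)}$. The fix is easy: in such sub-cases a smaller subspace, e.g.\ $(H^2)^{\perp}$ alone or $H^2$ alone, is already reducing, so the conclusion survives.

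The genuine gap is the borderline case $(m,k)=(0,0)$. Your plan there is only a hope: you say that matrix-coefficient identities ``should force linear relations among the boundary scalars'' so that $H^2\oplus H^{(1)}$ becomes reducing, but making that subspace reducing requires \emph{all four} scalars $u_{-1},v_{-1},\alpha_{-1},\beta_{-1}$ to vanish, and you give no mechanism for that. The paper's argument is quite different and avoids this computation entirely. It observes that $\tilde{H}_1=\overline{\mathrm{span}}\{e_n^1,e_n^2:n\ge0\}$ is invariant under \emph{both} $T$ and $\pi$ (though not a priori reducing for $T$). By \cite[Proposition~2.4]{CONSTCharGMBB} the compression $A=T|_{\tilde{H}_1}$ is itself homogeneous, with associated representation $\pi|_{\tilde{H}_1}\cong D_1^{+}\oplus D_1^{+}$. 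Since the off-diagonal pieces of $A$ vanish (Lemma~\ref{Descriotion of S_1 and S_2 when pi_1 = P_1,0} gives $S_1e_n^2=S_2e_n^1=0$ for $n\ge0$), the diagonal blocks $T_1|_{H^2}$ and $T_2|_{H^{(1)}}$ are each homogeneous with associated representation $D_1^{+}$, which from the known classification forces $u_n=v_n=1$ for all $n\ge0$; the same reasoning on $\tilde{H}_2$ gives $u_n=v_n=1$ for $n\le-2$. With all the $2\times2$ blocks $T_n$ equal to the identity except $T_{-1}$, reducibility of $T$ then follows. The step you are missing is precisely this passage through \cite[Proposition~2.4]{CONSTCharGMBB}: using simultaneous $T$- and $\pi$-invariance of $\tilde{H}_1$ to pin down the weights, rather than attacking the boundary scalars directly.
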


\begin{proof}
By virtue of Lemma \ref{Descriotion of S_1 and S_2 when pi_1 = P_1,0}, it is easy to see that if either $k \neq 0$ or $m \neq 0,$ then $T$ is reducible. Thus to complete the proof, we have to show that $T$ is reducible only when $m = k = 0.$
It follows from Lemma \ref{Descriotion of S_1 and S_2 when pi_1 = P_1,0}(a)  that the operators $T_{1}$ and $T_2$ are weighted shifts with respect to the orthonormal basis $\{e_{n}^{1} \}$ and $\{e_n^2\},$ respectively. Let  $\{u_{n}\}$ and $\{v_n\}$ be the corresponding weights. From Lemma \ref{Descriotion of S_1 and S_2 when pi_1 = P_1,0}(b), we see that for $n \neq -1,$ $S_1 e_{n}^2 = 0$ and $S_2 e_{n}^1 = 0$. Clearly, $\tilde{H_1}$ is invariant under $T$. Let $A := T_{|\tilde{H_1}}$ and $B := PT_{|\tilde{H_2}},$ where  $\tilde{H_2}$  is defined in \eqref{eqn:2.31a} and $P$ is the projection of $H$ onto $\tilde{H_2}$. Since $\tilde{H_1}$ and $\tilde{H_2}$ are invariant under $\pi,$ it follows from \cite[Proposition 2.4]{CONSTCharGMBB} that $A$ and $B$ are homogeneous operators with associated representations $\pi_{|\tilde{H_1}}$ and $\pi_{|\tilde{H_2}},$ respectively. Since $\pi_{|\tilde{H_1}}$ is equivalent to $D_{1}^+ \oplus D_1^+$ and $S_1 e_{n}^2 = 0,$ $S_2 e_{n}^1 = 0$ for all $n \geq 0,$ it follows, using homogeneity of $A$, that $u_n = 1,$ $v_n = 1$ for all $n \geq 0$. Similarly, it follows that $u_n = 1,$ $v_n = 1$ for all $n \leq -2.$
Therefore $T$ must be reducible. 
This completes the proof since we have shown that the operator $T$ is reducible in every possible combination of the associated representation. 
\end{proof}
Since $P_{1,0}$ is not equivalent to the direct sum $ $ as explained in Remark \ref{Remark about redicibility of T when the rep is P_1,s + P_1,0}, the case where the associated representation is $\pi = P_{1,0} \oplus P_{1,0}$ has to be settled separately, which is given in the Theorem below. The proof requires no new idea and is omitted. 
\begin{thm}\label{homog op with associated rep P_10 + P_10}
Suppose $T$ is a homogeneous operator on $L^2(\mathbb{T}) \oplus L^2(\mathbb{T})$ with associated representation $\pi = P_{1,0} \oplus P_{1,0}$. Then $T$ is reducible.
\end{thm}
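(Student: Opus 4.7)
The plan is to adapt the machinery of Corollary \ref{all homog op for principal series case when s = 0} and Theorem \ref{exsistence of simultaneous invariant subspace of T and sigma} to the degenerate case $\lambda = 1$. The driving observation is that the perturbing operator $S[\lambda]$ which parametrizes the non-trivial upper-triangular family in Corollary \ref{all homog op for principal series case when s = 0} has weights $\tfrac{1}{\lambda + 2n + 1}$ that blow up at $n = -1$ when $\lambda = 1$. Hence the only bounded choice for the off-diagonal block will be zero, collapsing the candidate family to the direct sum $B \oplus B$, which is manifestly reducible.

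First I would verify that Theorem \ref{exsistence of simultaneous invariant subspace of T and sigma} carries over to $\sigma = P_{1,0} \oplus P_{1,0}$. That proof depends on three ingredients: (i) the countability of the zero sets $A_{m,n}$ of the matrix coefficients of $P_{\lambda, 0}$ (Definition \ref{zero set of matrix coefficient}), (ii) the invertibility of each block $T_n$ (Lemma \ref{invertibility of blocks}), and (iii) an induction based on expanding the homogeneity equation in powers of $r = |a|^2$. None of these uses $\lambda \neq 1$ in an essential way---the matrix coefficients of $P_{1,0}$ remain non-trivial power series, and the invertibility argument is insensitive to the value of $\lambda$. Consequently I would obtain a decomposition $L^2(\mathbb{T}) \oplus L^2(\mathbb{T}) = H_1 \oplus H_2$ with $T(H_1) \subseteq H_1$, where each $H_i$ is $\sigma$-invariant and $\sigma|_{H_i}$ is equivalent to $P_{1,0}$.

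Writing $T = \left[\begin{smallmatrix} T_1 & S_1 \\ 0 & T_2 \end{smallmatrix}\right]$ with respect to this decomposition, I would invoke \cite[Proposition 2.4]{CONSTCharGMBB} to conclude that $T_1$ and $T_2$ are both homogeneous with associated representation $P_{1,0}$. Since the unweighted bilateral shift $B$ is the unique (up to unitary equivalence) homogeneous operator with associated representation $P_{1,0}$, both $T_1$ and $T_2$ are unitarily equivalent to $B$. The remaining task is to show $S_1 = 0$. The operator $S_1$ satisfies an intertwining equation of the form \eqref{eqn:2.1.6} with $\lambda = 1$ and $s = 0$. Following the proof of Proposition \ref{Finding S when s=0}(b), the weights $\{\alpha_n\}$ of $S_1$ satisfy the recursion $2n\,\alpha_{n-1} = 2(n+1)\alpha_n$; substituting $n = 0$ forces $\alpha_0 = 0$, and propagating in both directions gives $\alpha_n = 0$ for every $n \neq -1$.

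The residual weight $\alpha_{-1}$ is not constrained by this single recursion (both sides are identically zero at $n = 0$), and this is the main obstacle of the proof. To eliminate $\alpha_{-1}$ I would compare additional coefficients in equation \eqref{basic eq: 4} of the Appendix---arguing exactly as in the boundary analyses of Theorem \ref{when rep. has three comp, and mu is not 0, S_2 = 0} and Lemma \ref{Descriotion of S_1 and S_2 when pi_1 = P_1,0}---using the explicit form of $\langle P_{1,0}(\phi_a) z^{-1}, z^0\rangle$ to show that a rank-one candidate with $\alpha_{-1} \neq 0$ violates the full homogeneity relation. The difficulty is precisely that the natural family $\alpha S[\lambda]$ becomes ill-defined in the limit $\lambda \to 1$, so the rank-one boundary correction could a priori survive as a genuinely new degenerate solution; only a careful boundary computation rules it out. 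Once $S_1 = 0$ is established, $T \cong B \oplus B$ with respect to $H_1 \oplus H_2$ and reducibility of $T$ follows immediately.
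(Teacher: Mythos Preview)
Your argument has a genuine gap at the step where you claim that Theorem \ref{exsistence of simultaneous invariant subspace of T and sigma} carries over to $\lambda=1$. The entire Section~4 machinery rests on the existence of $b\in(0,1)$ with $\langle P_{\lambda,0}(\phi_b)z^m,z^n\rangle\neq 0$ for \emph{all} $m,n\in\mathbb Z$ (Definition~\ref{zero set of matrix coefficient} and the choice of $b$ immediately following it). When $\lambda=1$ one has $\mu=0$, and a direct computation gives, for instance,
\[
C_k(-1,0)=\binom{0}{k+1}\binom{-1}{k}=0\quad\text{for every }k\ge 0,
\]
so $\langle P_{1,0}(\phi_a)z^{-1},z^{0}\rangle\equiv 0$ for all $a$. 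This is not an accident: $P_{1,0}$ is reducible (it splits into a $D_1^+$-piece and a $D_1^-$-like piece, cf.\ Remark~\ref{Remark about redicibility of T when the rep is P_1,s + P_1,0}), and cross matrix coefficients between the two invariant summands vanish identically. Consequently $\sigma_{0,-1}(\phi_b)$ is the zero map, Lemma~\ref{invertibility of blocks} fails at $n=-1$ (the step ``$b\,\sigma_{n+1,n}(\phi_b)h_n=0\Rightarrow h_n=0$'' is vacuous there), and the induction in Theorem~\ref{exsistence of simultaneous invariant subspace of T and sigma} cannot cross from $n=-1$ to $n=0$. In particular there is no reason for the blocks $T_n$ to be invertible, and indeed the analysis in Lemma~\ref{Descriotion of S_1 and S_2 when pi_1 = P_1,0}(a) shows the analogue of $T_{-1}$ may well have a kernel. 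Your assertion that ``the matrix coefficients of $P_{1,0}$ remain non-trivial power series'' is therefore false exactly at the indices that matter.

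The paper's intended route is different and is signalled by the placement of the theorem in Section~6 rather than Section~4: one exploits the reducibility of $P_{1,0}$, writes $P_{1,0}\oplus P_{1,0}$ as a sum of four irreducible pieces, and argues as in Lemmas~\ref{lamda is not 1}--\ref{a,b,m and p are 0} and Theorem~\ref{classification when pi_1 is P_1,0}. Your boundary analysis of the single surviving weight $\alpha_{-1}$ is in the right spirit, but it should be carried out in the Section~6 framework (splitting along $\{n\ge 0\}$ versus $\{n\le -1\}$) rather than by trying to push the Section~4 upper-triangularization through a point where its hypotheses fail.
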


\section{Conclusion}
We have proved, in Section 5 and Section 6, that if the associated representation of a homogeneous bi-lateral $2$-shift $T$ is a direct sum of either three irreducible  or four irreducible representations,
then the operator $T$ must be reducible. Combining this with the analysis, in Section 4, of the remaining case, where the associated representation is the direct sum of two irreducible Continuous series representations, we obtain the proof of our main theorem stated below. 

\begin{mainT}
$(a)$ The irreducible homogeneous bi-lateral $2$-shifts in  $\mathcal{C}$ (respectively,  in $\mathcal{P}$ and $\mathcal{P}_0$)  are mutually inequivalent.

$(b)$ The three classes of irreducible homogeneous bi-lateral $2$-shifts $\mathcal{C},$  $\mathcal{P}$ and $\mathcal{P}_0$ are mutually inequivalent.

$(c)$ Let $T$ be an irreducible homogeneous bi-lateral $2$-shift. Then, up to unitary equivalence, $T$ is in either $\mathcal{C}$ or $\mathcal{P}$ or $\mathcal{P}_0.$
\end{mainT}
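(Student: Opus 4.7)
The plan is to assemble the Main Theorem directly from the classification results already built up in Sections 3--6; no fundamentally new technique is required and the work is essentially bookkeeping. The organising principle is the trichotomy in Theorem \ref{description of the rep for a 2 shift}, which says that the associated representation $\pi$ of any irreducible homogeneous $2$-shift $T$ splits into exactly two, three, or four irreducible summands. This trichotomy will drive the proof of part (c), and parts (a) and (b) are quick consequences of the per-family results already established.

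For part (a), I would simply invoke the within-class inequivalence statements that have already been proved: \cite[Theorem 1.2]{HomogKoranyi} for $\mathcal{C}$, Theorem \ref{inequivalance within principal series rep} for $\mathcal{P}$, and Theorem \ref{inequivalance between the class of homog op with associated rep P_lambda, s and P_lambda, 0}(a) for $\mathcal{P}_0$. For part (b), the pairwise between-class inequivalence follows from Theorem \ref{Inequivalance between complementary series and principal series} (handling $\mathcal{C}$ versus $\mathcal{P}$) together with Theorem \ref{inequivalance between the class of homog op with associated rep P_lambda, s and P_lambda, 0}(b),(c) (handling $\mathcal{C}$ versus $\mathcal{P}_0$ and $\mathcal{P}$ versus $\mathcal{P}_0$). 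The common mechanism is that an irreducible homogeneous operator determines its associated representation uniquely up to equivalence, so operators whose associated representations live in different classes of the list of irreducible projective representations of M\"ob cannot be unitarily equivalent.

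For part (c), which is the substantive content, I would first eliminate the three- and four-summand cases: the work of Section 6 (culminating in the enumeration at the end of that section, together with Theorem \ref{classification when pi_1 is P_1,0} and Theorem \ref{homog op with associated rep P_10 + P_10}) shows that every $T$ with a four-summand associated representation is reducible, and the work of Section 5 (Theorems \ref{when rep. has three comp, S_1 = 0} and \ref{when rep. has three comp, and mu is not 0, S_2 = 0}, together with Remark \ref{Remark about redicibility of T when the rep is P_1,s + P_1,0}) does the same for the three-summand case. Since $T$ is assumed irreducible, $\pi$ must therefore be a direct sum of exactly two irreducible Continuous series representations. Theorem \ref{homog op when associated rep is direct sum of cont. series rep} then rules out the possibility that the two summands are inequivalent, apart from the pair $P_{\lambda,s}$ and $P_{\lambda,-s}$, which after conjugation by $I \oplus U_{\lambda,s}$ is absorbed into the diagonal case. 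This leaves the three scenarios $C_{\lambda,\sigma} \oplus C_{\lambda,\sigma}$, $P_{\lambda,s} \oplus P_{\lambda,s}$ with $s \neq 0$, and $P_{\lambda,0} \oplus P_{\lambda,0}$ with $\lambda \neq 1$, which respectively by Kor\'anyi's result and Corollaries \ref{all homog op for principal series case when s not 0} and \ref{all homog op for principal series case when s = 0} place $T$, up to unitary equivalence, into $\mathcal{C}$, $\mathcal{P}$, or $\mathcal{P}_0$.

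I do not expect any single step to present a genuine obstacle, since all the technical content has already been isolated in the preceding sections; the only real care needed is to ensure the enumeration in part (c) is complete, in particular handling the two Principal series subtleties: the exceptional case $P_{1,0}$ (which is reducible but not equivalent to $D_{1}^{+} \oplus D_{1}^{-}$, so it is not covered by the four-summand analysis directly) and the symmetry $s \mapsto -s$ that identifies the two Principal series parametrisations. Both have been addressed already, by Remark \ref{Remark about redicibility of T when the rep is P_1,s + P_1,0} together with Theorem \ref{homog op with associated rep P_10 + P_10} for the former and by the $U_{\lambda,s}$-intertwining argument recorded just before the definition of $\mathcal{P}$ for the latter, so the final write-up amounts to quoting these results in the correct order.
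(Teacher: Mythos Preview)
Your proposal is correct and follows essentially the same approach as the paper: the paper's own proof (in the Conclusion section) is nothing more than a one-paragraph summary pointing to Sections 4--6, and your write-up is a more detailed version of precisely that bookkeeping. The only small point you might make explicit is why, in part (c), the two-summand case of Theorem \ref{description of the rep for a 2 shift} forces both summands to be Continuous series rather than both holomorphic (or both anti-holomorphic) Discrete series; this follows immediately from the \emph{bilateral} hypothesis on $T$, since a direct sum of two holomorphic Discrete series has $I(\pi)$ bounded below and would make $T$ a forward shift, but the paper itself leaves this implicit as well.
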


\appendix{\section{Computations and an algorithm}}
Let $\pi_1$ and $\pi_2$ be two projective representation of M\"{o}b on the Hilbert spaces $H_1$ and $H_2,$ respectively, such that one of the following holds:
\begin{itemize}
\item[\bf{(I)}] $\pi_1$ and $\pi_2$ are from the irreducible Continuous series representation.

\item[\bf{(II)}] $\pi_{1}$ is from the Continuous series representations and $\pi_{2} = D_{\lambda + 2m}^{+} \oplus D_{2 - \lambda + 2k}^{-},$ $-1 < \lambda \leq 1;$ $m, k$ are integers.

\item[\bf{(III)}] $\pi_{1} = D_{\lambda + 2a}^{+} \oplus D_{2-\lambda + 2b}^{-}$ and $\pi_{2} = D_{\lambda + 2m}^{+} \oplus D_{2-\lambda + 2p}^{-}$ where $\lambda \in (0, 2]$ and $a, b, m, p$ are any non-negative integers. 
\end{itemize}
Suppose $T_1$ and $T_2$ are bounded operators on $H_1$ and $H_2$, respectively and $S_1 : H_2 \to H_1$ and $S_2 : H_1 \to H_2$ be operators which satisfies the following relations 
\begin{equation}\label{basic eq: 1}
S_{1}\pi_{2}(\phi) - e^{i \theta} \pi_{1}(\phi) S_{1} = \overline{a}T_{1}\pi_{1}(\phi)S_{1} + \overline{a} S_{1} \pi_{2}(\phi) T_{2},\,\,\phi \in \mbox{M\"{o}b}
\end{equation}
and
\begin{equation}\label{basic eq: 2}
S_{2}\pi_{1}(\phi) - e^{i \theta} \pi_{2}(\phi) S_{2} = \overline{a}S_{2}\pi_{1}(\phi)T_{1} + \overline{a} T_{2} \pi_{2}(\phi) S_{2},\,\,\phi \in \mbox{M\"{o}b}.
\end{equation}
\subsection*{(I)} We know that $\{z^{n} : n \in \mathbb{Z}\}$ is an orthogonal basis of $H_{i}$. Let $e_n^i = \frac{z^n}{\|z^n\|_i},\,\,i = 1, 2,$ where $\| \cdot\|_i$ denote the inner product of $H_i.$ The set of vectors $\{e_n^i : n \in \mathbb{Z}\}$ is an orthonormal basis of $H_i.$ Let $\phi_{\theta} \in$ M\"{o}b
be such that $\phi_{\theta}(z) = e^{i \theta}z$. Evaluating equation \eqref{basic eq: 1} on $z^n$ and putting $\phi=\phi_{\theta},$ we obtain 

\begin{equation}\label{basic eq: 3}
\pi_{1}(\phi_{\theta}) S_{1}z^{n} = e^{-i\left(n+1+\frac{\lambda_{2}}{2}\right) \theta} S_{1} z^{n},\,\,n \in \mathbb{Z}.
\end{equation}
Thus the existence of a sequence $\{\alpha_{n} : n \in \mathbb{Z} \}$ such $S_{1} e_{n}^{2} = \alpha_{n} e_{n+1}^{1}$ follows.
Suppose $T_i$'s are weighted shift with respect to the orthonormal basis $\{e_n^i : n \in \mathbb{Z}\}$. Then evaluating equation \eqref{basic eq: 1} on the vector $e_{m}^{2},$ putting $\phi=\phi_{a},$ taking inner product with $e_n^1$ and finally using the matrix coefficient of $\pi_i(\phi_a)$ (see \cite[p. 316]{Homogshift}), we obtain
\begin{multline}\label{basic eq: 4}
\alpha_{n-1} \frac{\|z^{n-1}\|_{2}}{\|z^{m}\|_{2}} |\phi'_{a}(0)|^{\mu_{2}} \displaystyle \sum_{k \geq (m-n+1)^{+}} C_{k}^{2}(m, n-1)r^{k} - \alpha_{m} \frac{\|z^{n}\|_{1}}{\|z^{m+1}\|_{1}} |\phi'_{a}(0)|^{\mu_{1}} \displaystyle \sum_{k \geq (m-n+1)^{+}} C_{k}^{1}(m+1, n)r^{k} \\
= v_{m} \alpha_{n-1} \frac{\|z^{n-1}\|_{2}}{\|z^{m+1}\|_{2}} |\phi'_{a}(0)|^{\mu_{2}} \displaystyle \sum_{k \geq (m-n+2)^{+}} C_{k}^{2}(m+1, n-1)r^{k} \\
 + \alpha_{m} u_{n-1} \frac{\|z^{n-1}\|_{1}}{\|z^{m+1}\|_{1}} |\phi'_{a}(0)|^{\mu_{1}} \displaystyle \sum_{k \geq (m-n+2)^{+}} C_{k}^{1}(m+1, n-1)r^{k},
\end{multline}
where $C_{k}^i(m, n) = \left(\begin{array}{cc}
 -\lambda - \mu_i - m\\
 k + n -m
\end{array}\right)\left(\begin{array}{cc}
 - \mu_i + m\\
 k 
\end{array}\right),$ $i = 1, 2$ and $u_n, v_n$ are weights of $T_1, T_2,$ respectively.
Similar conclusions are true for $S_2$ as well. When $\pi_1 = \pi_2,$ we denote $C_{k}^i$ by $C_k.$

\subsection*{(II)} Let $H^{(\lambda + 2m)}$ be the representation space of $D_{\lambda + 2m}^{+}$ and $H^{(2-\lambda + 2k)}$ be the representation space of $D_{2-\lambda + 2k}^{-}$. Let $H_{2} = H^{(\lambda + 2m)} \oplus H^{(2-\lambda + 2k)}$. Define 
\begin{equation}\label{eq: onb}
e_{n}^{2} := \left(\begin{array}{c}
    \frac{z^{n}}{\|z^{n}\|_{\lambda + 2m}}\\
     0
\end{array}\right),\,\, n \geq 0\,\,\mbox{and}\,\,e_{-n}^{2} := \left(\begin{array}{c}
    0\\
   \frac{z^{n-1}}{\|z^{n-1}\|_{2-\lambda + 2k}}
\end{array}\right),\,\,n \geq 1.
\end{equation}
The set of vectors $\{e_{n}^{2} : n \in \mathbb{Z} \}$ is an orthonormal basis of $H_{2}$. Let $\phi_{\theta}$ be a rotation in M\"{o}b. Then 
$$\pi_{2}(\phi_{\theta}) e_{n}^{2} = e^{-i \left( n + m + \frac{\lambda}{2} \right) \theta}e_{n}^{2},\,n \geq 0\,\,\mbox{and}\,\,\pi_{2}(\phi_{\theta}) e_{-n}^{2} = e^{i \left( n + k - \frac{\lambda}{2} \right) \theta}e_{-n}^{2},\,n \geq 1.$$
Substituting $\phi=\phi_{\theta}$ in the equation \eqref{basic eq: 1} and \eqref{basic eq: 2}, respectively, we obtain  
\begin{equation}\label{basic eq: 5}
\pi_{1}(\phi_{\theta})Se_{n}^{2} = e^{-i \left( n + 1 + m + \frac{\lambda}{2} \right) \theta} Se_{n}^{2},\,\, n \geq 0;\,\, \pi_{1}(\phi_{\theta})Se_{-n}^{2} = e^{i \left( n - 1 + k - \frac{\lambda}{2} \right) \theta} Se_{-n}^{2},\,\, n\geq 1
\end{equation}
and
\begin{equation}\label{basic eq: 6}
\pi_{2}(\phi_{\theta}) S_{2} e_{n}^{1} = e^{-i \left(n + 1 + \frac{\lambda}{2} \right) \theta} S_{2}e_{n}^{1},\,\,n \in \mathbb{Z}. 
\end{equation}

\subsection*{(III)} Substituting $\phi=\phi_{\theta}$ in equation \eqref{basic eq: 1} and \eqref{basic eq: 2}, respectively, we obtain
\begin{equation}\label{basic eq: 7}
\pi_{1}(\phi_{\theta})S_{1}e_{n}^{2} = e^{-i \left(n + 1 + m + \frac{\lambda}{2} \right) \theta} S_{1} e_{n}^{2},n \geq 0;\,\,\,
\pi_{1}(\phi_{\theta})S_{1}e_{-n}^{2} = e^{i \left(n - 1 + p - \frac{\lambda}{2} \right) \theta} S_{1} e_{-n}^{2}, n \geq 1
\end{equation}
and 
\begin{equation}\label{basic eq: 8}
\pi_{2}(\phi_{\theta}) S_{2} e_{n}^{1} = e^{-i \left(n + 1 + a + \frac{\lambda}{2} \right) \theta} S_{2} e_{n}^{1}, n \geq 0;\,\,\,\pi_{2}(\phi_{\theta}) S_{2} e_{-n}^{1} = e^{i \left(n - 1 + b - \frac{\lambda}{2} \right) \theta} S_{2} e_{-n}^{1}, n \geq 1.
\end{equation}
where $e_n^1$ and $e_n^2$ are defined in a similar way as in \eqref{eq: onb}.

The following two algorithms have been used in section $5$ and $6$ repeatedly:
\subsection*{\texorpdfstring{\protect\hypertarget{algorithm 1}{Algorithm 1}}{}}
Substitute $\phi = \phi_a$ in the equation \eqref{basic eq: 1}, evaluate at the vector $e_i^2$ and take inner product with the vector $e_j^1.$
\subsection*{\texorpdfstring{\protect\hypertarget{algorithm 2}{Algorithm 2}}{}}
Substitute $\phi = \phi_a$ in the equation \eqref{basic eq: 2}, evaluate at the vector $e_i^1$ and take inner product with the vector $e_j^2.$

\medskip \textit{Acknowledgments}.
The author would like to express his sincere gratitude to G. Misra for his patient guidance and suggestions in the preparation of this paper. 

\end{document}